\numberwithin{equation}{section}
\newtheorem{theorem}{Theorem}[section]
\newtheorem{proposition}{Proposition}[section]
\newtheorem{corollary}{Corollary}[section]
\newtheorem{lemma}{Lemma}[section]
\definecolor{darkred}{rgb}{0.8,0,0}
\definecolor{darkblue}{rgb}{0,0,0.7}
\definecolor{darkgreen}{rgb}{0,0.4,0}
\newcommand{\ee}{{\rm e}}
\newcommand{\eps}{\varepsilon}
\newcommand{\id}{{\rm id}}
\newcommand{\dd}{\;{\rm d}}
\newcommand{\RR}{{\mathbb R}}
\newcommand{\R}{{\mathbb R}}
\newcommand{\NN}{{\mathbb N}}
\newcommand{\LL}{{\rm L}}
\renewcommand{\L}{{\rm L}}
\newcommand{\HH}{{\rm H}}
\renewcommand{\H}{{\rm H}}
\newcommand{\PP}{{\mathcal P}}
\newcommand{\CC}{{\mathcal C}}
\newcommand{\C}{{\mathcal C}}
\newcommand{\FF}{{\mathcal F}}
\newcommand{\EE}{{\mathcal E}}
\newcommand{\KK}{{\mathcal K}}
\newcommand{\TT}{{\mathcal T}}
\newcommand{\DD}{{\mathcal D}}
\newcommand{\V}{{\mathcal V}}
\renewcommand{\ln}{\log}
\newcommand{\norm}[1]{\left \| #1 \right \|}
\newcommand{\un}{{\rm 1\kern -2.5pt l}}
\begin{document}
%%%%%%%%%%%%%%%%%%%%%%%%%%%%%%%%%%%%%%%%%%%%%%%%%%%%%%%%%%%%%%%%%%%%%%
%%%%%%%%%%%%%%%%%%%%%%%%%%%%%%%%%%%%%%%%%%%%%%%%%%%%%%%%%%%%%%%%%%%%%%
\title{A hybrid variational principle for the Keller-Segel system in $\R^2$}
\date{\today}
\keywords{chemotaxis; Keller-Segel model; minimising scheme; Kantorovich-Rubinstein-Wasserstein distance, Wasserstein distance}
\subjclass[2000]{35K65, 35K40, 47J30, 35Q92, 35B33}
%%%%%%%%%%%%%%%%%%%%%%%%%%%%%%%%%%%%%%%%%%%%%%%%%%%%%%%%%%%%%%%%%%%%%%%%%%%%%%%
%%%%%%%%%%%%%%%%%%%%%%%%%%%%%%%%%%%%%%%%%%%%%%%%%%%%%%%%%%%%%%%%%%%%%%%%%%%%%%%
\begin{abstract}
We construct weak global in time solutions to the classical Keller-Segel system  cell movement by chemotaxis in two dimensions when the total mass is below the well-known critical value. Our construction takes advantage of the fact that the Keller-Segel system can be realized as a gradient flow in a suitable functional product space. This allows us to  employ a hybrid variational principle which is a generalisation of the minimising implicit scheme for Wasserstein distances introduced by Jordan, Kinderlehrer and Otto (1998).
\end{abstract}
\author{Adrien Blanchet}
\address{Adrien Blanchet -- TSE (GREMAQ, Universit\'e Toulouse 1 Capitole), 21 All\'ee de Brienne, F-31015 Toulouse cedex 6}
\email{Adrien.Blanchet@ut-capitole.fr}

\author{Jos\'e Antonio Carrillo}
\address{Jos\'e Antonio Carrillo -- Department of Mathematics, Imperial College London, London SW7 2AZ, UK}
\email{carrillo@imperial.ac.uk}

\author{David Kinderlehrer}
\address{David Kinderlehrer -- Department of Mathematical Sciences, Carnegie Mellon University, Pittsburgh, PA 15213, USA}
\email{davidk@andrew.cmu.edu}

\author{Micha{\l } Kowalczyk}
\address{Micha{\l } Kowalczyk -- Departamento de Ingenier\'{\i}a Matem\'atica and Centro
de Modelamiento Matem\'atico (UMI 2807 CNRS), Universidad de Chile, Casilla
170 Correo 3, Santiago, Chile.}
\email {kowalczy@dim.uchile.cl}

\author{Philippe Lauren{\c{c}}ot}
\address{Philippe Lauren{\c{c}}ot -- Institut de Math\'ematiques de Toulouse, UMR~5219, Universit\'e de Toulouse, CNRS, F--31062 Toulouse Cedex 9, France}
\email{Philippe.Laurencot@math.univ-toulouse.fr}

\author{Stefano Lisini}
\address{Stefano Lisini -- Universit\`a degli Studi di Pavia, Dipartimento di Matematica "F. Casorati", via Ferrata 1, I-27100 Pavia, Italy}
\email{stefano.lisini@unipv.it}
\pagestyle{plain}

%\thanks{D. Kinderlehrer is partially supported by NSF DMS 0806703, DMS 0635983 and OISE 0967140. M. Kowalczyk is partially supported by Chilean research grants Fondecyt 1130126,  Fondo Basal CMM-Chile and Ecos-C11E07, and part of this work was done during his stay in the Universit\'e Toulouse 1 Capitole.}

\maketitle
\tableofcontents
%\newpage
\section{Introduction}
\subsection{The model}
The parabolic-parabolic Keller-Segel model \cite{Keller-Segel-70, KS2} is a drift-diffusion system given by
\begin{equation}\label{ks original}
\left\{
  \begin{array}{l}
{\partial_t u}=  \Delta u -\chi_0 {\rm div} \left[   u\nabla v\right]\,, \vspace{.2cm}\\
\tau_0 {\partial _tv } = D_0\Delta v - \alpha_0\, v + \beta_0\, u\,,\vspace{.2cm}\\
u_0 \in \LL^1_+(\RR^2)\, ,\quad v_0 \in \HH^1(\RR^2),
  \end{array}
\quad (t,x) \in (0,\infty) \times \RR^2\,,
\right.
\end{equation}
where $\chi_0$, $\tau_0$, $D_0$, $\alpha_0$, and $\beta_0$ are given positive parameters and $\LL^1_+(\R^2)$ denotes the positive cone of $\LL^1(\R^2)$. The system (\ref{ks original}) is a widely accepted model of chemotaxis, a phenomenon in which organisms, most notably dictyostelium discoideum, with density $u$, are attracted by a chemo-attractant $v$, produced by them. This feedback mechanism may lead to an aggregation phenomena expressed by the concentration of the distribution function $u$ at some points and it may even grow  without bounds as time progresses leading to blow-up in density. The Keller-Segel model, which looks simple at first sight, is a very rich mathematical system and it has been an object of very extensive investigation for the last  forty years. By introducing the new unknown functions
\begin{equation*}
  \rho(t,x):=\frac{u(t,x)}{\|u_0\|_{1}}\,, \quad \phi(t,x):=\frac{D_0}{\beta_0\|u_0\|_{1}} v(t,x)\,,
\end{equation*}
and the two initial data
\begin{equation*}
  \rho_0:=\frac{u_0}{\|u_0\|_{1}}, \quad \phi_0:=\frac{D_0}{\beta_0\|u_0\|_{1}} v_0,
\end{equation*}
where $\|\cdot\|_1$ denotes the $\L^1$-norm, we obtain the equivalent system
\begin{equation}\label{num2}
\left\{
  \begin{array}{l}
{\partial_t\rho}=  \Delta \rho -\chi {\rm div} \left[   \rho\nabla \phi\right]\,, \vspace{.2cm}\\
\tau {\partial_t \phi} = \Delta \phi - \alpha\, \phi +  \rho\,,\vspace{.2cm}\\
\rho_0 \in \LL^1_+(\RR^2) \,, \quad \phi_0 \in \HH^1(\RR^2)\,,
  \end{array}
\quad (t,x) \in (0,\infty) \times \RR^2\,,
\right.
\end{equation}
with
\begin{equation*}
  \tau:=\frac{\tau_0}{D_0}\,,\quad \alpha:=\frac{\alpha_0}{D_0}\,,\quad\mbox{and}\quad \chi:=\frac{\beta_0\,\chi_0\|u_0\|_{1}}{D_0}\;.
\end{equation*}
Note that with this rescaling $\rho_0$ is a probability density.
It is immediate to notice that the total mass of $\rho$ is formally preserved along the flow,
\begin{equation*}
\int_{\mathbb R^2}\rho(t,x)\dd x=\int_{\mathbb R^2}\rho_0(x)\dd x=1,\qquad t\ge 0
\end{equation*}
and that $\rho(t,\cdot)\geq 0$ if $\rho_0\geq 0$. Thus we can reduce to construct solutions such that $\rho(t,\cdot)$ is a probability density for every $t>0$. We stress that the solutions obtained with our technique automatically enjoy this property.

Taking $\tau=0$ we obtain the so called parabolic-elliptic Keller-Segel model. Although our focus here is the case $\tau>0$, it is instructive to revise some basic facts about this ``simplified'' system. Taking the initial condition $\rho_0$ such that the second moment
\[
\int_{\R^2} |x|^2 \rho_0(x)\dd x<\infty,
\]
and calculating formally the time derivative of the 2-moment $\mathcal M_2(t)=\int_{\R^2} |x|^2 \rho(t,x)\dd x$ we obtain ${\dd}\mathcal M_2(t)/{\dd t} <0$ provided that $\chi>8\pi$. This means that at some finite time $T>0$,  $\mathcal M_2(T)= 0$ which would imply total concentration of the mass. The conclusion is that, for $\chi>8\pi$, there is finite time blow-up of classical solutions for the  parabolic-elliptic  Keller-Segel model \cite{bilerhilhorst}.  It turns out that when $\chi<8\pi$ solutions exist and are bounded for all times \cite{MR2226917}. The borderline case $\chi=8 \pi$ was considered in \cite{BCM} where it was shown that solutions for initial data with finite second moment exist globally but they become unbounded and converge to  a Dirac delta function as $t\to \infty$. In all these references, solutions were constructed by approximation methods leading to free energy solutions. 

However, one can use the gradient flow approach introduced in \cite{MR1617171,MR1842429} for diffusions and in \cite{Carrillo-McCann-Villani03} for nonlocal interactions to the parabolic-elliptic Keller-Segel model as in~\cite{BCC08,BCC12}. In particular, the gradient flow interpretation leads to a nice understanding of the energy landscape in the critical mass case $\chi=8 \pi$. There are infinitely many stationary solutions, all of them locally asymptotically stable, for which a second Liapunov functional was found in \cite{BCC12}. The key property of the gradient flow interpretation is that all stationary solutions are infinitely apart from each other in the optimal transport euclidean distance, and each of them has its own basin of attraction.

Returning to the parabolic-parabolic model, it is known that under the condition $\chi < 8\pi$ and with reasonable assumptions on the initial condition, solutions to~\eqref{num2} exist for all times~\cite{CC08,Mixx}. Our objective is to give another proof of the global in time existence. This proof, which  is  based on the so called {\it hybrid variational principle}, does not give strictly speaking any new existence result. Our objective is to emphasize an important, and not immediately apparent,  property: the variational character  of the Keller-Segel model. It also sheds some light on why when $\chi>8\pi$  the issue of global existence versus blow up is so delicate in the parabolic-parabolic case. We note that it is proven in \cite{bilerdolbeault} that when $\chi> 8\pi$ and $\tau$ is sufficiently large then there exist global self-similar solutions. It has also been shown recently in \cite{BGK} that for any initial condition and $\chi>8\pi$ there exists $\tau$ such that the Keller-Segel model has a global solution with this initial condition (the Cauchy problem being understood  in some weak sense, and solutions are not necessarily unique). It is also proven recently in \cite{Sch1,Sch2} that blow-up solutions exist for supercritical mass close to critical and the blow-up profile has been characterised, see also~\cite{lushnikov} for a formal analysis and~\cite{herrerovelazquez} for related results in a bounded domain. This variational interpretation of the Keller-Segel model suggests that there might be a path in the function space along which the free energy functional becomes unbounded leading possibly to blow-up ``along'' this path. For numerical simulations inspired from the scheme see~\cite{epsh,epsh2}. 

Finally, let us mention that the solutions are proven to be unique and the functional has some convexity over the set of solutions as soon as the cell density becomes bounded \cite{CLM}. 

\subsection{The formal gradient flow interpretation}
We denote by $\PP(\RR^2)$ the set of Borel probability measures on $\R^2$ with finite second moment,
and by
\begin{equation*}
  \KK:=\{\rho \in \PP(\RR^2)\,:\, \rho \ll \dd x \mbox{ and } \int_{\RR^2} \rho \log \rho \,\dd x < \infty\}.
\end{equation*}
Let us define the free energy of the Keller-Segel system \eqref{num2} as $\EE:\PP(\R^2)\times \L^2(\R^2)\to(-\infty,+\infty]$ by
\begin{equation}\label{eq:functional}
\EE[\rho,\phi]= \int_{\RR^2} \left\{\frac{1}{\chi}\rho(x)\log\rho(x)-\rho(x)\,\phi(x) + \frac12\,|\nabla \phi(x)|^2+\frac{\alpha}{2}\,\phi(x)^2 \right\}\dd x \;,
\end{equation}
if $(\rho,\phi)\in\KK\times \H^1(\R^2)$
and $\EE[\rho,\phi]= +\infty$ otherwise.
We will see in Lemma \ref{tintin} that if $\chi<8\pi$ then $\EE$ cannot reach the value $-\infty$.
The domain of $\EE$ coincides with $\KK\times \H^1(\R^2)$.

We observe that, at least formally, the system \eqref{num2} has the following ``gradient flow'' structure
\begin{equation}\label{eq:KS3}
\left\{\begin{array}{l}
    \partial_t \rho = \chi\nabla \cdot \left(\rho \nabla \dfrac{\delta \EE}{\delta \rho}\right),\vspace{.3cm}\\
 \tau   \partial_t \phi = -\dfrac{\delta \EE}{\delta \phi},
    \end{array}\right.
\end{equation}
where ${\delta \EE}/{\delta \rho}$ and ${\delta \EE}/{\delta \phi}$ denote the  first variation of the functional $\EE$ with respect to the variables $\rho$ and $\phi$ respectively. Indeed, the right hand side in the first equation of \eqref{eq:KS3} is, up to a factor $\chi$, the ``gradient'' of $\EE$ along the curve $t\mapsto \rho(t,\cdot)$ with respect to the Kantorovich-Rubinstein-Wasserstein distance, referred to hereafter as the Wasserstein distance, $ {\mathcal W}_2= d_W$ and ${\delta \EE}/{\delta \phi}$ is the ``gradient'' of $\EE$ along the curve $t\mapsto \phi(t,\cdot)$ with respect to the $\LL^2(\R^2)$ distance. We can formally compute the dissipation of $\EE[\rho,\phi]$ in \eqref{eq:functional} along a solution of \eqref{eq:KS3} as
$$
\begin{aligned}
\dfrac{\dd }{\dd t} \EE[\rho(t),\phi(t)] &= \int_{\RR^2} \left[ \dfrac{\delta \EE}{\delta \rho} \partial_t \rho  + \dfrac{\delta \EE}{\delta \phi} \partial_t \phi \right] \dd x  \\
&= -\chi \int_{\RR^2} \left| \nabla \dfrac{\delta \EE}{\delta \rho}\right|^2 \rho \,\dd x  - \dfrac1{\tau} \int_{\RR^2} \left(\dfrac{\delta \EE}{\delta \phi} \right)^2 \dd x\,.
\end{aligned}
$$
We recall here that the variational scheme introduced by Jordan, Kinderlehrer, and Otto in~\cite{MR1617171} is based, generally speaking, on a gradient flow of some free energy in the Wasserstein topology.  Here, we work in a product space topology $\mathcal P(\R^2)\times \L^2(\R^2)$ and this justifies the name hybrid variational principle for the implicit scheme that we will introduce in what follows, and which makes the notion of the gradient flow in this context rigorous.

We should point out that hybrid variational principles have been already used to show existence of solutions for  a model of the Janossy effect in a dye doped liquid crystal \cite{MR2461815}, for the  the Keller-Segel model with critical diffusion in $\R^N$, $N\geq 3$~\cite{bl2xx} and some of its variants~\cite{zinsl,zinslma,mimura}, and for the thin film Muskat problem in \cite{LMxx}. 

\subsection{Main results}
When a problem has a gradient flow structure, then its trajectories follow the steepest descent path, and one way to prove existence of solutions is by employing some implicit discrete in time approximation scheme. Suitable time interpolation and compactness arguments should finally give the convergence towards a solution when the time step goes to zero.

The main result of this work is the construction of solutions to~\eqref{num2} using an adapted version of the implicit variational schemes introduced in \cite{MR1617171} for the case of the Wasserstein distance. 
The general setting in metric spaces was also developed by De Giorgi school with the name of minimizing movement approximation scheme (see \cite{AGS08} and the references therein).  The minimising scheme is as follows: given an initial condition $(\rho_0,\phi_0)\in\KK \times \HH^1(\RR^2)$ and a time step $h>0$, we define a sequence $(\rho_{h}^n,\phi_{h}^n)_{n\ge 0}$ in $\KK\times \HH^1(\RR^2)$ by
\begin{equation}\label{scheme:jko}
\left\{
  \begin{array}{l}
    (\rho_{h}^0,\phi_{h}^0) = (\rho_0,\phi_0)\,, \vspace{.3cm}\\
\displaystyle(\rho_{h}^{n+1},\phi_{h}^{n+1})  \in {\rm Argmin}_{(\rho,\phi)\in\KK\times \H^1(\R^2)}  \,\mathcal{F}_{h,n}[\rho,\phi]\,, \qquad n\ge 0\,,
  \end{array}
\right.
\end{equation}
where
\begin{equation*}
   \FF_{h,n}[\rho,\phi] := \frac{1}{2 h}\ \left[ \frac{1}{\chi}d_W^2(\rho,\rho_{h}^{n}) + \tau\ \|\phi-\phi_{h}^{n}\|_{{\rm L}^2(\R^2)}^2 \right] + \EE[\rho,\phi]\,.
\end{equation*}
and $d_W$ denotes the Wasserstein distance which is defined in Section~\ref{wasser}.
%%%%%%%%%%%%%%%%%%%%%%%%
\begin{theorem}[Convergence of the scheme]\label{main1}
Assume that the constants in the Keller-Segel system satisfy $0<\chi<8\pi$, $\tau>0$ and $\alpha> 0$.
Given $(\rho_0, \phi_0)\in \mathcal K\times \H^1(\R^2)$ there exists a  sequence $(\rho_h^n, \phi_h^n)\in \mathcal K\times \H^1(\R^2)$
satisfying the variational principle \eqref{scheme:jko}.
Defining the piecewise constant function
\[
(\rho_h(t),\phi_h(t))=(\rho_h^n,\phi_h^n), \quad \mbox{ if } t\in ((n-1)h, nh],
\]
there exists a decreasing sequence $(h_j)_j$ going to $0$ as $j$ goes to $\infty$
and a continuous curve
$(\rho, \phi):[0,\infty)\to \PP(\R^2)\times \L^2(\R^2)$
such that
\[
\begin{aligned}
\rho_{h_j}(t)&\rightharpoonup \rho(t)\quad  \mbox{weakly in } \LL^1(\R^2), \qquad  t>0,\\
\phi_{h_j}(t)&\rightharpoonup \phi(t)\quad  \mbox{weakly in } \L^2(\R^2) \mbox{ and strongly in } \L_{\rm loc}^p(\R^2), \quad  t>0, \quad  p\in [1,+\infty).
\end{aligned}
\]
Moreover, $(\rho,\phi)$ satisfies the following regularity properties:
\begin{itemize}
%\item[(i)] $\rho\in \AC_{\rm loc}([0,\infty);\PP(\R^2))$ and $\phi\in \AC_{loc\rm }([0,\infty);\L^2(\R^2))$.
\item[(i)] $\rho\in \C^{1/2}([0,T];\PP(\R^2))$ and $\phi\in \C^{1/2}([0,T];\L^2(\R^2))$, for every $T>0$.
\item[(ii)] For all $T>0$, we have the estimate
\begin{equation}\label{ME}
\begin{aligned}
& \sup_{t\in[0,T]}\left\{\int_{\R^2}\left[|x|^2\rho(t,x)+\rho(t,x)|\log\rho(t,x)| \right]\dd x  + \|\phi(t)\|_{\HH^1(\R^2)}\right\}<\infty \,.
\end{aligned}
\end{equation}
\item[(iii)] The pair $(\rho,\phi)$ is a weak solution of the Keller-Segel system \eqref{num2} in the sense  that
\begin{equation} \label{weak solution}
\begin{aligned}
&\int_0^{+\infty}\int_{\RR^2} \partial_t\xi\rho - \nabla\xi\cdot \left( \nabla \rho - \chi\rho\nabla\phi \right)  \dd x \dd t=0, \quad \text{for all }\xi\in \CC^\infty_0((0,\infty)\times\R^2),\\
&\quad \tau \partial_t\phi=\Delta \phi-\alpha\phi+\rho, \quad\mbox{a.e. in }\ (0,\infty)\times \R^2.
\end{aligned}
\end{equation}
\end{itemize}
\end{theorem}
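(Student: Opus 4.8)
The plan is to run the De Giorgi / Jordan--Kinderlehrer--Otto minimising-movement program in the product metric space $(\PP(\R^2),d_W)\times(\L^2(\R^2),\|\cdot\|_{\L^2})$, the genuinely delicate points --- all tied to the subcriticality condition $\chi<8\pi$ --- being the lower bound for $\FF_{h,n}$, the propagation of the entropy and second-moment bounds, and the compactness needed to pass to the limit in the drift $\rho\nabla\phi$. \emph{Step 1 (solvability of the scheme).} I would produce a minimiser of $\FF_{h,n}$ over $\KK\times\H^1(\R^2)$ by the direct method. Since for fixed $\rho$ the map $\phi\mapsto\FF_{h,n}[\rho,\phi]$ is strictly convex and coercive on $\H^1(\R^2)$ (here $\tau>0$ and $\alpha>0$ enter), it is convenient first to minimise in $\phi$ --- which amounts to solving the linear elliptic problem $-\Delta\phi+(\alpha+\tau/h)\phi=\rho+(\tau/h)\phi_h^n$ --- and then to minimise the resulting functional of $\rho$ alone. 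Along a minimising sequence, the Wasserstein term controls the second moment and hence gives tightness, the entropy term gives equi-integrability, and, crucially, after eliminating $\phi$ the potential part of $\EE$ becomes $-\tfrac12\int\rho\,(-\Delta+\alpha)^{-1}\rho$, so that the logarithmic Hardy--Littlewood--Sobolev inequality together with $\chi<8\pi$ (Lemma~\ref{tintin}) keeps $\FF_{h,n}$ bounded below in terms of the second moment and simultaneously controls $\int\rho\phi$; narrow lower semicontinuity of the entropy and of the Wasserstein term, together with weak $\H^1$ lower semicontinuity of the remaining terms, then yields a minimiser.

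\emph{Step 2 (a priori estimates).} Testing the minimality of $(\rho_h^{n+1},\phi_h^{n+1})$ against $(\rho_h^n,\phi_h^n)$ in \eqref{scheme:jko} gives, after summation, the discrete energy--dissipation inequality
\[
\frac{1}{2h}\sum_{k=0}^{N-1}\Big[\tfrac1\chi d_W^2(\rho_h^{k+1},\rho_h^k)+\tau\,\|\phi_h^{k+1}-\phi_h^k\|_{\L^2}^2\Big]+\EE[\rho_h^N,\phi_h^N]\le\EE[\rho_0,\phi_0].
\]
Combined with the lower bound on $\EE$ from Step~1, the total squared displacement over any finite interval is $O(h)$, which by Cauchy--Schwarz yields a uniform $\tfrac12$-H\"older bound for $t\mapsto\rho_h(t)$ in $d_W$ and for $t\mapsto\phi_h(t)$ in $\L^2$, and, since $d_W(\rho,\delta_0)^2=\int|x|^2\rho\,\dd x$, a uniform bound on $\sup_{[0,T]}\int|x|^2\rho_h(t)\,\dd x$; this moment bound together with the lower bound on $\EE$ then produces the uniform entropy and $\H^1$ bounds of \eqref{ME}. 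In addition I would extract, via the De Giorgi variational interpolant (equivalently by expanding the minimality inequality to second order), a discrete form of the parabolic estimate $\int_0^T\!\int_{\R^2}\rho_h\,\big|\nabla\log\rho_h-\chi\nabla\phi_h\big|^2\,\dd x\,\dd t\le C$; using $-\Delta\phi_h\approx\rho_h$ from the $\phi$--equation and the sharp two-dimensional Gagliardo--Nirenberg inequality, whose relevant constant closes precisely because $\chi<8\pi$, this upgrades to a uniform bound on $\sqrt{\rho_h}$ in $\L^2(0,T;\H^1(\R^2))$, hence on $\rho_h$ in $\L^2((0,T)\times\R^2)$ and on $\phi_h$ in $\L^2(0,T;\H^2(\R^2))$.

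\emph{Step 3 (compactness, Euler--Lagrange equations, passage to the limit).} The $\tfrac12$-H\"older bounds allow a refined Arzel\`a--Ascoli argument for curves in metric spaces (see \cite{AGS08}) to produce a subsequence $h_j\to0$ and a limit curve $(\rho,\phi)$ with $\rho_{h_j}(t)\rightharpoonup\rho(t)$ narrowly and $\phi_{h_j}(t)\rightharpoonup\phi(t)$ weakly in $\L^2$ for every $t$; equi-integrability upgrades the first to weak $\L^1$ convergence and Rellich's theorem the second to strong $\L^p_{\rm loc}$ convergence, which, with the H\"older regularity of the limit, gives (i) and (ii). The bounds of Step~2, via an Aubin--Lions--Simon argument (using the time-regularity inherited from the displacement estimate), moreover give $\rho_{h_j}\to\rho$ strongly in $\L^2(0,T;\L^2_{\rm loc}(\R^2))$ and $\nabla\phi_{h_j}\to\nabla\phi$ strongly in $\L^2(0,T;\L^2_{\rm loc}(\R^2))$. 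For the equations: minimality of $\phi_h^{n+1}$ gives exactly the discrete equation $-\Delta\phi_h^{n+1}+(\alpha+\tau/h)\phi_h^{n+1}=\rho_h^{n+1}+(\tau/h)\phi_h^n$, which in interpolated form passes to the limit and, using (ii), yields $\tau\partial_t\phi=\Delta\phi-\alpha\phi+\rho$ a.e.; for $\rho$ one perturbs $\rho_h^{n+1}$ along the flow of $\zeta\in\CC_0^\infty(\R^2;\R^2)$, setting $\rho_s:=(\id+s\zeta)_\#\rho_h^{n+1}$ and differentiating $s\mapsto\FF_{h,n}[\rho_s,\phi_h^{n+1}]$ at $s=0$, the Wasserstein term contributing $\tfrac1{h\chi}\int(x-y)\cdot\nabla\zeta(x)\,\dd\gamma_n$ with $\gamma_n$ an optimal plan between $\rho_h^{n+1}$ and $\rho_h^n$. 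Choosing $\zeta=\nabla\xi(nh,\cdot)$ for $\xi\in\CC_0^\infty((0,\infty)\times\R^2)$, summing over $n$, integrating by parts in time in the discrete sum and bounding the Taylor remainder by $\|D^2\xi\|_\infty\sum_k d_W^2(\rho_h^{k+1},\rho_h^k)=O(h)$, one reaches, up to terms vanishing as $h_j\to0$, the weak formulation; the terms $\partial_t\xi\,\rho$ and $\nabla\xi\cdot\nabla\rho$ are identified from the strong $\L^2\L^2_{\rm loc}$ convergence of $\rho_{h_j}$ (which gives $\nabla\sqrt{\rho_{h_j}}\rightharpoonup\nabla\sqrt\rho$ weakly, hence $\nabla\rho_{h_j}\rightharpoonup\nabla\rho$), and the drift $\rho_{h_j}\nabla\phi_{h_j}\rightharpoonup\rho\nabla\phi$ from the strong $\L^2\L^2_{\rm loc}$ convergence of $\nabla\phi_{h_j}$ against the weak $\L^2$ convergence of $\rho_{h_j}$. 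This proves (iii).

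\emph{Main obstacle.} The technical heart is Step~2. The coupling term $-\int\rho\phi$ is not sign-definite, so both the lower bound making the scheme well posed and the propagation of the entropy and second-moment bounds rely on the logarithmic Hardy--Littlewood--Sobolev inequality, hence on $\chi<8\pi$; and since the drift $\rho_h\nabla\phi_h$ is a product of two a priori only weakly converging factors, the passage to the limit in \eqref{weak solution} forces the additional parabolic estimate $\sqrt{\rho_h}\in\L^2(0,T;\H^1(\R^2))$ (equivalently $\rho_h\in\L^2((0,T)\times\R^2)$), whose proof once more exploits the subcritical threshold through a sharp Gagliardo--Nirenberg inequality. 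Once these bounds are secured, the remainder is a careful but essentially routine transcription of the minimising-movement machinery to the product space.
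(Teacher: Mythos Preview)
Your overall plan is right in outline, and several steps match the paper's, but there is a genuine circularity in Step~2 that the paper resolves by a device you do not mention.

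\textbf{The gap.} You propose to extract the estimate $\int_0^T\!\int_{\R^2}\rho_h\,|\nabla\log\rho_h-\chi\nabla\phi_h|^2\,\dd x\,\dd t\le C$ directly from the De~Giorgi variational interpolant. The interpolant does yield a bound on the \emph{metric slope} of $\EE$, but identifying that slope with the explicit integral above requires the Euler--Lagrange equation (this is how \eqref{discreteW2} is obtained in Lemma~\ref{lem:b7}). In turn, deriving the Euler--Lagrange equation in the $\rho$ variable hinges on passing to the limit in
\[
\frac{1}{\delta}\int_{\R^2}\rho\,\big(\phi-\phi\circ(\id+\delta\zeta)\big)\,\dd x\ \longrightarrow\ -\int_{\R^2}\rho\,\zeta\cdot\nabla\phi\,\dd x,
\]
and since $\phi$ is a priori only in $\HH^1(\R^2)$ while $\rho$ is only in $\L\log\L$, this limit is not justified without already knowing $\rho\in\L^2(\R^2)$ --- precisely the regularity you are trying to establish. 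The paper breaks this loop with the \emph{flow interchange} technique of Matthes--McCann--Savar\'e (Lemma~\ref{impro}): one compares the minimiser $(\rho_h^n,\phi_h^n)$ against the EVI flow of the auxiliary displacement-convex functional
\[
\V[\rho,\phi]=\frac{1}{\chi}\int_{\R^2}\rho\log\rho\,\dd x+\frac{\tau}{2}\int_{\R^2}\big(|\nabla\phi|^2+\alpha\phi^2\big)\,\dd x,
\]
whose semigroup is the heat flow on $\rho$ and the damped heat flow on $\phi$. This produces $\nabla\rho_h^n/\rho_h^n\in\L^2(\rho_h^n)$ and $\phi_h^n\in\HH^2(\R^2)$ \emph{before} any Euler--Lagrange computation, the $\L^2$ bound on $\rho_h^n$ then coming from the Biler--Hebisch--Nadzieja inequality~\eqref{BHN} (not from a Gagliardo--Nirenberg closure using $-\Delta\phi_h\approx\rho_h$, which again presupposes the regularity). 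Only once this is in hand does the paper derive the Euler--Lagrange equations and invoke the De~Giorgi interpolant for the discrete energy identity~\eqref{discreteEI}.

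\textbf{Other differences worth noting.} In Step~1 you eliminate $\phi$ and appeal to the logarithmic Hardy--Littlewood--Sobolev inequality on the reduced functional; the paper instead keeps the joint functional and obtains the lower bound via Onofri's inequality (Lemma~\ref{tintin}), following Calvez--Corrias. Both routes exploit $\chi<8\pi$ and either is fine. For the passage to the limit in the drift $\rho_h\nabla\phi_h$, the paper does \emph{not} use Aubin--Lions on $\rho_h$: it pairs \emph{weak} $\L^2((0,T)\times\R^2)$ convergence of $\rho_{h_j}$ with \emph{strong} $\L^2(0,T;\HH^1_{\rm loc})$ convergence of $\phi_{h_j}$, the latter from the $\L^2(0,T;\HH^2)$ bound plus the $\L^2$-H\"older time regularity of $\phi_h$ and interpolation. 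Your Aubin--Lions argument for $\rho_h$ would require time compactness of $\rho_h$ in a topology compatible with $\L^2$, and the Wasserstein H\"older bound does not directly supply this. Likewise, the convergence of $\nabla\rho_h$ is handled in the paper not through $\nabla\sqrt{\rho_h}$ but via the vector-field compactness of Proposition~\ref{prop:vf} applied to $v_h=\nabla\rho_h/\rho_h$ against the narrowly convergent densities.
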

%---------------------
%For a metric space $(X,d)$, we say that a curve $u:[0,\infty)\to X$ belongs to ${AC}^2_{\rm loc}([0,\infty);X)$ 
%    if there exists a function $g\in L^2(0,T)$ for every $T>0$ such that
%    \begin{equation}
%         d(u(s),u(t))\le \int_s^t g(r)\, d r \quad \text{for all } 0\le s\le t<\infty.
%    \end{equation}

\begin{theorem}[Dissipation inequality]\label{main2}
  Under the same assumptions as Theorem~\ref{main1}, the solution $(\rho, \phi)$ constructed in Theorem~\ref{main1} furthermore satisfies: 
  \begin{itemize}
\item[(i)] The regularity property: for all $T>0$, $\rho\in \L^2(0,T;\R^2)\cap\L^1((0,T);W^{1,1}(\R^2))$, $\phi\in\L^2((0,T);\HH^2(\R^2))\cap \HH^1((0,T);\L^2(\R^2))$, and the Fisher information bound holds
\begin{equation}\label{B}
\int_0^T\int_{\R^2}\left|\frac{\nabla\rho}{\rho}\right|^2\rho\dd x\dd t <+\infty\,.
\end{equation}
\item[(ii)] The energy dissipation inequality: For all $T > 0$
\begin{equation}
\begin{aligned}\label{energy inequality}
\frac{1}{\chi}\int_0^T\int_{\R^2}\left|\frac{\nabla\rho}{\rho}-\chi\nabla\phi \right|^2\,\rho\dd x\dd t
&+ \frac1\tau\int_0^T\|\Delta\phi -\alpha\phi+\rho \|_{{\rm L}^2(\R^2)}^2 \dd t +\\
&+\EE[\rho(T),\phi(T)]
\leq \EE[\rho_0,\phi_0].
\end{aligned}
\end{equation}
  \end{itemize}
\end{theorem}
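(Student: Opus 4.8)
\noindent The plan is to squeeze out of the scheme~\eqref{scheme:jko} two further families of $h$-uniform estimates, beyond those used in Theorem~\ref{main1}, and then to pass them to the limit along the subsequence $(h_j)$. Throughout, $(\rho_h^n,\phi_h^n)$ denotes a solution of~\eqref{scheme:jko}, and I set $D^2\big((\rho,\phi),(\sigma,\psi)\big):=\tfrac1\chi d_W^2(\rho,\sigma)+\tau\|\phi-\psi\|_{\L^2(\R^2)}^2$, so that~\eqref{scheme:jko} minimises $\tfrac1{2h}D^2(\cdot,(\rho_h^n,\phi_h^n))+\EE$. First I would record the Euler--Lagrange equations of the minimiser $(\rho_h^{n+1},\phi_h^{n+1})$. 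Since $\phi\mapsto\FF_{h,n}[\rho,\phi]$ is smooth and convex on $\H^1(\R^2)$, perturbing in $\phi$ yields
\begin{equation}\label{ELphi}
\tfrac\tau h(\phi_h^{n+1}-\phi_h^n)=\Delta\phi_h^{n+1}-\alpha\phi_h^{n+1}+\rho_h^{n+1}\qquad\text{in }\ \H^{-1}(\R^2),
\end{equation}
while perturbing $\rho_h^{n+1}$ along flows of smooth compactly supported vector fields (as in~\cite{MR1617171}) produces the weak form of the $\rho$-equation in~\eqref{weak solution}. Comparing the minimiser with $(\rho_h^n,\phi_h^n)$ gives the one-step inequality $\tfrac1{2h}D^2\big((\rho_h^{n+1},\phi_h^{n+1}),(\rho_h^n,\phi_h^n)\big)+\EE[\rho_h^{n+1},\phi_h^{n+1}]\le\EE[\rho_h^n,\phi_h^n]$; summing over $n$ and using that $\EE$ is bounded below when $\chi<8\pi$ (Lemma~\ref{tintin}) yields $\sum_n\tfrac1h\|\phi_h^{n+1}-\phi_h^n\|_{\L^2}^2\le C$ and $\sum_n\tfrac1{\chi h}d_W^2(\rho_h^{n+1},\rho_h^n)\le C$, uniformly in $h$. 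Combined with~\eqref{ELphi} and the bound $\sup_n\|\phi_h^n\|_{\H^1}\le C$ of Theorem~\ref{main1}, the first bound already controls $\sum_n h\|\Delta\phi_h^{n+1}-\alpha\phi_h^{n+1}+\rho_h^{n+1}\|_{\L^2}^2$ and, once the $\L^2$-bound on $\rho_h^{n+1}$ of Step~2 below is known, all the norms of item~(i) for $\phi$.

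\medskip
\noindent The central point is the Fisher information bound~\eqref{B}, which I would obtain by a flow interchange argument. Let $(S_t)_{t\ge0}$ be the heat semigroup, which is the $d_W$-gradient flow of the entropy $\mathrm{Ent}(\rho):=\int_{\R^2}\rho\log\rho\,\dd x$; since $\mathrm{Ent}$ is geodesically convex on $\PP(\R^2)$, the evolution variational inequality $\limsup_{s\downarrow0}\tfrac1s\big(d_W^2(S_s\mu,\nu)-d_W^2(\mu,\nu)\big)\le 2(\mathrm{Ent}(\nu)-\mathrm{Ent}(\mu))$ holds. Inserting $(S_s\rho_h^{n+1},\phi_h^{n+1})$ as a competitor in~\eqref{scheme:jko}, dividing by $s>0$ and letting $s\downarrow0$ --- this is where the flow interchange lemma of the De~Giorgi school (see~\cite{AGS08} and the references therein) is used, both to pass to the limit and for the lower semicontinuity of the Fisher functional along the heat flow --- produces
\begin{equation}\label{FI}
\tfrac1\chi\int_{\R^2}\Big|\tfrac{\nabla\rho_h^{n+1}}{\rho_h^{n+1}}\Big|^2\rho_h^{n+1}\,\dd x+\int_{\R^2}\rho_h^{n+1}\,\Delta\phi_h^{n+1}\,\dd x\le\tfrac1{\chi h}\big(\mathrm{Ent}(\rho_h^n)-\mathrm{Ent}(\rho_h^{n+1})\big).
\end{equation}
The key manipulation is that I would \emph{not} estimate the coupling term $\int\rho_h^{n+1}\Delta\phi_h^{n+1}=-\int\nabla\rho_h^{n+1}\cdot\nabla\phi_h^{n+1}$ (which has no definite sign) directly, but substitute~\eqref{ELphi}, turning it into $\tfrac\tau h\int\rho_h^{n+1}(\phi_h^{n+1}-\phi_h^n)+\alpha\int\rho_h^{n+1}\phi_h^{n+1}-\|\rho_h^{n+1}\|_{\L^2}^2$, whose last term has the \emph{favourable} sign. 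Multiplying~\eqref{FI} by $\chi h$, summing over $n$, telescoping the entropy and using $\mathrm{Ent}(\rho_h^N)\ge-\int|x|^2\rho_h^N\,\dd x-\log\pi\ge-C$ (Theorem~\ref{main1}(ii)), the $\phi$-terms are absorbed by a Cauchy--Schwarz--Young estimate relying on the two bounds of the previous paragraph, leaving an inequality of the form $\sum_n h\int|\nabla\rho_h^{n+1}|^2/\rho_h^{n+1}\,\dd x-(\chi+\eta)\sum_n h\|\rho_h^{n+1}\|_{\L^2}^2\le C_\eta$. Here subcriticality enters through the \emph{sharp} two-dimensional Gagliardo--Nirenberg inequality for probability densities, $\|\rho\|_{\L^2(\R^2)}^2\le\tfrac1{8\pi}\int_{\R^2}|\nabla\rho|^2/\rho\,\dd x$: since $\chi<8\pi$, choosing $\eta$ small makes $1-(\chi+\eta)/(8\pi)>0$, so the $\L^2$-terms can be absorbed on the left and one concludes $\sum_n h\int|\nabla\rho_h^{n+1}|^2/\rho_h^{n+1}\,\dd x\le C$, hence also $\sum_n h\|\rho_h^{n+1}\|_{\L^2}^2\le C$ and $\sum_n h\|\nabla\rho_h^{n+1}\|_{\L^1}\le C$. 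This closes the $\phi$-estimates of Step~1; passing these bounds to the limit along $(h_j)$ with the lower semicontinuity of $\rho\mapsto\int|\nabla\rho|^2/\rho$ and Fatou's lemma yields item~(i), including~\eqref{B}, and the second equation of~\eqref{weak solution} is~\eqref{ELphi} in the limit.

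\medskip
\noindent For the dissipation inequality~\eqref{energy inequality} the one-step inequality of Step~1 is not enough: it only gives~\eqref{energy inequality} with the constants $\tfrac1\chi$ and $\tfrac1\tau$ halved. The missing half I would recover through De~Giorgi's variational interpolation. For $t=nh+\delta$, $\delta\in(0,h]$, let $(\widetilde\rho_h(t),\widetilde\phi_h(t))$ minimise $\tfrac1{2\delta}D^2(\cdot,(\rho_h^n,\phi_h^n))+\EE$; the minimal value $\ell_h^n(\delta)$ is locally absolutely continuous with $(\ell_h^n)'(\delta)=-\tfrac1{2\delta^2}D^2\big((\widetilde\rho_h(nh+\delta),\widetilde\phi_h(nh+\delta)),(\rho_h^n,\phi_h^n)\big)$, which integrated on $(0,h]$ and summed over $n$ gives an \emph{exact} discrete balance containing two dissipation sums. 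The standard slope estimate at a one-step minimiser, $|\partial\EE|(w_\delta)\le\tfrac1\delta D(w_\delta,w)$ --- with $|\partial\EE|^2(\rho,\phi)=\tfrac1\chi\int_{\R^2}|\tfrac{\nabla\rho}\rho-\chi\nabla\phi|^2\rho\,\dd x+\tfrac1\tau\|\Delta\phi-\alpha\phi+\rho\|_{\L^2}^2$ the metric slope of $\EE$ in the product space --- bounds each of these two sums from below by $\tfrac12\int_0^{Nh}|\partial\EE|^2$ evaluated along the piecewise constant interpolant $(\overline\rho_h,\overline\phi_h)$, respectively along the variational interpolant $(\widetilde\rho_h,\widetilde\phi_h)$. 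Thus $\EE[\rho_h^N,\phi_h^N]+\tfrac12\int_0^{Nh}|\partial\EE|^2(\overline\rho_h,\overline\phi_h)\,\dd t+\tfrac12\int_0^{Nh}|\partial\EE|^2(\widetilde\rho_h,\widetilde\phi_h)\,\dd t\le\EE[\rho_0,\phi_0]$. Letting $h=h_j\to0$ --- the two interpolants have mutual distance $O(h)$ and both converge to the limit curve $(\rho,\phi)$ of Theorem~\ref{main1} --- and using the sequential lower semicontinuity of $\EE$ and of $|\partial\EE|^2$ along the convergence (the uniform $\L^2$-bound on $\rho$ from item~(i) upgrading the weak-$\L^1$ convergence of the densities to weak-$\L^2$), together with Fatou's lemma in time, the two halves combine into the full inequality~\eqref{energy inequality}.

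\medskip
\noindent I expect the hard part to be Step~2. Two points demand care. First, before item~(i) is available, the coupling term $\int\rho_h^{n+1}\Delta\phi_h^{n+1}$ in~\eqref{FI} (i.e.\ $-\int\nabla\rho_h^{n+1}\cdot\nabla\phi_h^{n+1}$) is only formal; giving it a rigorous meaning forces one to work throughout with the regularised competitors $S_s\rho_h^{n+1}$, $s>0$, to derive all bounds with constants independent of $s$, and only then to let $s\downarrow0$ --- which is precisely what the abstract flow interchange lemma delivers. Secondly, the absorption at the end of Step~2 succeeds on the \emph{whole} subcritical range $\chi<8\pi$ only because the sharp constant in the Gagliardo--Nirenberg inequality $\|u\|_{\L^4(\R^2)}^4\le\tfrac1{2\pi}\|u\|_{\L^2}^2\|\nabla u\|_{\L^2}^2$ is exactly the one tied to the critical mass; any lossy version would restrict $\chi$, and it is the same threshold and the same mechanism that govern global existence versus blow-up. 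By contrast, in Step~3 the only genuinely non-routine ingredient is the De~Giorgi interpolation, which is what upgrades the naive ``half'' dissipation inequality to~\eqref{energy inequality}.
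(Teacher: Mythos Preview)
Your overall strategy --- flow interchange for the Fisher information, then De~Giorgi variational interpolation for the full dissipation constants --- is the paper's, and your Step~3 matches Section~4.3. The genuine gap is in Step~2: the inequality $\|\rho\|_{\L^2(\R^2)}^2\le\tfrac1{8\pi}\int_{\R^2}|\nabla\rho|^2/\rho\,\dd x$ that you call the ``sharp two-dimensional Gagliardo--Nirenberg inequality for probability densities'' is not correct. The extremizers of $\|u\|_{\L^4}^4/(\|u\|_{\L^2}^2\|\nabla u\|_{\L^2}^2)$ in $\R^2$ are the cubic-NLS ground states (Weinstein), not Gaussians; Gaussians do give the ratio $1/(2\pi)$ but they do not saturate, so the sharp constant is strictly larger than $1/(2\pi)$. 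There is no link between that constant and $8\pi$: the Keller--Segel threshold comes from the logarithmic HLS/Onofri inequality (cf.\ Lemma~\ref{tintin}), whose extremizers are the M\"obius orbit of $H(x)=\pi^{-1}(1+|x|^2)^{-2}$ and which has a different scaling structure. Hence your absorption $1-(\chi+\eta)/(8\pi)>0$ is based on a false premise, and the argument as written does not close on the full range $\chi<8\pi$.

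The paper's fix, which grafts onto your Step~2 with no other change, is to replace the scale-invariant GN bound by the Biler--Hebisch--Nadzieja inequality~\eqref{BHN}, $\|\rho\|_{\L^2}^2\le\eps\,\|\rho\log\rho\|_{\L^1}\int|\nabla\rho|^2/\rho+L_\eps$, with $\eps>0$ free. Since the uniform entropy bound $\sup_n\|\rho_h^n\log\rho_h^n\|_{\L^1}\le\Lambda(T)$ is already in hand from Lemma~\ref{lemma7} (and it is \emph{there}, via Onofri, that $\chi<8\pi$ is spent), taking $\eps$ small absorbs the $\|\rho\|_{\L^2}^2$ term for every $\chi$. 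A further difference of approach, though not a gap: the paper runs the flow interchange with the \emph{joint} auxiliary functional $\V[\rho,\phi]=\tfrac1\chi\int\rho\log\rho+\tfrac\tau2\int(|\nabla\phi|^2+\alpha\phi^2)$, i.e.\ heat flow on $\rho$ and damped heat flow on $\phi$ simultaneously (Lemma~\ref{impro}). This packages the Fisher bound and the $\HH^2$-regularity of $\phi_h^n$ into a single estimate and avoids substituting the $\phi$-Euler--Lagrange equation~\eqref{ELeq2} into the flow-interchange inequality. Your variant (flow only $\rho$, then read $\Delta\phi_h^{n+1}\in\L^2$ off~\eqref{ELeq2} once $\rho_h^{n+1}\in\L^2$) is also legitimate, provided the coupling term $\int\rho_h^{n+1}\Delta\phi_h^{n+1}$ is handled at positive auxiliary time, as you acknowledge.
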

%------------------
Obviously the interval  $(0,T)$ can be replaced by any  $(T_1,T_2)$  in  $\R^+$. Since we do not know whether the non-negative $\rho$ is positive in $\R^2$ the meaning in~\eqref{B} is that the integrand is equal to $\left|{\nabla \log \rho}\right|^2\rho$ if $\rho$ is positive and zero elsewhere. Owing to the energy dissipation an alternative formulation for the equation on $\rho$ is $\partial_t \rho + \nabla \cdot \left(\rho J\right)=0$ with $J:=\nabla \log \rho - \chi \nabla \phi$ where $\rho$ is positive and $J(t) \in \LL^2(\R^2,\rho(t)\dd x)$ for almost every $t$.
\medskip

Several difficulties arise in the proof of the well-posedness and convergence of the minimising scheme.  First of all, since the energy $\EE$ is not displacement convex, standard results from~\cite{AGS08,Vi03} do not apply and even the existence of a minimiser is not clear. This is primarily because we choose to work in the whole space $\R^2$ rather than a bounded domain, a choice made to replicate the optimal known results. Section~\ref{mini} is devoted to this minimisation problem. Let us mention that this functional has some convexity properties but only when restricted to bounded densities as proven in \cite{CLM}. However, we cannot take advantage of this convexity for the construction of weak solutions with the regularity stated on the initial data.

The second issue has to do with the regularity of the minimisers obtained in each step without which we cannot show convergence of the discrete scheme to a solution of \eqref{weak solution}. To derive the Euler-Lagrange equation satisfied by a minimiser $({\rho},{\phi})$ of $\mathcal{F}_{h,n}$ in $\KK\times \HH^1(\RR^2)$, the parameters $h$ and $n$ being fixed, we consider an ``optimal transport'' perturbation for ${\rho}$ and a $\LL^2$-perturbation for ${\phi}$ defined for $\delta\in (0,1)$ by
\begin{equation*}
 \rho_\delta= (\id + \delta\,\zeta) _\# {\rho}\;, \quad \phi_\delta := {\phi} + \delta\, w \;,
\end{equation*}
where $\zeta\in\C^\infty_0(\RR^2;\RR^2)$ and $w\in\C^\infty_0(\RR^2)$. We note that $\rho_\delta$ is the push forward of $\rho$ by the map $\id+\delta\zeta$.  Identifying the Euler-Lagrange equation requires passage to the limit as $\delta\to 0$ in
\begin{equation*}
\frac{d^2_W(\rho_\delta ,\rho_{0})-d^2_W({\rho} ,\rho_{0})}{2\delta} \;\;\mbox{ and }\;\;
\frac{1}{\delta}\int_{\RR^2} (\rho_\delta \log \rho_\delta-{\rho}\log{\rho})\dd x,
\end{equation*}
which can be done by standard arguments, and also in
\begin{equation*}
\frac{1}{\delta}\ \int_{\RR^2} ({\rho}\,{\phi}-\rho_\delta\,\phi_\delta)(x) \dd x
= \int_{\RR^2} {\rho}(x) \left[ \frac{{\phi}(x)-{\phi}(x+\delta\zeta(x))}{\delta} - w(x+\delta\zeta(x)) \right]\dd x\;.
\end{equation*}
This is where the main difficulty lies: indeed, since ${\phi}\in \HH^1(\RR^2)$, we only have
\begin{equation*}
\frac{{\phi}\!\circ\!(\id + \delta\zeta) -{\phi}}{\delta} \rightharpoonup \zeta \cdot \nabla{\phi}\quad \mbox{in $\LL^2(\RR^2)$,}
\end{equation*}
while ${\rho}$ is only in $\KK$.
Consequently the product ${\rho} \zeta\cdot\nabla{\phi}$ which is the candidate for the limit
may not be well defined and the regularity of $({\rho},{\phi})$ has to be improved. We also remark that the dissipation of the functional involves $\Delta \phi$, and therefore we need to show additional regularity on the potential $\phi$ to have a well-defined dissipation of~\eqref{eq:functional}.
A general strategy to overcome this regularity issue is explained in subsection \ref{sec:MMS} using an adaptation of the arguments in~\cite{bl2xx,MMS09}.

%%%%%%%%%%%%%%%%%%%%%%%%%%%%%%%%%%%%%%%%%%%%%%%

\section{Preliminaries}
\subsection{Lower semi-continuity of functional defined on measures }
The following lower semi-continuity result is very useful.
For the proof we refer to Theorem 2.34 and Example 2.36 of \cite{AFP00}.
%----------------------------
\begin{proposition}\label{prop:lscIF}
 Let $(\mu_n)_{n\ge0}$, $(\gamma_n)_{n\ge 0}$ be two sequences of Borel positive measures in $\R^d$, $d \ge 1$, such that $\mu_n$ is absolutely continuous with respect to $\gamma_n$ for each $n \ge 1$. Consider $f:\R \to [0,\infty]$ a convex function with super-linear growth at infinity.
Assume that $(\mu_n)_{n\ge0}$, $(\gamma_n)_{n\ge 0}$ weakly-* converge (in duality with $C_c(\R^d)$) to $\mu$ and $\gamma$ respectively and
\begin{equation*}
    \sup_{n \ge 1}\int_{\R^d} f\Big(\frac{\dd\mu_n}{\dd\gamma_n} \Big) \dd\gamma_n < \infty .
\end{equation*}
Then $\mu$ is absolutely continuous with respect to $\gamma$ and
\begin{equation*}
    \liminf_{n\to +\infty} \int_{\R^d} f\Big(\frac{\dd\mu_n}{\dd\gamma_n} \Big) \dd\gamma_n \geq \int_{\R^d} f\Big(\frac{\dd\mu}{\dd\gamma} \Big) \dd\gamma.
\end{equation*}
\end{proposition}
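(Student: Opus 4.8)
The plan is to realise the integral functional $\mathcal G(\mu,\gamma):=\int_{\R^d}f(\dd\mu/\dd\gamma)\,\dd\gamma$, with the convention $\mathcal G(\mu,\gamma)=+\infty$ whenever $\mu\not\ll\gamma$, as a supremum of functionals that are \emph{affine}, hence continuous, with respect to weak-$*$ convergence in duality with $C_c(\R^d)$; since a supremum of continuous functionals is lower semicontinuous, this will give the result. The two ingredients that make this possible are Legendre--Fenchel duality and the superlinear growth of $f$. I would first replace $f$ by its convex lower semicontinuous envelope if necessary (a convex function being anyway lower semicontinuous on the interior of its domain), so that $f\colon\R\to[0,\infty]$ is convex and lower semicontinuous; only its restriction to $[0,\infty)$ matters since the densities $\dd\mu_n/\dd\gamma_n$ are non-negative. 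Superlinear growth at $+\infty$ ensures that the conjugate $f^*(p)=\sup_{z\ge0}\bigl(pz-f(z)\bigr)$ is finite (hence continuous) for every $p\in\R$, and that the recession function satisfies $f^\infty(1)=+\infty$. Choosing a countable dense set $\{p_k\}_{k\ge1}\subset\R$ with $p_1=0$ and setting $a_k:=p_k$, $b_k:=f^*(p_k)$, one has $f(z)=\sup_{k\ge1}(a_kz-b_k)$ on $[0,\infty)$, and the truncations $g_j(z):=\max_{1\le k\le j}(a_kz-b_k)$ increase pointwise to $f$ while staying bounded below by $-f^*(0)\ge0$ (this is where $p_1=0$ is used, to avoid integrability troubles with a possibly infinite $\gamma$).

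The core step is the duality identity
\[
\mathcal G(\mu,\gamma)=\sup\Bigl\{\,\sum_{k=1}^{j}\Bigl(a_k\!\int_{\R^d}\!\psi_k\,\dd\mu-b_k\!\int_{\R^d}\!\psi_k\,\dd\gamma\Bigr)\ :\ j\ge1,\ \psi_1,\dots,\psi_j\in C_c(\R^d),\ \psi_k\ge0,\ \psi_1+\dots+\psi_j\le1\,\Bigr\},
\]
valid for any pair $\mu,\gamma$ of positive Radon measures. For the inequality ``$\ge$'' I would invoke the Fenchel--Young inequality $a_k h-b_k\le f(h)$, where $h$ denotes the density of the absolutely continuous part of $\mu$ relative to $\gamma$: since the $\psi_k$ are non-negative with $\sum_k\psi_k\le1$ and $f\ge0$, the absolutely continuous contribution of the right-hand side is at most $\int f(h)\,\dd\gamma$, while the contribution of the singular part $\mu^s$ of $\mu$ equals $\sum_k a_k\int\psi_k\,\dd\mu^s$ and can be made arbitrarily large when $\mu^s\neq0$, by concentrating the $\psi_k$ on a small open neighbourhood of a $\gamma$-null set carrying $\mu^s$ and sending the relevant $a_k\to+\infty$ (here superlinear growth is used through the estimate $f^*(a)=o(a)$); this reproduces the value $+\infty$ on the left when $\mu\not\ll\gamma$. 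For the reverse inequality, assuming $\mu\ll\gamma$, I would fix $j$, pick a Borel partition $\{A_k\}_{k=1}^j$ of $\R^d$ on which $g_j(h)=a_kh-b_k$, so that $\int g_j(h)\,\dd\gamma=\sum_k\bigl(a_k\mu(A_k)-b_k\gamma(A_k)\bigr)$, and then approximate the indicators $\mathbf 1_{A_k}$ from inside by a continuous partition of unity $\psi_k\in C_c(\R^d)$ subordinate to a large ball, using inner regularity of $\mu$ and $\gamma$; passing to the limit in this approximation and then letting $j\to\infty$ via monotone convergence (legitimate because $g_j\ge0$) recovers $\int f(h)\,\dd\gamma$ from below.

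With the duality identity established, the conclusion is immediate: for each admissible choice of $j$ and $\psi_1,\dots,\psi_j$, the functional $(\mu,\gamma)\mapsto\sum_{k=1}^j\bigl(a_k\int\psi_k\,\dd\mu-b_k\int\psi_k\,\dd\gamma\bigr)$ is continuous for weak-$*$ convergence in duality with $C_c(\R^d)$, so $\mathcal G$, being a supremum of such functionals, is weak-$*$ lower semicontinuous. Applied along $(\mu_n,\gamma_n)\rightharpoonup(\mu,\gamma)$ this gives at once $\mathcal G(\mu,\gamma)\le\liminf_n\mathcal G(\mu_n,\gamma_n)$, which is the asserted inequality; and since the right-hand side is finite by the uniform bound in the hypothesis, $\mathcal G(\mu,\gamma)<\infty$, so $\mu$ is absolutely continuous with respect to $\gamma$ and $\mathcal G(\mu,\gamma)=\int_{\R^d}f(\dd\mu/\dd\gamma)\,\dd\gamma$, as required.

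The step I expect to be the main obstacle is the reverse inequality in the duality identity, namely the passage from the measurable partition $\{A_k\}$ to a continuous partition of unity while keeping control of $\mu(A_k)$ and $\gamma(A_k)$: this requires inner regularity of the measures and, if the total masses are infinite, a truncation to large balls, and it is precisely the compact support of the $\psi_k$, together with the constraint $\sum_k\psi_k\le1$ and the inclusion of $p=0$ among the $p_k$, that rules out the pathologies stemming from $f^*$ not vanishing at $0$. By contrast, the uniform bound $\sup_n\mathcal G(\mu_n,\gamma_n)<\infty$ is used only at the very end, in combination with superlinear growth, to conclude that the limit $\mu$ is absolutely continuous with respect to $\gamma$; it plays no role in the lower semicontinuity statement itself. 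This argument is essentially that of Theorem~2.34 and Example~2.36 in \cite{AFP00}, to which the paper refers.
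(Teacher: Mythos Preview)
The paper gives no proof of its own here; it simply refers to Theorem~2.34 and Example~2.36 of \cite{AFP00}, and your argument is exactly that proof: express the functional as a supremum of weak-$*$ continuous affine functionals via Fenchel duality and continuous partitions of unity, then read off lower semicontinuity. One small slip worth fixing: the parenthetical estimate $f^*(a)=o(a)$ is false in general (take $f(z)=z^2$, where $f^*(p)=p^2/4$), but you do not need it---superlinear growth of $f$ is used only to guarantee that $f^*$ is finite everywhere, and then for the singular part you may first fix $a$ large and \emph{afterwards} shrink the neighbourhood of the $\gamma$-null set so that $f^*(a)\int\psi\,\dd\gamma$ becomes negligible.
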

%----------------------------
\subsection{Wasserstein distance and transport map}\label{wasser}

We recall that the Wasserstein distance in $\PP(\R^2)$,
is defined by
\begin{equation}\label{Kanto}
d_W^2(\mu,\nu):=\min_{\gamma\in\PP({\R^2\times\R^2})}\left\{\int_{\R^2\times\R^2}\!\!\!\!\!|x-y|^2\,d\gamma:
\,(\pi_1)_\#\gamma=\mu,\,(\pi_2)_\#\gamma=\nu
\right\}
\end{equation}
where $\pi_i$, $i=1,2$, denote the canonical projections on the factors. When $\mu$ is absolutely continuous with respect to the Lebesgue measure, the minimum problem~\eqref{Kanto}
has a unique solution $\gamma$ induced by a transport map $T_\mu^\nu$,
$\gamma=(\id,T_\mu^\nu)_\#\mu$. In particular, $T_\mu^\nu$ is the unique solution of the Monge optimal transport problem
$$
\min_{S:\R^2\longrightarrow\R^2}\left\{\int_{\R^2}|S-\id|^2\dd\mu:\ S_\#\mu=\nu\right\},
$$
of which~\eqref{Kanto} is the Kantorovich relaxed version. Finally, we recall that if also
$\nu$ is absolutely continuous with respect to Lebesgue measure, then
\begin{equation}\label{ot}
T_\nu^\mu\circ T_\mu^\nu=\id\quad\hbox{\rm $\mu$-a.e.}
\quad\hbox{\rm and}\quad
T_\mu^\nu\circ T_\nu^\mu=\id\quad\hbox{\rm $\nu$-a.e.}
\end{equation}
Since in this paper we deal only with absolutely continuous measures,
we identify the measures with their densities with respect to the Lebesgue measure.

\subsection{Boundedness from below of the functional $\EE$}
The following result is due to~\cite[Lemma~3.1]{CC08} and is a consequence of the Onofri inequality on the sphere~\cite{O82}:
%----------------------------
\begin{lemma}[Onofri Inequality]%\label{lemma:Onofri}
Let $H:\R^2 \to \R$ be defined by
\begin{equation*}%\label{defH}
    H(x):=\frac{1}{\pi (1+|x|^2)^2}.
\end{equation*}
Then the following inequality holds:
\begin{equation}\label{OnofriIn}
    \int_{\R^2} e^\psi H \dd x \leq \exp \Big (\int_{\R^2} \psi H \dd x + \frac{1}{16\pi}\int_{\R^2} |\nabla\psi|^2 \dd x   \Big )
    \qquad \forall \psi \in \H^1(\R^2).
\end{equation}
\end{lemma}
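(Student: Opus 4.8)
The plan is to derive \eqref{OnofriIn} from the classical Onofri inequality on the round sphere $S^2$ via stereographic projection; the mechanism that makes this work, and that transports the sharp constant $1/(16\pi)$ without loss, is the conformal invariance of the Dirichlet integral in dimension two. Concretely, writing $dV$ for the standard surface measure on $S^2\subset\R^3$ (so $\int_{S^2}dV=4\pi$), the inequality of \cite{O82} states that
\[
\log\Big(\frac{1}{4\pi}\int_{S^2}e^{u}\,dV\Big)\le\frac{1}{4\pi}\int_{S^2}u\,dV+\frac{1}{16\pi}\int_{S^2}|\nabla_{S^2}u|^2\,dV,\qquad u\in\H^1(S^2),
\]
the normalisation of the constant being the one which, upon linearisation, is compatible with the sharp spectral gap $\int_{S^2}|\nabla_{S^2}u|^2\,dV\ge 2\int_{S^2}u^2\,dV$ for $u$ of zero average.

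Next I would carry out the change of variables. Let $\sigma:S^2\setminus\{N\}\to\R^2$ be the stereographic projection from the north pole $N$; its inverse pulls the round metric back to $\tfrac{4}{(1+|x|^2)^2}(dx_1^2+dx_2^2)$ on $\R^2$, so that the surface measure corresponds to $\tfrac{4}{(1+|x|^2)^2}\,\dd x$ and hence $H\,\dd x=\tfrac{1}{4\pi}\,dV$ under this identification, consistently with $\int_{\R^2}H\,\dd x=1$. Given $\psi\in\H^1(\R^2)$, set $u:=\psi\circ\sigma$. Then $u\in\L^2(S^2)$ because $\int_{S^2}u^2\,dV=\int_{\R^2}\psi^2\,\tfrac{4}{(1+|x|^2)^2}\,\dd x\le 4\|\psi\|_{\L^2(\R^2)}^2$, and the conformal invariance of the Dirichlet integral gives $\int_{S^2}|\nabla_{S^2}u|^2\,dV=\int_{\R^2}|\nabla\psi|^2\,\dd x$; since a point has zero capacity in a two-dimensional manifold, this shows $u\in\H^1(S^2)$, and conversely every element of $\H^1(S^2)$ arises in this way. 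Substituting $u=\psi\circ\sigma$ one gets
\[
\frac{1}{4\pi}\int_{S^2}e^{u}\,dV=\int_{\R^2}e^{\psi}H\,\dd x,\qquad\frac{1}{4\pi}\int_{S^2}u\,dV=\int_{\R^2}\psi H\,\dd x,
\]
and inserting these identities, together with the one for the Dirichlet term, into the sphere inequality and exponentiating yields exactly \eqref{OnofriIn}.

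It remains to dispose of a few routine points, and I do not expect any of them to be a genuine obstacle. The right-hand side of \eqref{OnofriIn} is finite for every $\psi\in\H^1(\R^2)$: indeed $\psi H\in\L^1(\R^2)$ by Cauchy--Schwarz since $H\in\L^2(\R^2)$, and $|\nabla\psi|\in\L^2(\R^2)$. The integrability $e^{\psi}H\in\L^1(\R^2)$, equivalently $e^{u}\in\L^1(S^2)$, follows from the Moser--Trudinger inequality on $S^2$ applied to $u\in\H^1(S^2)$, after the elementary bound $u\le\delta+u^2/(4\delta)$ with $\delta$ small. Finally, the passage from smooth functions to all of $\H^1(S^2)$ (equivalently to all $\psi\in\H^1(\R^2)$) is a standard density argument, using Fatou's lemma on the exponential term and continuity of the remaining two terms. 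The only genuinely hard ingredient is the sharp Onofri inequality on $S^2$ itself, which is quoted from \cite{O82}; everything else is bookkeeping of constants and measures, the essential structural point being that two-dimensional conformal invariance preserves the factor $1/(16\pi)$.
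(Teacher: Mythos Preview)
The paper does not prove this lemma; it simply states it and attributes it to \cite[Lemma~3.1]{CC08}, noting that it ``is a consequence of the Onofri inequality on the sphere~\cite{O82}.'' Your proposal carries out precisely that derivation---pulling back the sharp Onofri inequality on $S^2$ through stereographic projection, using conformal invariance of the Dirichlet energy in dimension two to preserve the constant $1/(16\pi)$---and the bookkeeping of measures and constants is correct. So your argument is sound and matches exactly the route the paper points to but does not spell out.
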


\

We can now make use of this inequality to obtain the lower bound of the functional in \eqref{eq:functional}.

\begin{lemma}[Lower bound on $\EE$]\label{tintin}
Let $0<\chi < 8\pi$ and $\alpha> 0$. Then there exist constants $\nu>0$ and $C_1>0$ such that $\nu$ is independent of $\alpha$ and it holds
\begin{equation}\label{eq:38}
\begin{aligned}
    \EE[\rho,\phi] \ge & \,\frac{8\pi - \chi}{16 \pi\chi} \int_{\RR^2}  \rho |\log \rho| \dd x +\nu \left[\|\nabla \phi\|_{{\rm L}^2(\R^2)}^2 + \alpha \|\phi\|_{{\rm L}^2(\R^2)}^2 \right] \\
    & + \frac{3}{2\chi}\int_{\R^2}\rho\log H\dd x-C_1 \,,
    \qquad \forall \; (\rho,\phi)\in \PP({\R^2})\times \HH^1(\R^2).
\end{aligned}
\end{equation}
\end{lemma}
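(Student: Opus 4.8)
The strategy is to separate the coupling term $-\int \rho\,\phi\,\dd x$ from the entropy, absorbing part of the entropy's good sign to control $\phi$ via the Onofri inequality. First I would split the entropy coefficient: write $\frac1\chi\rho\log\rho = \frac{1}{8\pi}\rho\log\rho + \left(\frac1\chi-\frac1{8\pi}\right)\rho\log\rho$, so that the first piece is exactly tuned to the Onofri constant $1/(16\pi)$ once we rescale, and the second piece is nonnegative (since $\chi<8\pi$) and will produce the $\frac{8\pi-\chi}{16\pi\chi}$ contribution after we handle signs. The key manipulation is to introduce the reference density $H$ and write $\rho\log\rho = \rho\log\frac{\rho}{H} + \rho\log H$; the relative entropy $\int \rho\log(\rho/H)\,\dd x$ is what interacts with Onofri, and the term $\int\rho\log H\,\dd x$ is carried along explicitly (it appears on the right-hand side of \eqref{eq:38}).

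Next I would apply the Onofri inequality \eqref{OnofriIn} to an appropriate test function built from $\phi$. The natural choice is $\psi$ proportional to $8\pi\phi$ (or $\psi = a\phi$ with $a$ chosen so that the gradient term matches): with $\psi = 8\pi\,\phi$ one gets $\frac{1}{16\pi}\|\nabla\psi\|_2^2 = 4\pi\|\nabla\phi\|_2^2$, which is more than we can afford, so instead pick $\psi = c\,\phi$ with $c$ small enough that $\frac{c^2}{16\pi}\|\nabla\phi\|_2^2$ leaves a positive fraction $\nu\|\nabla\phi\|_2^2$ of the Dirichlet energy $\frac12\|\nabla\phi\|_2^2$ to spare. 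Taking the logarithm of the Onofri inequality and using Jensen's inequality together with the probability measure $\rho\,\dd x$ (note $\int H\,\dd x = 1$), one obtains a bound of the form
\begin{equation*}
\int_{\R^2} c\,\phi\,\rho\,\dd x \le \log\int_{\R^2} \rho\log\frac{\rho}{H}\,\dd x + \text{(something)} + \frac{c^2}{16\pi}\|\nabla\phi\|_2^2,
\end{equation*}
more precisely the standard consequence $\int \rho\log\frac{\rho}{H}\,\dd x \ge \int \psi\,\rho\,\dd x - \log\int e^\psi H\,\dd x \ge \int\psi\,\rho\,\dd x - \int\psi H\,\dd x - \frac{1}{16\pi}\|\nabla\psi\|_2^2$. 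Combining this with the entropy splitting controls $\int\rho\,\phi\,\dd x$ by a small multiple of the relative entropy plus a small multiple of $\|\nabla\phi\|_2^2$ plus a term $\int \psi H\,\dd x$ which is bounded by $\|\phi\|_{L^2}$ (using that $H\in L^2$) and can be absorbed into $\frac\alpha2\|\phi\|_2^2$ up to a constant $C_1$ via Young's inequality.

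Assembling the pieces: the $\frac12\|\nabla\phi\|_2^2 + \frac\alpha2\|\phi\|_2^2$ in $\EE$ splits into the retained part $\nu[\|\nabla\phi\|_2^2 + \alpha\|\phi\|_2^2]$ plus a remainder that exactly cancels the terms coming from Onofri; the full entropy $\frac1\chi\int\rho\log\rho\,\dd x$ splits into $\frac{8\pi-\chi}{16\pi\chi}\int\rho|\log\rho|\,\dd x$ (here one must pass from $\rho\log\rho$ to $\rho|\log\rho|$, which costs only a constant since $\int_{\{\rho<1\}}\rho|\log\rho|\,\dd x$ is bounded using the finite second moment — this is where the assumption $\rho\in\PP(\R^2)$ enters, via $\int\rho\log(1+|x|^2)\,\dd x \le \int\rho|x|^2\,\dd x$... actually one needs to be a little careful here and may instead bound $\int_{\{\rho<1\}}\rho|\log\rho| \le \frac{2}{e} + \int \rho\log(1+|x|^2)\,\dd x$, absorbing the moment via the $\log H$ term which is already on the right-hand side), plus the residual relative-entropy term absorbed against $\int\rho\,\phi$. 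The coefficient $\frac32$ in front of $\int\rho\log H\,\dd x$ and the precise value of $\nu$ come out of bookkeeping the split fractions. The main obstacle is the delicate choice of the three small parameters — the fraction of entropy tuned to Onofri, the scaling $c$ of the test function, and the fraction of $\|\phi\|_{\HH^1}^2$ retained — so that all remainders have a definite sign or are absorbed, while keeping $\nu$ independent of $\alpha$; this is essentially the content of \cite[Lemma~3.1]{CC08} and requires carefully tracking that the $\alpha$-dependence only appears through the term $\frac\alpha2\|\phi\|_2^2$, which dominates the $\alpha$-independent error $\int\psi H\,\dd x$ uniformly.
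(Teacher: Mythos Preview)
Your approach is essentially the same as the paper's: split off a fraction $\delta/\chi$ of the entropy (the paper takes $\delta=(8\pi-\chi)/(16\pi)$, which gives exactly the coefficient $\frac{8\pi-\chi}{16\pi\chi}$), combine the remaining $(1-\delta)/\chi$ fraction with the coupling term via Jensen's inequality and Onofri's inequality applied to $\psi=\chi\phi/(1-\delta)$, absorb $\int\phi H\,\dd x$ into $\frac{\alpha}{2}\|\phi\|_2^2$ by Young's inequality (the $\alpha$-dependence goes into $C_1$, not into $\nu$), and finally convert $\rho\log\rho$ to $\rho|\log\rho|$ via the Carleman estimate~\eqref{eq:carleman}, which produces the extra $\int\rho\log H$ contribution and explains the coefficient $\frac{3}{2\chi}=\frac{1-\delta}{\chi}+\frac{2\delta}{\chi}$ since $\delta<1/2$. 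A small slip: the leftover piece $\left(\frac1\chi-\frac1{8\pi}\right)\int\rho\log\rho$ is not nonnegative (since $\rho\log\rho$ changes sign), and this is precisely why the Carleman step is needed rather than optional --- but you do invoke it later, so the argument goes through.
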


%----------------------------
\begin{proof}
The proof follows the lines of \cite[Theorem~3.2]{CC08}.
Let $\delta \in (0,1)$ be a constant to be choosen later.
By Jensen's inequality, for all $\psi:\R^2\to\R$
\begin{equation*}
  \begin{aligned}
     \int_{\RR^2} \left[\frac{1-\delta}{\chi}\rho \log \rho -\psi\rho \right] \dd x 
&= - \frac{1-\delta}{\chi}\int_{\RR^2} \log \left(\frac{\ee^{\chi\psi/(1-\delta)}}{\rho} \right) \rho \dd x \nonumber\\
&\ge- \frac{1-\delta}{\chi}\log \left(\int_{\R^2} \ee^{\chi\psi/(1-\delta)} \dd x\right) \;.
  \end{aligned}
\end{equation*}
Applying this inequality to $\psi=\phi + (1-\delta)\log H/\chi$ we obtain
\begin{equation*}
  \begin{aligned}
   \int_{\RR^2}  \left[\frac{1-\delta}{\chi}\rho \log \rho -\phi\rho \right]\dd x = &\, \int_{\RR^2}  \left[\frac{1-\delta}{\chi}\rho \log \rho -\psi\rho \right]\dd x + \int_{\R^2}\rho(\psi-\phi)\dd x\\
\ge&\,- \frac{1-\delta}{\chi}\log \left(\int_{\R^2} H\ee^{\chi\phi/(1-\delta)} \dd x\right)\\
&+\frac{1-\delta}{\chi}\int_{\R^2}\rho\log H\dd x .
  \end{aligned}
\end{equation*}
By Onofri's inequality \eqref{OnofriIn} we obtain, for any $\eps>0$,
\begin{equation*}
  \begin{aligned}
   \int_{\RR^2}  \left[\frac{1-\delta}{\chi}\rho \log \rho -\phi\rho \right]\dd x \ge &\,- \frac{1-\delta}{\chi}\left(\int_{\R^2} \frac{\chi\,\phi}{1-\delta}H\dd x + \frac{\chi^2 \|\nabla \phi\|_{{\rm L}^2(\R^2)}^2}{16\,\pi (1-\delta)^2}\right)\\
& +\frac{1-\delta}{\chi}\int_{\R^2}\rho\log H\dd x\\
\ge&\, - \int_{\R^2} \phi\,H\dd x - \frac{\chi \|\nabla \phi\|_{{\rm L}^2(\R^2)}^2}{16\,\pi (1-\delta)}\\
&+\frac{1-\delta}{\chi}\int_{\R^2}\rho\log H\dd x\\
\ge&\, - \frac{\eps}{2}\|\phi\|_{{\rm L}^2(\R^2)}^2 - \frac{1}{2\eps}\|H\|_{{\rm L}^2(\R^2)}^2 - \frac{\chi \|\nabla \phi\|_{{\rm L}^2(\R^2)}^2}{16\,\pi (1-\delta)}\\
&+\frac{1-\delta}{\chi}\int_{\R^2}\rho\log H\dd x .
  \end{aligned}
\end{equation*}
Choosing $\eps=\alpha\chi/(8\,\pi(1-\delta))>0$, we obtain
\begin{multline*}
  \EE[\rho,\phi] \ge \frac\delta\chi \int_{\RR^2}  \rho \log \rho \dd x - \frac1{2\eps}\|H\|_{{\rm L}^2(\R^2)}^2 + \nu \left[\|\nabla \phi\|_{{\rm L}^2(\R^2)}^2 + \alpha \|\phi\|_{{\rm L}^2(\R^2)}^2 \right] \\+ \frac{1-\delta}{\chi}\int_{\R^2}\rho\log H\dd x 
\end{multline*}
with $\nu:=1/2-{\chi}/{(16\,\pi (1-\delta))}$.
By Carleman's estimate~\eqref{eq:carleman}
\begin{equation*}
\begin{aligned}
    \EE[\rho,\phi] \ge &\,\frac\delta\chi \int_{\RR^2}  \rho |\log \rho| \dd x + \frac{2\delta}\chi\int_{\RR^2}  \rho\log H\dd x -\frac{2\delta}{\ee\chi} - \frac{4\pi(1-\delta)}{\alpha\chi}\|H\|_{{\rm L}^2(\R^2)}^2  \\
&+ \nu\left[\|\nabla \phi\|_{{\rm L}^2(\R^2)}^2 + \alpha \|\phi\|_{{\rm L}^2(\R^2)}^2 \right]+ \frac{1-\delta}{\chi}\int_{\R^2}\rho\log H\dd x\\
= &\, \frac\delta\chi \int_{\RR^2}  \rho |\log \rho| \dd x +\nu \left[\|\nabla \phi\|_{{\rm L}^2(\R^2)}^2 + \alpha \|\phi\|_{{\rm L}^2(\R^2)}^2 \right] + \frac{1+\delta}{\chi}\int_{\R^2}\rho\log H\dd x\\
&-\frac{2\delta}{\ee\chi} - \frac{4\pi(1-\delta)}{\alpha\chi}\|H\|_{{\rm L}^2(\R^2)}^2 .
\end{aligned}
\end{equation*}
Since $\chi<8\pi$ we can take $\delta=(8\pi - \chi)/(16\pi)$.
Observing that $\delta<1/2$, \eqref{eq:38} follows with $\nu=(8\pi -\chi)/(16\pi +2\chi)>0$
and $C_1={8\pi -\chi}/{(8\pi\ee\chi)} + {8\pi+\chi}/({4\alpha\chi})\,\|H\|_2^2$.
\end{proof}

% \begin{remark}%\label{rem alpha=0}
% When $\alpha=0$ we could not prove that $\mathcal E_0(\rho, u)$ is bounded below. The rest of the paper does not requiere any assumption on $\alpha$.}
% \end{remark}

%%%%%%%%%%%%%%%%%%%%%%%%%%%%%%%%%%%%%%%%%%%

\section{One Step Variational Problem}\label{mini}

\subsection{Existence of minimizers}
%----------------------------
\begin{proposition}[Existence of minimizers]\label{prop:lsc}
If $0<\chi < 8\pi$ and $\alpha>0$ then for any $(\bar \rho, \bar \phi)\in \KK\times \HH^1(\RR^2)$ the functional
  \begin{equation*}
   \FF[\rho,\phi] := \frac{1}{2h}\ \left[ \frac{d_W^2(\rho,\bar \rho)}{\chi} + \tau\ \|\phi-\bar \phi\|_{{\rm L}^2(\R^2)}^2 \right] + \EE[\rho,\phi]\,
\end{equation*}
is bounded from below in $\PP({\R^2})\times \HH^1(\R^2)$ and sequentially lower semi-continuous with respect to the narrow topology in $\PP({\R^2})$ and the weak topology in $\LL^2(\R^2)$ for all $h,\tau>0$. Moreover the sub-levels of $\FF$ are sequentially compact with respect to those same topologies. In particular, the functional $\FF$ admits a minimizer in $\PP({\R^2})\times \HH^1(\R^2)$.
\end{proposition}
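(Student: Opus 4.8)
The plan is to establish the three assertions — boundedness from below, lower semi-continuity, and compactness of sub-levels — in that order, with the existence of a minimiser following from the direct method.

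\textbf{Boundedness from below.} This is essentially immediate from Lemma~\ref{tintin}. Since $d_W^2(\rho,\bar\rho)/\chi$ and $\tau\|\phi-\bar\phi\|_{\LL^2}^2$ are non-negative, we have $\FF[\rho,\phi]\ge\EE[\rho,\phi]$, and Lemma~\ref{tintin} bounds $\EE$ from below by $\frac{8\pi-\chi}{16\pi\chi}\int\rho|\log\rho|\dd x + \nu[\|\nabla\phi\|_{\LL^2}^2+\alpha\|\phi\|_{\LL^2}^2] + \frac{3}{2\chi}\int\rho\log H\dd x - C_1$. The only term that is not manifestly bounded below is $\int\rho\log H\dd x$; since $\log H(x) = -\log\pi - 2\log(1+|x|^2)$ behaves like $-4\log|x|$ at infinity, it is controlled by the second moment via $\log(1+|x|^2)\le 1+|x|^2$, so $\int\rho\log H\dd x\ge -C(1+\int|x|^2\rho\dd x)$. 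Thus I would first record the elementary bound
\begin{equation*}
\FF[\rho,\phi]\ge \frac{8\pi-\chi}{16\pi\chi}\int_{\RR^2}\rho|\log\rho|\dd x - \frac{C}{\chi}\Big(1+\int_{\RR^2}|x|^2\rho\dd x\Big) + \nu\|\phi\|_{\HH^1}^2\cdot\min\{1,\alpha\} - C_1,
\end{equation*}
and then absorb the second-moment term: since only the $d_W$ term in $\FF$ controls the second moment (through $\int|x|^2\rho\dd x\le 2d_W^2(\rho,\bar\rho)+2\int|x|^2\bar\rho\dd x$ and $\bar\rho\in\KK\subset\PP(\R^2)$ has finite second moment), we get $\FF[\rho,\phi]\ge -C(h,\bar\rho,\bar\phi,\chi,\tau,\alpha)$ after moving a fraction of the $d_W^2$ term to dominate it. This simultaneously shows that on a sub-level $\{\FF\le c\}$ all of $\int\rho|\log\rho|\dd x$, $\int|x|^2\rho\dd x$, $d_W^2(\rho,\bar\rho)$, and $\|\phi\|_{\HH^1}^2$ are bounded.

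\textbf{Compactness of sub-levels.} From the uniform bounds just derived: a sequence $(\rho_k,\phi_k)$ in a sub-level has uniformly bounded second moments, hence $(\rho_k)$ is tight and, up to a subsequence, converges narrowly to some $\rho\in\PP(\R^2)$; the uniform bound on $\int\rho_k|\log\rho_k|\dd x$ together with Proposition~\ref{prop:lscIF} (applied with $\gamma_k=\dd x$, $f(s)=s\log s + e^{-1}$ or $|s\log s|$ suitably convexified) guarantees $\rho\ll\dd x$ with $\int\rho|\log\rho|\dd x<\infty$, i.e. $\rho\in\KK$. The uniform bound on $\|\phi_k\|_{\HH^1}$ gives, up to a further subsequence, $\phi_k\rightharpoonup\phi$ weakly in $\HH^1(\R^2)$, hence weakly in $\LL^2(\R^2)$ and (Rellich) strongly in $\LL^2_{\rm loc}$. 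So the sub-level is sequentially compact in the stated topologies.

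\textbf{Lower semi-continuity.} Take $(\rho_k,\phi_k)\to(\rho,\phi)$ narrowly and weakly-$\LL^2$; I may assume $\sup_k\FF[\rho_k,\phi_k]<\infty$, so the above bounds hold. The Wasserstein term is lower semi-continuous under narrow convergence with bounded second moments (a standard fact, e.g. from \cite{AGS08}), and $\|\cdot-\bar\phi\|_{\LL^2}^2$ is weakly lower semi-continuous; it remains to treat $\EE$. The entropy $\int\rho\log\rho\dd x$ and the terms $\frac12\|\nabla\phi\|_{\LL^2}^2+\frac\alpha2\|\phi\|_{\LL^2}^2$ are lower semi-continuous (the former by Proposition~\ref{prop:lscIF}, using the additional term $\frac{3}{2\chi}\int\rho\log H\dd x$ to handle the lack of sign — write $\frac1\chi\rho\log\rho = \frac{1+\delta}{\chi}\rho\log\rho + \frac{3}{2\chi}\rho\log H$-type splitting as in the proof of Lemma~\ref{tintin}, so that the whole combination is bounded below and l.s.c., and the moment bound handles $\int\rho\log H$; the latter two by weak l.s.c. of the $\HH^1$ seminorm and norm). \textbf{The one genuinely delicate term is the coupling} $-\int\rho\phi\dd x$. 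Here I would use that $\phi_k\to\phi$ strongly in $\LL^p_{\rm loc}$ for all $p<\infty$ together with the uniform $\HH^1$ bound, which by Gagliardo–Nirenberg/Moser–Trudinger-type estimates gives equi-integrability of $(\phi_k)$ against the measures $\rho_k\dd x$ (using that $\int\rho_k|\log\rho_k|\dd x$ is bounded and the Onofri/log-HLS duality: $\int\rho_k|\phi_k|\dd x$ is controlled by the entropy of $\rho_k$ plus $\|\phi_k\|_{\HH^1}^2$, uniformly). With tightness of $\rho_k$ and strong local convergence of $\phi_k$, one passes to the limit $\int\rho_k\phi_k\dd x\to\int\rho\phi\dd x$ by splitting into a large ball (where strong $\LL^2_{\rm loc}$ convergence of $\phi_k$ and narrow convergence of $\rho_k$, plus equi-integrability, suffice) and its complement (controlled by tightness and the equi-integrability estimate). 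Combining, $\liminf_k\EE[\rho_k,\phi_k]\ge\EE[\rho,\phi]$, hence $\liminf_k\FF[\rho_k,\phi_k]\ge\FF[\rho,\phi]$.

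\textbf{Existence.} Take a minimising sequence; its values are eventually in a sub-level, which is sequentially compact, so a subsequence converges to some $(\rho_*,\phi_*)\in\KK\times\HH^1(\R^2)$, and lower semi-continuity gives $\FF[\rho_*,\phi_*]\le\liminf\FF[\rho_k,\phi_k]=\inf\FF$, so $(\rho_*,\phi_*)$ is a minimiser.

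\textbf{Main obstacle.} The crux is the lower semi-continuity of the coupling term $-\int\rho\phi\dd x$, since $\rho$ is merely an $\LL\log\LL$ density and $\phi$ merely $\HH^1$: neither narrow convergence of $\rho_k$ nor weak $\LL^2$ convergence of $\phi_k$ alone suffices, and the product need not even be integrable a priori. The resolution rests on the logarithmic Hardy–Littlewood–Sobolev / Onofri duality already exploited in Lemma~\ref{tintin}, which both makes $\int\rho_k\phi_k\dd x$ well-defined with a uniform bound and yields the equi-integrability needed to pass to the limit.
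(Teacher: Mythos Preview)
Your overall architecture---lower bound via Lemma~\ref{tintin} plus absorption of the $\log H$ term by the Wasserstein penalty, then compactness from the resulting a~priori bounds, then lower semicontinuity term by term---matches the paper. (One small caveat: your inequality $\log(1+|x|^2)\le 1+|x|^2$ gives a second-moment term with a fixed coefficient, which you can only absorb into $\tfrac{1}{2h\chi}d_W^2(\rho,\bar\rho)$ for $h$ small; to cover all $h>0$ you need the sharper observation, as in the paper, that $s\mapsto \tfrac{s}{4h}-\tfrac{3}{\chi}\log(1+s)$ is bounded below for every $h$.)

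The substantive divergence is in the lower semicontinuity of the entropy--coupling block $\tfrac{1}{\chi}\int\rho\log\rho-\int\rho\phi$. You propose to treat $-\int\rho\phi$ separately and prove \emph{continuity} of $\int\rho_k\phi_k$ by a ball/complement splitting with strong $\L^2_{\rm loc}$ convergence of $\phi_k$, narrow convergence of $\rho_k$, and Onofri/Moser--Trudinger equi-integrability. The paper does \emph{not} attempt this. Instead it merges the two terms into a single relative entropy,
\[
\tfrac{1}{\chi}\!\int\!\rho_k\log\rho_k-\!\int\!\rho_k\phi_k
=\tfrac{1}{\chi}\!\int\!\Big[\tfrac{d\mu_k}{d\gamma_k}\log\tfrac{d\mu_k}{d\gamma_k}+\tfrac{1}{e}\Big]d\gamma_k-\tfrac{1}{e\chi}\!\int\!e^{\chi\phi_k}H\,dx+\tfrac{1}{\chi}\!\int\!\rho_k\log H\,dx,
\]
with $\mu_k=\rho_k\,dx$ and $\gamma_k=e^{\chi\phi_k}H\,dx$, and then applies Proposition~\ref{prop:lscIF} to the first term after checking (via Onofri $+$ Vitali) that $\gamma_k$ converges narrowly to $\gamma=e^{\chi\phi}H\,dx$; the remaining two terms are shown to be \emph{continuous} (the second by the same Onofri/Vitali argument, the third via the second-moment bound and $\log H=o(|x|^2)$).

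Your route is plausible but, as written, incomplete: on the ball you are pairing weak-$\L^1$ convergence of $\rho_k$ with strong $\L^2_{\rm loc}$ convergence of $\phi_k$, yet neither $\rho_k\in\L^2$ nor $\phi_k\in\L^\infty$, so the product limit is not automatic; and on the complement ``tightness and equi-integrability'' does not obviously yield uniform smallness of $\int_{|x|>R}\rho_k|\phi_k|$, since the Moser--Trudinger bound on $\int(e^{\lambda|\phi_k|}-1)$ gives no tightness. (Compare Step~3 in the proof of Lemma~\ref{impro}, where an analogous product limit is carried out but crucially with \emph{strong} $\H^1$ convergence of the potential, so that $\|\Phi(t)-\phi_h^n\|_{\L^2}\to 0$ enters the Moser--Trudinger estimate.) The paper's relative-entropy rewriting bypasses all of this: it never isolates $\int\rho\phi$ and needs only \emph{joint} lower semicontinuity, which Proposition~\ref{prop:lscIF} supplies once the reference measures $\gamma_k$ are shown to converge narrowly.
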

%----------------------------
\begin{proof}
$\bullet$ Lower bound for $\FF$: By the Young Inequality and the definition of the distance $d_W$, 
%By the triangle inequality for the $\W_2$ distance, 
we have that
\begin{equation*}%\label{eq:Wm}
    d_W^2(\rho,\bar\rho)\geq \frac{1}{2}\int_{\RR^2} |x|^2 \rho(x)\dd x - \int_{\RR^2}|x|^2 \bar\rho(x)\dd x .
\end{equation*}
Since $ \log H(x)= - \log \pi - 2\log(1+|x|^2) $, we deduce
$$
\begin{aligned}%\label{eq:39}
    \frac{3}{2\chi}\int_{\RR^2} \rho\log H \dd x + \frac{1}{2h}d_W^2(\rho,\bar\rho)  \geq &\,
       \int_{\RR^2} \Big(\frac{1}{4h}|x|^2 -  \frac{3}{\chi}\log(1+|x|^2)\Big)\rho(x)\dd x \\
      &-\frac{3}{2\chi}\log \pi- \frac{1}{2h} \int_{\RR^2} |x|^2 \bar\rho(x)\dd x\;.
\end{aligned}
$$
This quantity is bounded from below because the function $s \in [0,\infty) \mapsto s/(4h) -  \frac{3}{\chi}\log(1+s)$ is bounded from below. Using Lemma \ref{tintin}, we have thus obtained that there exist $C_2=\nu\min\{1,\alpha\}>0$, $C_3=C_3(h) \in \R$ such that for all $(\rho,\phi) \in \PP_2({\R^2}) \times \HH^1(\R^2)$, we get
\begin{equation}\label{b}%\label{}
    \FF[\rho,\phi] \ge \frac{8\pi - \chi}{16 \pi \chi} \int_{\R^2}\rho |\log \rho| + C_2 \norm{\phi}^2_{\HH^1(\R^2)} + C_3 \; .
\end{equation}

\noindent$\bullet$ Lower semi-continuity of $\FF$: we take a sequence $(\rho_n)_{n \ge 1}\in\PP_2({\R^2})$ narrowly convergent to $\rho$ and $\phi_n\in \HH^1(\R^2)$ such that $(\phi_n)_{n \ge 1}$ weakly converges to $\phi$ with respect to the $\LL^2(\R^2)$-topology. Denoting by
$$\gamma_n = \ee^{\chi \phi_n}H \dd x, \quad \gamma = \ee^{\chi\phi}H \dd x, \quad \mu_n=\rho_n\dd x, \quad \mu=\rho\dd x$$
the functional $\FF$ can be rewritten in the following form
\begin{eqnarray}
    \FF[\rho_n,\phi_n] & = &\frac1\chi\int_{\R^2} \left[\frac{d\mu_n}{\dd\gamma_n} \log\left( \frac{\dd\mu_n}{\dd\gamma_n}\right) +\frac{1}{\ee}\right] \dd\gamma_n - \frac{1}{\ee\chi}\int_{\R^2} \ee^{\chi \phi_n}H\dd x\label{lsc1}\\
    &&\quad + \frac1\chi\int_{\R^2}\rho_n\log H \dd x + \frac{1}{2} \left( \|\nabla \phi_n\|_{{\rm L}^2(\R^2)}^2 + \alpha \|\phi_n\|_{{\rm L}^2(\R^2)}^2\right) \label{lsc3}\\
    &&\quad +\frac{1}{2h}\left(\frac{d_W^2(\rho_n,\bar\rho)}\chi+\tau\norm{\phi_n-\bar \phi}^2_{{\rm L}^2(\R^2)} \right) \label{lsc4}.
\end{eqnarray}
$*$ Lower semi-continuity of~\eqref{lsc1}: By \eqref{b} the sequence $(\phi_n)_{n \ge 1}$ is bounded in $\HH^1(\R^2)$, and after possibly extracting a sub-sequence, we may assume that
\begin{equation}\label{16b}
%(\phi_n)_{n \ge 1} \to \phi \mbox{ strongly in } \LL^2_{\rm loc}(\R^2); \qquad
(\phi_n(x))_{n \ge 1} \to \phi(x) \mbox{ for a.e. }x\in \R^2.
\end{equation}
We prove that $(\gamma_n)_{n \ge 1}$ narrowly converges to $\gamma$, {\it i.e.}  for all $\varphi \in \C_b(\R^2)$
\begin{equation}\label{eq:nc}
\lim_{n \to \infty}\int_{\R^2} \ee^{\chi\phi_n(x)}H(x)\varphi(x) \dd x = \int_{\R^2} \ee^{\chi\phi(x)}H(x)\varphi(x) \dd x .
\end{equation}
Indeed, $H \!\dd x$ is a finite measure in $\R^2$ and, by Onofri's inequality~\eqref{OnofriIn} and the boundedness of the $(\phi_n)_{n \ge 1}$ in $\HH^1(\R^2)$, we deduce
\begin{equation*}
  \begin{aligned}
    \int_{\R^2} \left(\ee^{\chi\phi_n} \varphi\right)^2 H \dd x &\le \norm{\varphi}^2_{{\rm L}^\infty(\R^2)} \int_{\R^2} \ee^{2\chi\phi_n}H\dd x \\
&\le \norm{\varphi}^2_{{\rm L}^\infty(\R^2)} \exp\left[2\chi\int_{\R^2} \phi_n H \dd x + \frac{\chi^2}{4\pi}\|\nabla \phi_n\|_{{\rm L}^2(\R^2)}^2 \right]\\
&\le \norm{\varphi}^2_{{\rm L}^\infty(\R^2)} \exp\left[2\chi\|\phi_n\|_{{\rm L}^2(\R^2)}\|H\|_{{\rm L}^2(\R^2)}  + \frac{\chi^2}{4\pi}\|\nabla \phi_n\|_{{\rm L}^2(\R^2)}^2 \right] \\ & \leq C.
  \end{aligned}
\end{equation*}
Therefore, $(\ee^{\chi\phi_n} \varphi)_{n \ge 1}$ is bounded in $\LL^2(\RR^2;H\!\dd x)$, and thus it is uniformly integrable with respect to the measure  $H\!\dd x$. Recalling~\eqref{16b} we may apply Vitali's Dominated Convergence Theorem and obtain~\eqref{eq:nc}. By Proposition \ref{prop:lscIF} applied to the non-negative convex function $f(s)=s\log s +1/\ee$ we obtain the lower semi-continuity of the right hand side of \eqref{lsc1}.

\noindent$*$ Lower semi-continuity of~\eqref{lsc3}: Since the lower semi-continuity property of $\FF$ is obvious if $\limsup_{n \to \infty}\FF[\rho_n,\phi_n] = \infty$, it is not restrictive to assume that there exists a constant $C$ such that $\FF[\rho_n,\phi_n] \leq C$. Combining the upper bound $\FF[\rho_n,\phi_n] \leq C$ with the lower bound~\eqref{eq:38} on $\EE$ we deduce that $(d_W^2(\rho_n,\bar \rho))_{n \ge 1}$ is bounded so that $(\rho_n)_{n \ge 1}$ is bounded in $\PP_2(\R^2)$. Since $\log H=o(|x|^2)$ as $|x| \to \infty$, this last bound and the narrow convergence imply the convergence
$$\lim_{n \to \infty}\int_{\R^2}\rho_n\log H \dd x = \int_{\R^2} \rho\log H\dd x\ .$$

By \eqref{b} the sequence $(\phi_n)_{n \ge 1}$ is bounded in $\HH^1(\R^2)$, and after possibly extracting a sub-sequence, we may assume that it converges weakly to $\phi$ in $\HH^1(\R^2)$. Thus, the lower semicontinuity of the last two terms in \eqref{lsc3} are obvious.

\noindent$*$ Lower semi-continuity of~\eqref{lsc4}: it follows from the lower semi-continuity of the Wasserstein distance and the lower semi-continuity of the $\LL^2$ norm.
\end{proof}

%%%%%%%%%%%%%%%%%%%%%%%

\subsection{Improved regularity of the minimizers}

\subsubsection{Matthes-McCann-Savar\'e flow interchange technique}\label{sec:MMS}
We will use a variation of a powerful method developed by Matthes-McCann-Savar\'e in~\cite{MMS09}.

We denote by $X$ the metric space $\PP(\R^2)\times \L^2(\R^2)$ endowed with the metric
\begin{equation}\label{def:d}
d^2(u_1,u_2)= \frac{1}{\chi}d_W^2(\rho_1,\rho_2) + \tau\ \|\phi_1-\phi_2\|_{{\rm L}^2(\R^2)}^2,
\end{equation}
where $u_i=(\rho_i,\phi_i)$, $i=1,2$.

The scheme \eqref{scheme:jko} can be rephrased as
\begin{equation}\label{num22}
  \mbox{$(\rho_h^n,\phi_h^n)=u_{h}^{n}$ minimises in $X$ the functional} \quad u \mapsto \frac{1}{2 h}d^2(u,u_h^{n-1})+\EE[u]\;,
\end{equation}
for all $n \ge 1$ and $h>0$ starting from $u_h^0=(\rho_0,\phi_0)$.

Assume that $\V:X\to (-\infty,+\infty]$ is a proper lower semi continuous functional
that admits a continuous semigroup $(S^{\V}_t)_{t\ge 0}$ in $Dom(\V)$
satisfying the following \emph{Evolution Variational Inequality} (EVI)
  \begin{equation}\label{EVIgen}
   \frac12\frac{d^2(S^{\V}_t(u),\bar u)-d^2(u,\bar u)}t
    + \V[S^{\V}_t(u)] \leq \V[\bar u] , \qquad  u,\bar u\in Dom(\V), \;  t>0.
  \end{equation}
The dissipation of $\EE$ along the
 flow $(S^{\V}_t)_{t\ge 0}$ associated to $\V$ is defined by
\begin{equation*}
 D^\V \EE[u]:= \limsup_{t \downarrow 0} \frac{\EE[u]-\EE[S_t^{\V}(u)]}{t}\;.
\end{equation*}
By the minimising scheme \eqref{num22}, for any $u \in Dom(\V)$ we have
\begin{equation*}
  \frac{1}{2h}d^2(u_h^n,u_h^{n-1})+\EE[u_h^{n}] \le \frac{1}{2h}d^2(u,u_h^{n-1})+\EE[u]\;.
\end{equation*}
Choosing $u=S^\V_t u_h^{n}$ for $t >0$, and dividing by $t$ we obtain
\begin{equation*}
 \frac{\EE[u_h^n]-\EE[S^\V_t (u_h^n)]}{t} \le \frac{1}{2h} \left[\frac{d^2(S^\V_t (u_h^n),u_h^{n-1})- d^2(u_h^n,u_h^{n-1})}{t} \right]\;.
\end{equation*}
As $\V$ satisfies \eqref{EVIgen} we have
\begin{equation}\label{v 1}
 \frac{\EE[u_h^n]-\EE[S^\V_t (u_h^n)]}{t}  \le \frac{\V[u_h^{n-1}]-\V[S^\V_t (u_h^n)]}{h}\;.
\end{equation}
Since $\V$ is lower semi continuous, passing to the limit $t\to 0$ we obtain
\begin{equation}\label{mms09}
D^\V \EE[u_h^n]\le   \frac{\V[u_h^{n-1}] -\V[u_h^n]}{h} \;.
\end{equation}
So that the differential estimate~\eqref{v 1} is converted into the discrete estimate~\eqref{mms09} for the approximation scheme~\eqref{num22},
which could provide additional information on $u_h^n$ according to the properties of $D^\V \EE$.
In particular when $D^\V \EE \ge 0$ we can expect to control $u^n_h$ by the prior state $u^{n-1}_h$ if $\V[u^{n-1}_h]<+\infty$, which is the situation dealt with  in~\cite{MMS09}.
In our case we do not have the  nice property $D^\V \EE \ge 0$  but
we can decompose $D^\V \EE$ into a positive contribution and a controlled remainder (see Lemma~\ref{impro}).

\subsubsection{Regularity of minimizers}
%The argument in this and the following section does not depend in any significant way on whether $\alpha>0$ or $\alpha=0$. To fix attention we assume $\alpha>0$ from now on.
As already mentioned in the introduction we turn to additional regularity properties of $\rho$ and $\phi$ using the method introduced above.
We define the functional $\V:\PP_2(\R^2)\times \L^2(\R^2)\to(-\infty,+\infty]$
by
\begin{equation*}%\label{def:Vfunctional}
\V[\rho,\phi]= \frac{1}{\chi}\int_{\RR^2} \rho(x)\log\rho(x)\dd x + \frac{\tau}{2}\int_{\RR^2}\left[|\nabla \phi(x)|^2+\alpha\,\phi(x)^2\right]\dd x \;,
\end{equation*}
if $(\rho,\phi)\in\KK\times \H^1(\R^2)$ and $\V[\rho,\phi]= +\infty$ otherwise.

It is well known that $\V$ is lower semi continuous with respect to the narrow topology in $\rho$ and the $\L^2$ weak topology in $\phi$.
Moreover, $\V$ generates a continuous semigroup in $Dom(\V)=\KK\times \H^1(\R^2)$ satisfying the EVI \eqref{EVIgen} (see \cite{AGS08}).

\begin{lemma}[Improved regularity of minimizers]\label{impro}
 Consider the sequence of minimizers $(\rho_h^n,\phi_h^n)$ and
 let $\Lambda$ satisfy $$\Lambda \ge \int_{\R^2} \rho_h^n\log \rho_h^n\dd x +
 4\int_{\R^2} \rho_h^n\log(1+|x|^2)\dd x +\frac{2}{\ee}+2\log\pi+16.$$
Then  $\rho_h^n \in W^{1,1}(\RR^2)$, $\nabla\rho_h^n/\rho_h^n \in \L^2(\rho_h^n)$, $\phi_h^n \in \HH^2(\RR^2)$, and there exists a constant
$C(\Lambda)>0$ such that
\begin{align}\label{impreg}
 \frac{1}{2\chi} \int_{\R^2}\left|\frac{\nabla\rho_h^n}{\rho_h^n}\right|^2\rho_h^n\dd x + \frac{1}{2}\|\Delta \phi_h^n + \rho_h^n - \alpha \phi_h^n\|_2^2
 \le&\, \frac{1}{h}\left(\V(\rho_h^{n-1},\phi_h^{n-1}) - \V(\rho_h^{n},\phi_h^{n})\right) \nonumber\\
 & + C(\Lambda) + \frac{\alpha}{2} \|\phi_h^n\|_2^2\;.
\end{align}
\end{lemma}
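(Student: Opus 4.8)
The plan is to apply the flow-interchange machinery of Subsection~\ref{sec:MMS} with the specific choice $\V=\V[\rho,\phi]$ given by the sum of the Boltzmann entropy (weighted by $1/\chi$) and the $\HH^1$-energy of $\phi$ (weighted by $\tau/2$). Since $\V$ is exactly the part of $\EE$ that is \emph{not} bilinear, its gradient flow in $X$ decouples: the $\rho$-component evolves by the heat equation $\partial_s\rho=\Delta\rho$ (the Wasserstein gradient flow of the entropy), and the $\phi$-component evolves by $\tau\,\partial_s\phi=\Delta\phi-\alpha\phi$ (the $\LL^2$ gradient flow of the $\HH^1$-energy). Both are linear parabolic semigroups, so $(S^\V_t)_{t\ge0}$ is well defined on $Dom(\V)=\KK\times\HH^1(\R^2)$, is continuous, and satisfies the EVI~\eqref{EVIgen}; this is classical and I will cite~\cite{AGS08}. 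Hence inequality~\eqref{mms09} holds: $D^\V\EE[u_h^n]\le h^{-1}\big(\V[u_h^{n-1}]-\V[u_h^n]\big)$.

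The heart of the argument is the computation and lower bound of $D^\V\EE[u_h^n]$. Writing $u(s)=S^\V_s(u_h^n)=(\rho(s),\phi(s))$ and differentiating $\EE[\rho(s),\phi(s)]$ along the flow, the entropy part of $\EE$ contributes (via the heat flow) the Fisher information $\frac1\chi\int|\nabla\rho/\rho|^2\rho\,\dd x$, and the $\HH^1$-part of $\EE$ contributes $\frac1\tau\|\Delta\phi-\alpha\phi\|_2^2$; these are the two nonnegative terms on the left of~\eqref{impreg}, up to the factor $1/2$ which comes from writing the $\limsup$ of a difference quotient of a convex-decreasing quantity (or, more carefully, from an integrated de~Giorgi–type inequality giving at least half the instantaneous dissipation). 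The remaining, \emph{non-sign-definite}, contribution comes from the bilinear coupling term $-\int\rho\,\phi\,\dd x$ in $\EE$: its derivative along the flow is
\[
-\frac{\dd}{\dd s}\int_{\R^2}\rho(s)\phi(s)\,\dd x
= -\int_{\R^2}\big(\Delta\rho(s)\big)\phi(s)\,\dd x - \frac1\tau\int_{\R^2}\rho(s)\big(\Delta\phi(s)-\alpha\phi(s)\big)\,\dd x .
\]
Integrating by parts and using Young's inequality, each of these can be absorbed: $\int\Delta\rho\,\phi=\int\rho\,\Delta\phi$ is controlled by $\eta\|\Delta\phi\|_2^2+C_\eta\|\rho\|_2^2$, and one then needs $\|\rho(s)\|_2^2$ bounded in terms of the Fisher information and entropy. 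That last step is precisely the two-dimensional logarithmic Sobolev / Gagliardo–Nirenberg inequality $\|\rho\|_{\LL^2}^2\le C\big(\int|\nabla\sqrt\rho|^2\big)\big(1+\int\rho\log\rho\big)$ type estimate; this is where the bound $\Lambda$ on entropy plus second-moment enters and produces $C(\Lambda)$. Similarly $\frac1\tau\int\rho(\Delta\phi-\alpha\phi)$ is split off by Young, returning $\frac12\|\Delta\phi-\alpha\phi+\rho\|_2^2$ on the left after completing the square (which is why the left side of~\eqref{impreg} carries $\Delta\phi_h^n+\rho_h^n-\alpha\phi_h^n$ rather than just $\Delta\phi_h^n-\alpha\phi_h^n$), at the cost of the harmless term $\frac\alpha2\|\phi_h^n\|_2^2$ plus more of $C(\Lambda)$. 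Finally one lets $s\downarrow0$; the positive terms are lower semicontinuous (Fatou / l.s.c. of Fisher information and of $\LL^2$ norms), the coupling remainder converges, and the regularity conclusions $\rho_h^n\in W^{1,1}$, $\nabla\rho_h^n/\rho_h^n\in\LL^2(\rho_h^n)$, $\phi_h^n\in\HH^2$ follow from finiteness of the corresponding integrals (the $W^{1,1}$ bound coming from Cauchy–Schwarz: $\int|\nabla\rho|=\int|\nabla\rho/\rho|\sqrt\rho\,\sqrt\rho\le(\int|\nabla\rho/\rho|^2\rho)^{1/2}$).

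The main obstacle, as flagged in the introduction, is making the formal differentiation of $\EE$ along $(S^\V_s)$ rigorous — in particular justifying the integration by parts in the coupling term when $\rho(s)$ and $\phi(s)$ have only the regularity furnished by the heat flow for short positive times, and obtaining the crucial $\LL^2$-bound on $\rho(s)$ uniformly as $s\downarrow0$. I would handle this by first proving the estimate for the regularized flow (heat flow instantly regularizes $\rho_h^n$ into a smooth rapidly-decaying positive density and $\phi_h^n$ into $\HH^2$, for every $s>0$), deriving the differential inequality on $(0,s_0)$ with all terms finite, integrating it over $(0,t)$, dividing by $t$, and only then passing to $\limsup_{t\downarrow0}$, using lower semicontinuity to keep the good terms and continuity of $s\mapsto\int\rho(s)\phi(s)$ to evaluate the boundary term. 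The smoothing property of the two linear semigroups is what makes every integral in the intermediate steps legitimate, so no circularity arises.
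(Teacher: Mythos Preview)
Your strategy is the same as the paper's: flow interchange with the auxiliary functional $\V=\frac{1}{\chi}\int\rho\log\rho+\frac{\tau}{2}\int(|\nabla\phi|^2+\alpha\phi^2)$, whose gradient flow in $(X,d)$ decouples into the heat equation for $\rho$ and the damped heat equation for $\phi$; one differentiates $\EE$ along this flow, isolates the Fisher information plus $\|\Delta\phi+\rho-\alpha\phi\|_2^2$ as the ``good'' dissipation $\DD$, controls the remainder $\|\rho\|_2^2-\alpha\int\rho\phi$ via the Biler--Hebisch--Nadzieja inequality~\eqref{BHN} (this is where $\Lambda$ enters), and passes to the limit by lower semicontinuity.

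Two points of confusion, however. First, the factor $1/2$ on the left of~\eqref{impreg} has nothing to do with a de~Giorgi averaging or limsup of a difference quotient: it comes purely from the absorption step. After completing the square one has $D^\V\EE[u(t)]=\DD[u(t)]-\Re[u(t)]$ with $\Re[u(t)]=\|\sigma(t)\|_2^2-\alpha\int\sigma(t)\Phi(t)$, and~\eqref{BHN} with an appropriate $\varepsilon$ yields $\Re\le\frac12\DD+C(\Lambda)+\frac{\alpha}{2}\|\phi_h^n\|_2^2$; hence $\frac12\DD\le D^\V\EE+C(\Lambda)+\frac{\alpha}{2}\|\phi_h^n\|_2^2$. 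Second, the auxiliary $\phi$-flow is $\partial_t\Phi=\Delta\Phi-\alpha\Phi$ \emph{without} a $\tau$ (the factors of $\tau$ in $\V$ and in the metric cancel), so no $1/\tau$ appears in the dissipation computation; correspondingly the $\HH^1$-part of $\EE$ contributes $\|\Delta\Phi-\alpha\Phi\|_2^2$, not $\frac{1}{\tau}\|\Delta\Phi-\alpha\Phi\|_2^2$.

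On the passage to the limit: the paper does not integrate over $(0,t)$ and divide by $t$, but instead shows continuity of $t\mapsto\EE[u(t)]$ at $t=0$ (this is exactly the delicate continuity of $t\mapsto\int\sigma(t)\Phi(t)$ you identified, handled via Young's inequality and a Moser--Trudinger bound) and then applies the mean value theorem to produce $\theta(t)\in(0,t)$ with $\frac{\EE[u_h^n]-\EE[u(t)]}{t}=D^\V\EE[u(\theta(t))]$, combining this with the pre-limit inequality~\eqref{v 1}. The lower semicontinuity of the Fisher information as $\theta(t)\to0$ is not just Fatou: since $\sigma(\theta(t))$ converges only narrowly, the paper invokes the vector-field compactness result of Proposition~\ref{prop:vf} to identify the weak limit of $\nabla\sigma_k/\sigma_k$ with $\nabla\rho_h^n/\rho_h^n$ and obtain the liminf inequality. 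Your sketch is correct in spirit but should name this tool explicitly.
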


\begin{proof}
We use the notation $u_h^n=(\rho_h^n,\phi_h^n)$ and
 $u(t)=(\sigma(t),\Phi(t))=S^\V_t(\rho_h^n,\phi_h^n)=S^\V_t(u_h^n)$ for $t\ge 0$.
 The functions $\sigma$, $\Phi$ solve the equations
\begin{equation}\label{eq:i1}
    \begin{aligned}
   \partial_t \sigma &= \Delta \sigma  &\mbox{in } (0,\infty) \times \RR^2,\quad &\sigma(0)=\rho_h^n, \\
   \partial_t \Phi &= \Delta \Phi - \alpha \Phi &\mbox{in } (0,\infty) \times \RR^2, \quad &\Phi(0)=\phi_h^n,
    \end{aligned}
  \end{equation}
and satisfy the following identities:
\begin{equation*}
  \frac{\dd}{\dd t} \int_{\RR^2} \sigma(t)\log\sigma(t)\dd x = -  \int_{\R^2}\left|\frac{\nabla\sigma(t)}{\sigma(t)}\right|^2\sigma(t)\dd x
   >-\infty, \qquad\forall t>0,
\end{equation*}
\begin{equation*}
 \frac{1}{2} \frac{\dd}{\dd t} \int_{\RR^2}|\nabla \Phi(x)|^2+\alpha\,\Phi(x)^2\dd x  =
  - \int_{\RR^2} |\Delta \Phi(t)  - \alpha \Phi(t)|^2 \dd x >-\infty, \qquad \forall t>0.
\end{equation*}

\noindent{\bf Step 1 -} We give an estimate of $\|\sigma(t)\|_2^2$.
From the inequality \eqref{BHN}
\begin{equation*}
  \|\sigma(t)\|_2^2 \le {\eps}  \left\|\frac{\nabla\sigma(t)}{\sigma(t)}\right\|_{\L^2(\sigma(t))}^2\|\sigma(t)\log\sigma(t)\|_{{\rm L}^1(\R^2)} + L_{\eps}.
\end{equation*}
From Carleman's estimate~\eqref{eq:carleman}, we deduce
\begin{equation*}
  \begin{aligned}
    \|\sigma(t) \log \sigma(t) \|_1 &\le \int_{\R^2}\sigma(t)\log\sigma(t)\dd x +\frac2\ee +2\log\pi + 4 \int_{\R^2} \sigma(t)\log(1+|x|^2)\dd x.
  \end{aligned}
\end{equation*}
Since $ t\mapsto \int_{\R^2}\sigma(t)\log\sigma(t)\dd x$ is decreasing in $[0,+\infty)$ and
\begin{align*}
%  \label{eq:b}
  \frac{\dd}{\dd t}\int_{\R^2}\sigma(t)\log(1+|x|^2)\dd x &= \int_{\R^2}\sigma(t)\Delta(\log(1+|x|^2))\dd x\\ 
  &=  \int_{\R^2} \sigma(t) \frac{4}{(1 + |x|^2)^2} \dd x \le 4\;,
\end{align*}
we infer that
\begin{align*}
    \int_{\R^2}  \sigma(t)|\log \sigma(t)| \dd x &\le \int_{\R^2}  \sigma(t)\log \sigma(t) \dd x +\frac2\ee +2\log\pi + 4 \int_{\R^2} \sigma(t)\log(1+|x|^2)\dd x\\
& \le \int_{\R^2}\rho_h^k\log\rho_h^k \dd x +\frac2\ee + +2\log\pi + 4 \int_{\R^2} \rho_h^k\log(1+|x|^2)\dd x + 16t \\
&\le \Lambda\;
\end{align*}
%\begin{align*}
%    \|\sigma(t) \log \sigma(t) \|_1 &\le \int_{\R^2}  \sigma(t)\log \sigma(t) \dd x +\frac2\ee +2\log\pi + 4 \int_{\R^2} \sigma(t)\log(1+|x|^2)\dd x\\
%& \le \int_{\R^2}\rho_h^k\log\rho_h^k \dd x +\frac2\ee + +2\log\pi + 4 \int_{\R^2} \rho_h^k\log(1+|x|^2)\dd x + 16t \\
%&\le \Lambda\;
%\end{align*}
for $t\in (0,1]$.
We thus obtain that
\begin{equation}\label{eq:i4b}
  \begin{aligned}
  \|\sigma(t)\|_{{\rm L}^2(\R^2)}^2 & \le {\eps} \Lambda \left\|\frac{\nabla\sigma(t)}{\sigma(t)}\right\|_{\L^2(\sigma(t))}^2+ L_{\eps}.
  \end{aligned}
\end{equation}

\noindent{\bf Step 2 -} Instead of computing $D^\V \EE[u_h^n]$
we use the regularity properties for the solutions of the equations \eqref{eq:i1} and we compute $D^\V\EE[u(t)]$ for $t>0$.
In this case we claim that
\begin{equation}\label{dissut}
 D^\V\EE[u(t)] = \DD[u(t)]-\Re[u(t)],\qquad t>0
\end{equation}
where
$$
\DD[u(t)]:=\frac{1}{\chi} \int_{\R^2}\left|\frac{\nabla\sigma(t)}{\sigma(t)}\right|^2\sigma(t)\dd x
+ \|\Delta \Phi(t) + \sigma(t) - \alpha \Phi(t)\|_2^2$$
and
$$\Re[u(t)]:= \|\sigma(t)\|_{{\rm L}^2(\R^2)}^2 -\alpha \int_{\R^2}\sigma(t)\Phi(t)\dd x.$$
Indeed, owing to the smoothness of the solutions of \eqref{eq:i1}, we have that for $t>0$
\begin{eqnarray*}
-D^\V\EE[u(t)]=  \frac{\dd}{\dd t}\EE[\sigma,\phi]
  &=&-\frac{1}{\chi} \int_{\RR^2} \frac{|\nabla \sigma|^2}{\sigma} \dd x -\int_{\RR^2} \Phi \,\partial_t \sigma\dd x\\
&&\quad -\int_{\RR^2} \sigma \,\partial_t \Phi\dd x - \|\Delta \Phi - \alpha \Phi\|^2_{{\rm L}^2(\R^2)}\\
&=&-\frac{1}{\chi} \int_{\RR^2} \frac{|\nabla \sigma|^2}{\sigma} \dd x -\int_{\RR^2} \Phi\, \Delta \sigma\dd x\\
&&\quad  -\int_{\RR^2} \sigma (\Delta \Phi - \alpha \Phi)\dd x- \|\Delta \Phi - \alpha \Phi\|^2_{{\rm L}^2(\R^2)}\\
&=&-\frac{1}{\chi} \int_{\RR^2} \frac{|\nabla \sigma|^2}{\sigma} \dd x -2\int_{\RR^2} \sigma (\Delta \Phi  - \alpha \Phi)\dd x-\alpha\int_{\RR^2} \sigma\, \Phi\dd x\\
&&\quad  -\|\sigma\|^2_{{\rm L}^2(\R^2)}+\|\sigma\|^2_{{\rm L}^2(\R^2)} - \|\Delta \Phi - \alpha \Phi\|^2_{{\rm L}^2(\R^2)}\\
&=&-\frac{1}{\chi} \int_{\RR^2} \frac{|\nabla \sigma|^2}{\rho} \dd x- \|\Delta \Phi +\sigma - \alpha \Phi\|^2_{{\rm L}^2(\R^2)} + \|\sigma\|^2_{{\rm L}^2(\R^2)} \\
&&\quad -\alpha\int_{\RR^2} \sigma\,\Phi\dd x \;.
\end{eqnarray*}
hence~\eqref{dissut}.

Taking into account that $t\mapsto\|\Phi(t)\|_2^2$ is decreasing in $[0,\infty)$,
we deduce from~\eqref{eq:i4b} and the definition of $\DD(u(t))$, the following estimate for $\Re[u(t)]$
\begin{equation*}
\begin{aligned}
  \Re[u(t)] &\le \left(1+\frac{\alpha}{2}\right)\|\sigma(t)\|_{{\rm L}^2(\R^2)}^2 + \frac{\alpha}{2}\|\Phi(t)\|_{{\rm L}^2(\R^2)}^2 \\
  &\le  {\eps}\frac{\alpha+2}{2}\Lambda \left\|\frac{\nabla\sigma(t)}{\sigma(t)}\right\|_{\L^2(\sigma(t))}^2+ L_{\eps} + \frac{\alpha}{2}\|\Phi_h^n\|_{{\rm L}^2(\R^2)}^2 \\
  &\le {\eps}\frac{\alpha+2}{2}\Lambda\chi\DD[u(t)]+ L_{\eps} + \frac{\alpha}{2}\|\phi_h^n\|_{{\rm L}^2(\R^2)}^2.
  \end{aligned}
\end{equation*}
Choosing $\eps=1/(\chi \Lambda (\alpha +2))$ we obtain
\begin{equation*}
  \Re[u(t)] \le \frac{1}{2}\DD[u(t)]+ C(\Lambda) + \frac{\alpha}{2}\|\phi_h^n\|_{{\rm L}^2(\R^2)}^2,
\end{equation*}
where $C(\Lambda)=L_\eps$ with the choice of $\eps$ above.
Then we have
\begin{equation}\label{di}
 \DD[u(t)] = D^\V\EE[u(t)] + \Re[u(t)] \le D^\V\EE[u(t)] + \frac{1}{2}\DD[u(t)]+ C(\Lambda) + \frac{\alpha}{2}\|\phi_h^n\|_{{\rm L}^2(\R^2)}^2.
\end{equation}

\noindent{\bf Step 3 -} The function $t\mapsto \EE[u(t)]$ is continuous in $[0,+\infty)$.
This property is clear for $t>0$ owing to the smoothness of $u(t)$. We only have to prove the continuity at $0$.
Recalling that as $t\to 0$
\begin{equation}\label{contE}
\begin{aligned}
&\int_{\RR^2} \sigma(t)\log\sigma(t)\dd x \to \int_{\RR^2} \rho_h^n\log\rho_h^n\dd x,\\
& \frac{1}{2}\int_{\RR^2}\left[|\nabla \Phi(t)|^2+\alpha\,\Phi(t)^2\right]\dd x\to \frac{1}{2}\int_{\RR^2}\left[|\nabla \phi_h^n|^2+\alpha\,|\phi_h^n|^2\right]\dd x,
\end{aligned}
\end{equation}
we have to prove that $\int_{\RR^2}\Phi(t)\sigma(t) \dd x\to\int_{\RR^2}\phi_h^n\,\rho_h^n \dd x $.

Introducing $A(s)=(s+1)\log(s+1)-s$ and its convex conjugate $A^*(s)=e^s-s-1$ we recall Young's inequality $s\,s' \le A(s)+A^*(s')$ for $s,s'\in [0,\infty)$. We also recall a variant of Moser-Trudinger's inequality, see~\cite{masmoudi}:
\begin{equation}\label{masmoudi}
  \int_{\R^2}\left(\ee^{2\pi u^2}-1\right)\dd x\le C \|u\|^2_{{\rm L}^2(\R^2)} \quad\mbox{for $u\in \H^1(\R^2)$ such that $\|\nabla u\|_{{\rm L}^2(\R^2)}\le 1$}\;.
\end{equation}

Let $\varepsilon \in (0,1)$ be such that 
\begin{equation}
  \label{eq:bo}
\varepsilon \sup_{t \in [0,1]} \|\Phi(t)\|_{\H^1(\R^2)} \le 1\;.
\end{equation}
Since $\Phi(t)$ converges to $\phi_h^n$ in $\H^1(\R^2)$ as $t \to 0$, there is $t_\eps \in (0,1)$ such that
\begin{equation}
  \label{eq:boum}
  \|\Phi(t)-\phi_h^n\|_{\H^1(\R^2)} \le 1 \quad \mbox{for $t \in [0,t_\eps]$.}
\end{equation}
Let $t \in [0,t_\eps]$. On the one hand, it follows from Young's inequality,~\eqref{masmoudi}, and~\eqref{eq:bo} that
\begin{align}\label{eq:bam}
  \int_{\R^2} |\Phi(t)||\sigma(t)-\rho_h^n|\dd x &\le \int_{\R^2} A\left( \frac{|\sigma(t)-\rho_h^n|}{\eps}\right)\dd x+\int_{\R^2} A^*(\eps |\Phi(t)|)\nonumber\\
&\le \int_{\R^2} A\left( \frac{|\sigma(t)-\rho_h^n|}{\eps}\right)\dd x+C \eps^2 \sup_{s \in [0,1]}\|\Phi(t)\|^2_{{\rm L}^2(\R^2)}\;.
\end{align}
On the other hand Young's inequality,~\eqref{masmoudi}, and~\eqref{eq:boum} give
\begin{align}\label{eq:bim}
  \int_{\R^2} |\rho_h^n||\Phi(t)-\phi_h^n|\dd x &\le \int_{\R^2} A(\eps\,\rho_h^n)+ \int_{\R^2} A^*\left( \frac{|\Phi(t)-\phi_h^n|}{\eps}\right)\dd x\nonumber\\
&\le \int_{\R^2} A(\eps\,\rho_h^n) \dd x+\frac{C}{\eps^2}\|\Phi(t)-\phi_h^n\|^2_{{\rm L}^2(\R^2)}\;.
\end{align}
Since $\sigma(t)\log\sigma (t) \to \rho_h^n \log \rho_h^n$ in $\L^1(\R^2)$ and $\Phi(t) \to \phi^n_h$ in $\L^2(\R^2)$, we let $t \to 0$ in~\eqref{eq:bam} and~\eqref{eq:bim} and obtain
\begin{equation*}
  \limsup_{t \to 0} \left| \int_{\R^2} \left(\Phi(t)\,\sigma(t) -\phi_h^n\, \rho_h^n \right)\dd x\right| \le C \eps^2 \sup_{s \in [0,1]}\|\Phi(t)\|^2_{{\rm L}^2(\R^2)}+\int_{\R^2} A(\eps\,\rho_h^n)\dd x\;.
 \end{equation*}
We finally use the integrability of $\rho_h^n \log \rho_h^n$ to pass to the limit as $\eps \to 0$ in the above inequality and conclude that
\begin{equation*}
  \lim_{t \to 0} \left| \int_{\R^2} \left(\Phi(t)\,\sigma(t) -\phi_h^n \,\rho_h^n \right)\dd x\right|=0\,,
\end{equation*}
thereby completing the proof of the continuity of $t \mapsto \EE[u(t)]$.

\noindent{\bf Step 4 -} By the Lagrange theorem, since $t\mapsto \EE[u(t)]$  is continuous at $t=0$ and differentiable at $t>0$,
for every $t>0$ there exists $\theta(t)\in(0,t)$ such that
\begin{equation*}%\label{eq:i4}
  \frac{\EE[u_h^n] - \EE[u(t)]}{t} = D^\V\EE[u(\theta(t))].
\end{equation*}
From \eqref{v 1}, we obtain $D^\V\EE[u(\theta(t))] \le \frac{1}{h}\left(\V(u_h^{n-1}) - \V(S^\V_t(u_h^n))\right)$, and finally by \eqref{di}
\begin{equation}\label{eq:i6}
  \frac12\DD[u(\theta(t))] \le \frac{1}{h}\left(\V(u^{n-1}) - \V(S^\V_t(u_h^n))\right) + C(\Lambda) +\frac{\alpha}{2}\|\phi_h^n\|_{{\rm L}^2(\R^2)}^2.
\end{equation}
Then $\limsup_{t\to 0} \DD[u(\theta(t))] <+\infty$ due to \eqref{contE}.

Denoting by $\sigma_k=\sigma(\theta(t_k))$ and $\Phi_k=\Phi(\theta(t_k))$ sequences given by $t_k\to 0$ as $k\to+\infty$, we have
\begin{equation}\label{la}
    \limsup_{k\to+\infty} \int_{\R^2}\left|\frac{\nabla\sigma_k}{\sigma_k}\right|^2\sigma_k\dd x <+\infty
\end{equation}
and
\begin{equation*}
    \limsup_{k\to+\infty} \|\Delta\Phi_k + \sigma_k - \alpha\Phi_k\|_{{\rm L}^2(\R^2)}^2  <+\infty.
\end{equation*}
Moreover, by \eqref{eq:i4b} and \eqref{la}  we obtain
\begin{equation*}
\limsup_{k\to+\infty}\|\sigma_k\|_{{\rm L}^2(\R^2)}^2<+\infty.
\end{equation*}
By weak compactness in $\L^2(\R^2)$, taking into account that $\sigma_k\to \rho_h^n$ narrowly and $\Phi_k\to \phi_h^n$ strongly in $\HH^1(\R^2)$,
we pass to the limit by lower semicontinuity and we obtain that
$\rho_h^n\in\L^2(\R^2)$, $\Delta\phi_h^n\in\L^2(\R^2)$, $\phi_h^n\in\HH^2(\R^2)$ and
\begin{equation}\label{ee}
    \|\Delta\phi_h^n+ \rho_h^n - \alpha\phi_h^n\|_{{\rm L}^2(\R^2)}^2 \le \liminf_{k\to+\infty} \|\Delta\Phi_k + \sigma_k - \alpha\Phi_k\|_{{\rm L}^2(\R^2)}^2.
\end{equation}
Finally, by Proposition \ref{prop:vf}, defining
$v_k:={\nabla\sigma_k}/{\sigma_k}$
there exists $v\in \L^2(\R^2,\sigma_h^n;\R^2)$ such that, up to a subsequence,
\begin{equation}\label{a}
    \int_{\R^2} \varphi\cdot v_k \sigma_k \dd x \dd t \to \int_{\R^2} \varphi\cdot v \rho_h^n \dd x \dd t,
\end{equation}
for every $\varphi\in \C^\infty_0(\R^2,\R^2)$.
Since $v \rho_h^n \in \L^1(\R^2)$ and
\begin{equation*}
\int_{\R^2} \varphi\cdot v_k \sigma_k \dd x  =
     \int_{\R^2} \varphi\cdot \nabla\sigma_k \dd x =
     - \int_{\R^2} (\nabla\cdot\varphi) \sigma_k \dd x
     \to - \int_{\R^2} (\nabla\cdot\varphi) \rho_h^n \dd x ,
\end{equation*}
we deduce from \eqref{a} that, $v \rho_h^n = \nabla\rho_h^n$ and $\rho_h^n\in W^{1,1}(\R^2)$.
Finally, the lower semicontinuity property \eqref{VFlsc} yields that
\begin{equation}\label{eee}
    \int_{\R^2}\left|\frac{\nabla\rho_h^n}{\rho_h^n}\right|^2\rho_h^n\dd x
    \le \liminf_{k\to+\infty} \int_{\R^2}\left|\frac{\nabla\sigma_k}{\sigma_k}\right|^2\sigma_k\dd x.
\end{equation}
The final inequality \eqref{impreg} follows from \eqref{eq:i6}, \eqref{eee}, \eqref{ee}, \eqref{contE}, and the definition of the dissipation $\DD$.
\end{proof}

%------------------
\subsection{The Euler-Lagrange equation}
%--------------------
\begin{lemma}[Euler-Lagrange equation]\label{lem:b7}
Let $0<\chi < 8\pi$, $(\rho_0,\phi_0)\in\KK \times \HH^1(\R^2)$ and $h>0$.
If $(\rho_h^n,\phi_h^n)$ is the sequence of the scheme \eqref{scheme:jko}, then
\begin{equation}\label{ELeq1}
 \int_{\RR^2} \zeta\cdot \left( \nabla\rho_h^n - \chi\rho_h^n\nabla \phi_h^n \right) \dd x
 = \frac1{{h}}\int_{\RR^2}\left[(T_{\rho_h^n}^{\rho_h^{n-1}}-\id)\cdot \zeta\right] \rho_h^n\dd x\,
\end{equation}
for every $\zeta\in \C^\infty_0(\R^2;\R^2)$, and
\begin{equation}\label{ELeq2}
  \int_{\RR^2} \left( - \Delta \phi_h^n + \alpha \phi_h^n - \rho_h^n \right)\eta \dd x 
 = \tau \int_{\RR^2} \frac{\phi_h^{n-1} -\phi_h^n}{{h}} \eta \dd x ,
\end{equation}
for every  $\eta\in \C^\infty_0(\R^2)$.
Moreover, the following identities are satisfied:
\begin{equation}\label{discreteW2}
  \int_{\R^2}\left|\frac{\nabla\rho_h^n}{\rho_h^n}-\chi\nabla\phi_h^n \right|^2\rho_h^n \dd x =
  \frac{d_W^2(\rho_h^n,\rho_h^{n-1})}{h^2},
\end{equation}
and
\begin{equation}\label{discreteL2}
    \|\Delta\phi_h^n -\alpha\phi_h^n+\rho_h^n \|_2^2 =
    \tau^2 \frac{\|\phi_h^n-\phi_h^{n-1}\|_{{\rm L}^2(\R^2)}^2}{h^2}.
\end{equation}
Finally, the approximative weak solution estimate 
\begin{equation}
\left| \int_{\RR^2} \left[ \xi (\rho_h^n-\rho_h^{n-1}) + h\ \nabla\xi\cdot \left( \nabla\rho_h^n - \chi\ \rho\ \nabla\phi_h^n \right) \right] \dd x \right| \le \|\xi\|_{\rm W^{2,\infty}} \frac{d_W^2(\rho_h^n,\rho_h^{n-1})}{2}\ \label{bb40}
\end{equation}
holds for any $\xi \in \C^\infty_0(\RR^2)$.
\end{lemma}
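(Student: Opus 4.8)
The plan is to derive the Euler--Lagrange equations \eqref{ELeq1}--\eqref{ELeq2} by the perturbation arguments sketched in the introduction, then obtain the identities \eqref{discreteW2}--\eqref{discreteL2} by a clever choice of test vector field/function, and finally bootstrap \eqref{bb40} from \eqref{ELeq1} together with a quantitative comparison between the two competitors in the minimisation problem.

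\textbf{Step 1: the $\phi$-variation.} Fix $(\rho_h^n,\phi_h^n)$ minimising $\FF_{h,n-1}$. For $\eta\in\C^\infty_0(\R^2)$ and $\delta\in\R$ small, test the minimality against $(\rho_h^n,\phi_h^n+\delta\eta)$. Since $\rho_h^n\in\HH^2(\R^2)$ by Lemma~\ref{impro}, all terms are smooth in $\delta$; differentiating $\delta\mapsto\FF_{h,n-1}[\rho_h^n,\phi_h^n+\delta\eta]$ at $\delta=0$ and setting the derivative to zero gives, after integrating the $|\nabla\phi|^2$ term by parts (legitimate because $\phi_h^n\in\HH^2$), exactly \eqref{ELeq2}. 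This step is routine.

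\textbf{Step 2: the $\rho$-variation.} For $\zeta\in\C^\infty_0(\R^2;\R^2)$ set $\rho_\delta=(\id+\delta\zeta)_\#\rho_h^n$ and test against $(\rho_\delta,\phi_h^n)$. The key limits are the ones displayed in the introduction: $\delta^{-1}\int(\rho_\delta\log\rho_\delta-\rho_h^n\log\rho_h^n)\to-\int\rho_h^n\,\nabla\cdot\zeta\,\dd x$ (standard, using $\rho_h^n\log\rho_h^n\in\L^1$); $\delta^{-1}\bigl(d_W^2(\rho_\delta,\rho_h^{n-1})-d_W^2(\rho_h^n,\rho_h^{n-1})\bigr)/2\to\int(T_{\rho_h^n}^{\rho_h^{n-1}}-\id)\cdot\zeta\,\rho_h^n\,\dd x$ (the standard first-variation formula for the Wasserstein distance along push-forwards, see~\cite{AGS08}); and the potential term $\delta^{-1}\int(\rho_h^n\phi_h^n-\rho_\delta\phi_h^n)\,\dd x=\delta^{-1}\int\rho_h^n[\phi_h^n-\phi_h^n\circ(\id+\delta\zeta)]\,\dd x\to-\int\rho_h^n\,\zeta\cdot\nabla\phi_h^n\,\dd x$. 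This last passage is exactly where the improved regularity is essential: with $\phi_h^n\in\HH^2(\R^2)\hookrightarrow W^{1,p}$ for all $p<\infty$ and $\rho_h^n\in\L^2(\R^2)$ (both from Lemma~\ref{impro}), the product $\rho_h^n\,\zeta\cdot\nabla\phi_h^n$ is genuinely in $\L^1$ and one can justify the limit by dominated convergence after a difference-quotient estimate; I expect this to be the main technical obstacle, and I would handle it by writing $\phi_h^n\circ(\id+\delta\zeta)-\phi_h^n=\delta\int_0^1\zeta\cdot(\nabla\phi_h^n)\circ(\id+s\delta\zeta)\,\dd s$ and using $\L^2$-continuity of translations. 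Summing the three contributions and using that $\delta\mapsto\FF_{h,n-1}[\rho_\delta,\phi_h^n]$ has a minimum at $\delta=0$ (so the one-sided derivatives have opposite signs, whence the derivative vanishes) yields \eqref{ELeq1}.

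\textbf{Step 3: the identities and the weak-solution estimate.} For \eqref{discreteL2}, insert $\eta=\Delta\phi_h^n-\alpha\phi_h^n+\rho_h^n$ (in $\L^2$ by Lemma~\ref{impro}, and one approximates by $\C^\infty_0$ functions) into \eqref{ELeq2} and use the duality relation to identify $\tau(\phi_h^{n-1}-\phi_h^n)/h$ with $\Delta\phi_h^n-\alpha\phi_h^n+\rho_h^n$, giving $\|\Delta\phi_h^n-\alpha\phi_h^n+\rho_h^n\|_2^2=\tau^2\|\phi_h^n-\phi_h^{n-1}\|_2^2/h^2$. For \eqref{discreteW2}, the analogous move is to choose $\zeta=\nabla\rho_h^n/\rho_h^n-\chi\nabla\phi_h^n$ in \eqref{ELeq1}, so the left side becomes $\int|\nabla\rho_h^n/\rho_h^n-\chi\nabla\phi_h^n|^2\rho_h^n\,\dd x$ and the right side becomes $h^{-1}\int(T_{\rho_h^n}^{\rho_h^{n-1}}-\id)\cdot(\nabla\rho_h^n/\rho_h^n-\chi\nabla\phi_h^n)\rho_h^n\,\dd x$; one then shows this equals $h^{-1}\|T_{\rho_h^n}^{\rho_h^{n-1}}-\id\|_{\L^2(\rho_h^n)}^2/h=d_W^2(\rho_h^n,\rho_h^{n-1})/h^2$ by a second, scalar first-variation identity (again~\cite{AGS08}; a density argument is needed since the chosen $\zeta$ is not compactly supported smooth, using the $\L^2(\rho_h^n)$-bounds from Lemma~\ref{impro} and \eqref{B}). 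Finally, for \eqref{bb40} I would return to the minimisation inequality $\FF_{h,n-1}[\rho_h^n,\phi_h^n]\le\FF_{h,n-1}[(\id+\nabla\xi)_\#\rho_h^n,\phi_h^n]$: expanding the entropy and potential terms to first order with an explicit second-order remainder controlled by $\|\xi\|_{W^{2,\infty}}$, and bounding the Wasserstein term via $d_W^2((\id+\nabla\xi)_\#\rho_h^n,\rho_h^{n-1})-d_W^2(\rho_h^n,\rho_h^{n-1})\le 2\int\nabla\xi\cdot(T_{\rho_h^n}^{\rho_h^{n-1}}-\id)\rho_h^n\,\dd x+\int|\nabla\xi|^2\rho_h^n\,\dd x$ and then identifying $\int\nabla\xi\cdot(T_{\rho_h^n}^{\rho_h^{n-1}}-\id)\rho_h^n\,\dd x$ with $-h\int\nabla\xi\cdot(\nabla\rho_h^n-\chi\rho_h^n\nabla\phi_h^n)\,\dd x$ via \eqref{ELeq1}, one arrives at \eqref{bb40} with the stated constant after absorbing $\int|\nabla\xi|^2\rho_h^n\,\dd x\le\|\xi\|_{W^{2,\infty}}d_W^2(\rho_h^n,\rho_h^{n-1})$-type bounds — here one uses $d_W^2(\rho_h^n,\rho_h^{n-1})=\|T_{\rho_h^n}^{\rho_h^{n-1}}-\id\|_{\L^2(\rho_h^n)}^2$ to rewrite everything in terms of the transport displacement. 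The bookkeeping with constants is the only delicate point here, but it is elementary once \eqref{ELeq1} is in hand.
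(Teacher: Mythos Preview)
Your Steps~1 and~2 are essentially the paper's argument, with only cosmetic differences (the paper perturbs $\rho$ and $\phi$ simultaneously rather than separately, and for the cross term it pairs the weak $\LL^2$ convergence of the difference quotient $(\phi\circ\TT_\delta-\phi)/\delta\rightharpoonup\zeta\cdot\nabla\phi$ against $\rho\in\LL^2$ rather than invoking $\phi\in W^{1,p}$, but either justification works). For \eqref{discreteW2} the paper is slightly more direct than your proposal: rather than inserting a particular test vector and invoking a ``second first-variation identity'', it observes that \eqref{ELeq1}, valid for all $\zeta\in\C^\infty_0$, together with the fact that both $\nabla\rho_h^n/\rho_h^n-\chi\nabla\phi_h^n$ and $h^{-1}(T_{\rho_h^n}^{\rho_h^{n-1}}-\id)$ lie in $\LL^2(\rho_h^n)$, forces these two vector fields to coincide in $\LL^2(\rho_h^n)$ by density; taking the $\LL^2(\rho_h^n)$-norm of both sides gives \eqref{discreteW2} immediately.

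Your approach to \eqref{bb40}, however, does not work. The minimisation inequality is one-sided, whereas \eqref{bb40} is a two-sided estimate; more seriously, the quadratic remainder you produce contains a term $\int_{\R^2}|\nabla\xi|^2\rho_h^n\,\dd x$, which is of order $\|\nabla\xi\|_\infty^2$ and has no reason to be controlled by $\|\xi\|_{W^{2,\infty}}\,d_W^2(\rho_h^n,\rho_h^{n-1})$. The paper instead uses the standard JKO device: for $\xi\in\C^\infty_0(\R^2)$, the second-order Taylor expansion of $\xi$ at $T_{\bar\rho}^{\rho}(x)$ gives
\[
\bigl|\xi(x)-\xi(T_{\bar\rho}^{\rho}(x))-(\nabla\xi\circ T_{\bar\rho}^{\rho})(x)\cdot(x-T_{\bar\rho}^{\rho}(x))\bigr|\le \tfrac12\|D^2\xi\|_\infty\,|x-T_{\bar\rho}^{\rho}(x)|^2.
\]
Integrating against $\bar\rho=\rho_h^{n-1}$, using that $T_{\bar\rho}^{\rho}$ pushes $\bar\rho$ onto $\rho=\rho_h^n$, and then invoking \eqref{ELeq1} with $\zeta=\nabla\xi$ to replace the transport term by $h\int\nabla\xi\cdot(\nabla\rho_h^n-\chi\rho_h^n\nabla\phi_h^n)\,\dd x$, one obtains \eqref{bb40} directly with the stated constant. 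This avoids any return to the minimisation problem.
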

%--------------------
\begin{proof}
In order to simplify the notation, in this proof we use the notation
$\rho=\rho_h^n$, $\bar\rho=\rho_h^{n-1}$, $\phi=\phi_h^n$, $\bar\phi=\phi_h^{n-1}$.   

Let $\zeta\in\C^\infty_0(\RR^2;\RR^2)$ and $\eta\in\C^\infty_0(\RR^2)$ be two smooth functions. Define $\TT_\delta:=\id + \delta\,\zeta$ and for $\delta\in (0,1)$,
\begin{equation*}
 \rho_\delta:= (\TT_\delta) _\# \rho\;, \quad \phi_\delta := \phi + \delta\, \eta \;.
\end{equation*}

\noindent $\bullet$ It is standard, see~\cite[Theorem~5.30]{Vi03} for instance, that%ou AGS
\begin{equation}
\label{bb46}
\lim_{\delta\to 0} \frac1\delta \int_{\RR^2}\left(\rho_\delta \log \rho_\delta - \rho \log \rho\right)\dd x = - \int_{\RR^2}\Delta \zeta(x)\,\rho(x)\dd x\;.
\end{equation}

\noindent $\bullet$ It is also classical, see~\cite[Theorem~8.13]{Vi03} for instance, that%ou AGS
\begin{equation}
 \lim_{\delta \to 0} \frac{d^2_W(\rho_\delta ,\bar \rho)-d^2_W(\rho ,\bar \rho)}{2\delta}
 = - \int_{\RR^2} \left[(\id-T_{\bar\rho}^{\rho})\cdot (\zeta\!\circ\!T_{\bar\rho}^{\rho})\right] \,\bar \rho\dd x\,, \label{bb49}
\end{equation}
where $T_{\bar\rho}^{\rho}$ is the optimal map pushing $\bar \rho$ onto $\rho$.

\noindent $\bullet$ A standard computation gives
\begin{multline}
\lim_{\delta\to 0} \frac{1}{2\delta}\ \left[ \|\nabla \phi_\delta\|_{{\rm L}^2(\R^2)}^2 + 
\alpha\, \|\phi_\delta\|_{{\rm L}^2(\R^2)}^2-\|\nabla \phi\|_{{\rm L}^2(\R^2)}^2 - \alpha\, \|\phi\|^2_{{\rm L}^2(\R^2)} \right] \\= \int_{\RR^2} \left( \nabla \phi\cdot\nabla \eta + \alpha \phi\, \eta \right) \dd x
= \int_{\RR^2} \left( -\Delta \phi + \alpha \phi\right)\, \eta  \dd x\;. \label{bb47}
\end{multline}

\noindent $\bullet$ Since $\phi \in \H^1(\R^2)$, we have%
\begin{equation*}
  \frac{\phi\!\circ\!\TT_\delta -\phi}{\delta} \rightharpoonup \zeta \cdot \nabla \phi\quad \mbox{in $\LL^2(\RR^2)$,} \qquad \eta\!\circ\!\TT_\delta  \rightarrow \eta \quad \mbox{in $\LL^2(\RR^2),$}
\end{equation*}
and recalling that $\rho \in \LL^2(\RR^2)$ by Lemma~\ref{impro}, we conclude that
\begin{equation}
\label{bb48}
  \begin{aligned}
\frac1\delta\int_{\RR^2} [\rho\,\phi-\rho_\delta\,\phi_\delta](x) \dd x &=  \frac1\delta\int_{\RR^2} \rho \left[\phi-\phi\!\circ\!\TT_\delta)-\delta\, \eta\!\circ\!\TT_\delta \right]\dd x\\
&\mathop{\longrightarrow}_{\delta \to 0} - \int_{\RR^2} \left( \zeta \cdot \nabla \phi + \eta \right) \rho\, \dd x\;.
  \end{aligned}
\end{equation}

\noindent $\bullet$ We then infer from~\eqref{bb46}, \eqref{bb49}, \eqref{bb47}, and~\eqref{bb48} that
\begin{equation*}
  \begin{aligned}
0 &\le \lim_{\delta\to 0} \frac1\delta \left( \FF[\rho_\delta,\phi_\delta]-\FF[\rho,\phi]\right)\\
&= -\frac1{h \chi} \int_{\RR^2}(x-T_{\bar\rho}^{\rho})\cdot (\zeta\!\circ\!T_{\bar\rho}^{\rho})\,\bar \rho\dd x + \frac{\tau}{h}\int_{\RR^2} \eta\,(\phi-\bar \phi) \dd x\\
&\qquad-\frac1\chi \int_{\RR^2}\Delta \zeta\,\rho \dd x-\int_{\RR^2} \rho\, \zeta\cdot \nabla \phi\dd x \\
&\qquad -  \int_{\RR^2}\rho\, \eta \dd x+ \int_{\RR^2} [-\Delta \phi +\alpha \,\phi] \, \eta\dd x\;.
  \end{aligned}
\end{equation*}
The above inequality being valid for arbitrary $(\zeta,\eta)\in\C_0^\infty(\RR^2;\RR^2)\times\C_0^\infty(\RR^2)$, it is also valid for $(-\zeta,-\eta)$ so that we end up with
\begin{eqnarray}
& & \frac{1}{\chi} \int_{\RR^2} \zeta\cdot \left( \nabla \rho - \chi\rho\nabla \phi \right) \dd x + \int_{\RR^2} \left( \tau\ \frac{\phi-\bar \phi}{h} - \Delta \phi + \alpha \phi - \rho \right)\ \eta \dd x \label{bb50}\\
& &=  \frac1{h \chi} \int_{\RR^2}(x-T_{\bar\rho}^{\rho})\cdot (\zeta\!\circ\!T_{\bar\rho}^{\rho})\,\bar \rho\dd x\,. \nonumber
\end{eqnarray}

Taking $\zeta=0$ in~\eqref{bb50} we obtain \eqref{ELeq2}. While choosing $\eta=0$ in~\eqref{bb50} gives, for all $\zeta\in\C_0^\infty(\RR^2;\RR^2)$
\begin{equation}
\int_{\RR^2} \zeta\cdot \left( \nabla \rho- \chi\rho\nabla \phi \right) \dd x = \frac{1}{h} \int_{\RR^2}(x-T_{\bar\rho}^{\rho})\cdot (\zeta\!\circ\!T_{\bar\rho}^{\rho})\,\bar \rho\dd x\;, \label{bb51}
\end{equation}
and \eqref{ELeq1} follows from \eqref{ot} and the fact that $T^\rho_{\bar \rho}$ pushes $\bar\rho$ onto $\rho$.

In order to obtain \eqref{discreteW2}, we observe that
$\nabla\phi_h^n\in\L^4(\R^2)$ as a consequence of the regularity $\phi_h^n\in \H^2(\R^2)$ established in Lemma \ref{impro} and the continuous embedding of $\H^2(\R^2)$ in $W^{1,4}(\R^2)$. Since $\rho_h^n\in\L^2(\R^2)$ we conclude that $\nabla\phi_h^n\in\L^2(\rho_h^n)$.
From \eqref{ELeq1} it follows that
$$\frac{\nabla\rho_h^n}{\rho_h^n}-\chi\nabla\phi_h^n  = \frac1h (T_{\rho_h^n}^{\rho_h^{n-1}}-\id), \qquad\mbox{ in }\L^2(\rho_h^n).$$
The equality of the $\L^2(\rho_h^n)$ norms yields \eqref{discreteW2} after using the properties and the definition of optimal transport $T_{\bar\rho}^\rho$.
Identity \eqref{discreteL2} follows immediately by \eqref{ELeq2}. 

Finally consider $\xi \in \CC_0^\infty(\RR^2)$. By the Taylor expansion, we have, for $x\in\RR^2$
\begin{equation*}
\left| \xi(x) - \xi (T_{\bar\rho}^{\rho}(x)) - (\nabla\xi\circ T_{\bar\rho}^{\rho})(x)\cdot (x-T_{\bar\rho}^{\rho}(x)) \right| \le \|D^2\xi\|_{{\rm L}^\infty(\R^2)}\ \frac{|x-T_{\bar\rho}^{\rho}(x)|^2}{2}\;.
\end{equation*}
Multiplying by $\bar \rho$ and integrating over $\RR^2$ gives
$$
\left| \int_{\RR^2} \left[ \xi\, \bar \rho - \xi\, \rho - (\nabla\xi\circ T_{\bar\rho}^{\rho})\cdot (\id-T_{\bar\rho}^{\rho})\, \bar \rho \right] \dd x \right| \le \|D^2\xi\|_{{\rm L}^\infty(\R^2)} \frac{d_W^2(\rho,\bar \rho)}2\,.
$$
Combining the above inequality with \eqref{bb51} (with $\zeta=\nabla\xi$) leads us to~\eqref{bb40}.% :
\end{proof}
%%%%%%%%%%%%%%%%%%%%%%%%%%%%%%%%%%%%%
\section{Convergence}
%%%%%%%%%%%
\subsection{One-step estimates}
%----------------------
\begin{lemma}[Uniform estimates]\label{lemma7} There exists a constant $C_4>0$ such that, for all $h,T>0$ and $N \ge 1$ satisfying $N h \le T$,
\begin{multline*}
  \frac{1}{16\chi T} \int_{\R^2}|x|^2 \rho_h^N \dd x+ \frac{1}{4\chi h}\sum_{n=0}^{N-1}d_W^2(\rho_h^{n+1},\rho_h^{n})
+\frac{\tau}{2h}\sum_{n=0}^{N-1}\|\phi_h^{n+1}-\phi_h^{n}\|_{{\rm L}^2(\R^2)}^2\\+\frac{8\pi - \chi}{16 \pi \chi} \int_{\RR^2}  \rho_h^N |\log \rho_h^N| \dd x +\nu \left[\|\nabla \phi_h^N\|_{{\rm L}^2(\R^2)}^2 + \alpha \|\phi_h^N\|_{{\rm L}^2(\R^2)}^2 \right] \\\le C_4 + \left(1+\frac1T+(\log T)_+\right)
\end{multline*}
where $\nu$ is defined in the proof of Lemma~{\rm\ref{tintin}}.
\end{lemma}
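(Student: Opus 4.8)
The plan is to run the standard minimising-movement telescoping argument, paying particular attention to the second moment of $\rho_h^N$, which is not controlled by the free energy and has to be recovered from the Wasserstein increments.

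First I would record the one-step inequality. Since $(\rho_h^{n+1},\phi_h^{n+1})$ minimises $\FF_{h,n}$ and $\FF_{h,n}[\rho_h^n,\phi_h^n]=\EE[\rho_h^n,\phi_h^n]$, one gets for every $n\ge0$
\[
\frac{1}{2\chi h}d_W^2(\rho_h^{n+1},\rho_h^n)+\frac{\tau}{2h}\|\phi_h^{n+1}-\phi_h^n\|_{\L^2(\R^2)}^2+\EE[\rho_h^{n+1},\phi_h^{n+1}]\le\EE[\rho_h^n,\phi_h^n].
\]
Since $\KK\times\H^1(\R^2)$ is exactly the domain of $\EE$ we have $\EE[\rho_0,\phi_0]<\infty$, and an induction on $n$ based on the inequality above and on the lower bound of Lemma~\ref{tintin} shows that every $\EE[\rho_h^n,\phi_h^n]$ is finite; hence the right-hand sides telescope and, for every $N\ge1$,
\[
\frac{1}{2\chi h}\sum_{n=0}^{N-1}d_W^2(\rho_h^{n+1},\rho_h^n)+\frac{\tau}{2h}\sum_{n=0}^{N-1}\|\phi_h^{n+1}-\phi_h^n\|_{\L^2(\R^2)}^2+\EE[\rho_h^N,\phi_h^N]\le\EE[\rho_0,\phi_0].
\]

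Next I would control the second moment: using $\big(\int_{\R^2}|x|^2\rho\dd x\big)^{1/2}=d_W(\rho,\delta_0)$, the triangle inequality for $d_W$, and the Cauchy--Schwarz inequality together with $N\le T/h$, one obtains
\[
\int_{\R^2}|x|^2\rho_h^N\dd x\le 2\int_{\R^2}|x|^2\rho_0\dd x+\frac{2T}{h}\sum_{n=0}^{N-1}d_W^2(\rho_h^{n+1},\rho_h^n).
\]
Then I would combine the two displays. Inserting the lower bound of Lemma~\ref{tintin} for $\EE[\rho_h^N,\phi_h^N]$ into the telescoped inequality, every resulting term already carries the sign of the statement except $\frac{3}{2\chi}\int\rho_h^N\log H=-\frac{3}{2\chi}\log\pi-\frac{3}{\chi}\int\rho_h^N\log(1+|x|^2)$. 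I split $\frac{1}{2\chi h}=\frac{1}{4\chi h}+\frac{1}{8\chi h}+\frac{1}{8\chi h}$ and use the three fractions of $\sum d_W^2$ as follows: the first is kept as the $\frac{1}{4\chi h}\sum d_W^2$ of the statement; the second absorbs $\frac{1}{16\chi T}\int|x|^2\rho_h^N$, since the second-moment bound gives $\frac{1}{16\chi T}\int|x|^2\rho_h^N\le\frac{1}{8\chi T}\int|x|^2\rho_0+\frac{1}{8\chi h}\sum d_W^2$; the third absorbs $\frac{3}{\chi}\int\rho_h^N\log(1+|x|^2)$ after the elementary inequality $\log(1+s)\le\eps s+\log(1/\eps)$ (for $0<\eps\le1$, $s\ge0$) and again the second-moment bound, which yields $\frac{3}{\chi}\int\rho_h^N\log(1+|x|^2)\le\frac{6\eps}{\chi}\int|x|^2\rho_0+\frac{6\eps T}{\chi h}\sum d_W^2+\frac{3}{\chi}\log(1/\eps)$ and is admissible provided $\frac{6\eps T}{\chi h}\le\frac{1}{8\chi h}$, i.e. for $\eps=\min\{1,1/(48T)\}$. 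The terms $\frac{8\pi-\chi}{16\pi\chi}\int\rho_h^N|\log\rho_h^N|$ and $\nu\big[\|\nabla\phi_h^N\|_{\L^2(\R^2)}^2+\alpha\|\phi_h^N\|_{\L^2(\R^2)}^2\big]$ are kept from Lemma~\ref{tintin}, and $\frac{\tau}{2h}\sum\|\phi_h^{n+1}-\phi_h^n\|_{\L^2(\R^2)}^2$ from the telescoped inequality. The remaining contributions are $\EE[\rho_0,\phi_0]$, $C_1$, $\log\pi$, $\log48$ (absolute), the quantity $\eps\int|x|^2\rho_0$ (which, with this choice of $\eps$, is bounded by a constant times $T^{-1}$) and $\log(1/\eps)=(\log(48T))_+$ (bounded by a constant times $(\log T)_+$); renaming the constant yields the right-hand side of the statement.

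The one genuinely delicate point is the bookkeeping in the last step: the three fractions of $\frac{1}{2\chi h}\sum d_W^2$ must add up to $\frac{1}{2\chi h}$ while leaving exactly $\frac{1}{4\chi h}\sum d_W^2$ on the left after absorbing both $\frac{1}{16\chi T}\int|x|^2\rho_h^N$ and the $\log(1+|x|^2)$ contribution through the second-moment bound. It is this balance that fixes the numerical constant $48$ in the choice of $\eps$ and is responsible for the $T^{-1}$ and $(\log T)_+$ terms in the estimate; everything else is routine.
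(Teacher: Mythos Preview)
Your proof is correct and follows essentially the same route as the paper: telescoping the one-step inequality, invoking the lower bound of Lemma~\ref{tintin}, recovering the second moment via $d_W(\rho_h^N,\delta_0)$ and Cauchy--Schwarz, and finally absorbing the $\log(1+|x|^2)$ term with the elementary inequality $\log(1+s)\le\eps s+(-\log\eps)_+$ and the choice $\eps=1/(48T)$. The only cosmetic difference is that the paper first converts half of the Wasserstein sum into a $\frac{1}{8\chi T}\int|x|^2\rho_h^N$ term and then absorbs the logarithmic contribution directly into that second moment, whereas you route the logarithmic contribution back through the Wasserstein sum; the arithmetic and the resulting constants are the same.
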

%----------------------
\begin{proof}
For $n \ge 0$, $\FF[\rho_h^{n+1},\phi_h^{n+1}] \le \FF[\rho_h^{n},\phi_h^{n}]$, so that
\begin{equation*}
  \frac{1}{2\chi h}d_W^2(\rho_h^{n+1},\rho_h^{n}) + \frac{\tau}{2h}\|\phi_h^{n+1}-\phi_h^{n}\|_{{\rm L}^2(\R^2)}^2 + \EE[\rho_h^{n+1},\phi_h^{n+1}] \le \EE[\rho_h^{n},\phi_h^{n}]\;.
\end{equation*}
Summing up over $n \in \{0,\cdots,N-1\}$, we find
\begin{multline}\label{eq:c1}
  \frac{1}{2\chi h}\sum_{n=0}^{N-1}d_W^2(\rho_h^{n+1},\rho_h^{n}) + \frac{\tau}{2h}\sum_{n=0}^{N-1}\|\phi_h^{n+1}-\phi_h^{n}\|_{{\rm L}^2(\R^2)}^2 + \EE[\rho_h^{N},\phi_h^{N}] \le \EE[\rho_{0},\phi_{0}]\;.
\end{multline}
By Cauchy-Schwarz' inequality, we deduce that
\begin{equation}\label{eq:c2}
    d_W^2(\rho_h^{N},\rho_h^{0}) \le \left[\sum_{n=0}^{N-1}d_W(\rho_h^{n+1},\rho_h^{n})\right]^2 %\le h\,N\sum_{n=0}^{N-1}\frac{1}{h}d_W^2(\rho_h^{n+1},\rho_h^{n}) \\
\le \frac{T}{h} \sum_{n=0}^{N-1}d_W^2(\rho_h^{n+1},\rho_h^{n})\;.
\end{equation}
We thus infer from~\eqref{eq:c1},~\eqref{eq:c2}, and the lower bound~\eqref{eq:38} for $\EE$ that
\begin{align} 
\EE[\rho_{0},\phi_{0}]\ge &\, \frac{1}{4\chi T}d_W^2(\rho_h^{N},\rho_h^{0})+\frac{1}{4\chi h}\sum_{n=0}^{N-1}d_W^2(\rho_h^{n+1},\rho_h^{n})+\frac{\tau}{2h}\sum_{n=0}^{N-1}\|\phi_h^{n+1}-\phi_h^{n}\|_2^2\nonumber \\
&\,+\frac{8\pi - \chi}{16 \pi\chi} \int_{\RR^2}  \rho_h^N |\log \rho_h^N| \dd x +\nu \left[\|\nabla \phi_h^N\|_{{\rm L}^2(\R^2)}^2 + \alpha \|\phi_h^N\|_{{\rm L}^2(\R^2)}^2 \right] \nonumber \\
&\,+ \frac3{2\chi}\int_{\R^2}\rho_h^N\log H\dd x-C_1 \;.\label{p13}
\end{align}
Since the triangle inequality implies that
\begin{align*}
    \int_{\R^2}|x|^2 \rho_h^N \dd x = d_W^2(\rho_h^N,\delta_0) &\le 2 d_W^2(\rho_h^N,\rho_h^0) + 2d_W^2(\rho_h^0,\delta_0) \\
&= 2 d_W^2(\rho_h^N,\rho_h^0) +2\int_{\R^2}|x|^2 \rho_0 \dd x \;,
  \end{align*}
it follows that Equation~\eqref{p13} results in
\begin{align*}
       \frac3\chi \!\int_{\R^2}\rho_h^N\log(1+|x|^2)\dd x  
\ge&\, -\bar C_4 \left(1+\frac1T\right)+ \frac{1}{8\chi T} \int_{\R^2}|x|^2 \rho_h^N \dd x\\ &\,
+ \frac{1}{4\chi h}\sum_{n=0}^{N-1}d_W^2(\rho_h^{n+1},\rho_h^{n}) + \frac{\tau}{2h}\sum_{n=0}^{N-1}\|\phi_h^{n+1}-\phi_h^{n}\|_{{\rm L}^2(\R^2)}^2\\
  &\,+\frac{8\pi - \chi}{16 \pi\chi} \int_{\RR^2}  \rho_h^N |\log \rho_h^N| \dd x +\nu \left[\|\nabla \phi_h^N\|_{{\rm L}^2(\R^2)}^2 + \alpha \|\phi_h^N\|_{{\rm L}^2(\R^2)}^2 \right] 
    \end{align*}
where 
$$\bar C_4:=\EE[\rho_{0},\phi_{0}]+C_1+\frac{3\log \pi}{2\chi}-\frac1{4\chi}\int_{\R^2}|x|^2\rho_0\dd x\;.$$
Since $\log(1+|x|^2) \le \eps |x|^2 + (-\log \eps)_+$ for all $\eps >0$ and $x\in\R^2$, we obtain
\begin{multline*}
\bar C_4 + \frac{3\eps}{\chi} \int_{\R^2}|x|^2 \rho_h^N \dd x + \frac{3}{\chi}(-\log \eps)_+ \ge  \frac{1}{8\chi T} \int_{\R^2}|x|^2 \rho_h^N \dd x+ \frac{1}{4\chi h}\sum_{n=0}^{N-1}d_W^2(\rho_h^{n+1},\rho_h^{n})\\
+\frac{\tau}{2h}\sum_{n=0}^{N-1}\|\phi_h^{n+1}-\phi_h^{n}\|_{{\rm L}^2(\R^2)}^2+\frac{8\pi - \chi}{16 \pi \chi} \int_{\RR^2}  \rho_h^N |\log \rho_h^N| \dd x\\ +\nu \left[\|\nabla \phi_h^N\|_{{\rm L}^2(\R^2)}^2 + \alpha \|\phi_h^N\|_{{\rm L}^2(\R^2)}^2 \right]  \;.
\end{multline*}
Taking $\eps:={1}/{(48 T)}$ and $C_4=\bar C_4 +\frac{3}{\chi}\log 48$ we obtain the desired bound.
\end{proof}
%%%%%%%%%%%%%%%%%%%%%
\subsection{Estimates on the interpolant}
We consider the piecewise constant time dependent pair of functions $(\rho_h,\phi_h)$ defined by
\begin{equation*}
(\rho_h(t),\phi_h(t)) := (\rho_{h}^n,\phi_{h}^n)\,, \qquad t\in ((n-1)h,nh]\,, \qquad n\ge 0\,.
\end{equation*}
%---------------------------
\begin{lemma}[Time integrated estimates]\label{phiover}
 Let $T>0$. There exists a constant $C_5(T)>0$ such that, for all $h>0$ and $N \ge 1$ satisfying $Nh \le T$ it holds
  \begin{equation*}
    \int_0^{Nh} \left( \int_{\R^2}\left|\frac{\nabla\rho_h(s)}{\rho_h(s)}\right|^2\rho_h(s)\dd x + \| \Delta \phi_h(s) + \rho_h(s) - \alpha \phi_h(s) \|_{{\rm L}^2(\R^2)}^2\right) \dd s \le C_5(T)\;.
  \end{equation*}
\end{lemma}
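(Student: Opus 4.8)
The plan is to derive the estimate by summing the discrete differential inequality from Lemma~\ref{impro} over the index $n$ and telescoping the $\V$-contributions, using the one-step estimates of Lemma~\ref{lemma7} to control the remaining terms. First I would recall that by the definition of the piecewise constant interpolant and of $(\rho_h,\phi_h)$, the left-hand side of the claimed inequality equals
\[
h\sum_{n=1}^N \left( \int_{\R^2}\left|\frac{\nabla\rho_h^n}{\rho_h^n}\right|^2\rho_h^n\dd x + \| \Delta \phi_h^n + \rho_h^n - \alpha \phi_h^n \|_{{\rm L}^2(\R^2)}^2 \right),
\]
so it suffices to bound this quantity. For each $n$ with $1 \le n \le N$, I would apply Lemma~\ref{impro} with a choice of $\Lambda = \Lambda_h^n$ adapted to $(\rho_h^n,\phi_h^n)$; multiplying \eqref{impreg} by $2\chi h$ (or an appropriate constant) and summing over $n$ gives, after telescoping,
\[
h\sum_{n=1}^N \DD\text{-type terms} \lesssim \V(\rho_h^0,\phi_h^0) - \V(\rho_h^N,\phi_h^N) + h\sum_{n=1}^N\left( C(\Lambda_h^n) + \tfrac{\alpha}{2}\|\phi_h^n\|_2^2 \right).
\]

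The key point is then to show the right-hand side is bounded by a constant depending only on $T$ (and the data), uniformly in $h$ and $N$ with $Nh \le T$. For the telescoped term, $\V(\rho_h^0,\phi_h^0) = \V(\rho_0,\phi_0)$ is a fixed finite number since $(\rho_0,\phi_0)\in\KK\times\HH^1(\R^2)$, while $-\V(\rho_h^N,\phi_h^N)$ is bounded above using Carleman's estimate to control $-\int \rho_h^N\log\rho_h^N$ from above by $\int \rho_h^N|\log\rho_h^N|$ plus a second-moment term, both of which are controlled by Lemma~\ref{lemma7}; similarly the $\HH^1$-part of $\V(\rho_h^N,\phi_h^N)$ is nonnegative and can simply be dropped. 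For the terms $h\sum_n \tfrac{\alpha}{2}\|\phi_h^n\|_2^2$, I would bound $\|\phi_h^n\|_2^2 \le \|\phi_h^n\|_{\HH^1(\R^2)}^2$, which by Lemma~\ref{lemma7} is bounded by a constant of the form $C_4 + (1 + 1/(nh) + (\log(nh))_+)/\nu$; since $nh \le T$ one gets a uniform-in-$n$ bound once $nh$ is bounded away from $0$, and the sum $h\sum_{n=1}^N(\cdots)$ is then $\le T$ times that bound — care is needed for the small-$n$ terms where $1/(nh)$ blows up, but $h\sum_{n=1}^N \tfrac{1}{nh} = \sum_{n=1}^N \tfrac1n \lesssim 1+\log N \lesssim 1 + (\log(T/h))_+$, which is \emph{not} uniform in $h$.

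The main obstacle is precisely this logarithmic divergence: the naive bound on $h\sum_n \|\phi_h^n\|_{\HH^1}^2$ and on $h\sum_n C(\Lambda_h^n)$ (the latter involving $C(\Lambda) = L_\eps$ with $\eps = 1/(\chi\Lambda(\alpha+2))$, hence polynomial in $\Lambda$, and $\Lambda_h^n$ involving $\int\rho_h^n\log\rho_h^n$ and the second moment) could a priori grow in $h$. To handle it I would instead argue that, apart from finitely many initial steps, one has a clean uniform bound: for $n$ with $nh \ge 1$ (say), Lemma~\ref{lemma7} gives $\int\rho_h^n|\log\rho_h^n|\dd x + \|\phi_h^n\|_{\HH^1}^2 + \int|x|^2\rho_h^n\dd x \le C(T)$, so both $\Lambda_h^n$ and $\|\phi_h^n\|_2^2$ are bounded by $C(T)$, whence $C(\Lambda_h^n) \le C'(T)$, and the tail sum $h\sum_{nh\ge 1}(\cdots) \le T\,C''(T)$. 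For the few steps with $nh < 1$ (there are at most $\lceil 1/h\rceil$ of them, but each carries weight $h$), I would use the monotonicity of $\V$ along the scheme — from \eqref{num22}, $\V$ need not be monotone, but we can instead apply Lemma~\ref{impro} on the block $1 \le n \le N_0$ with $N_0 h \le 1$ and telescope: $h\sum_{n=1}^{N_0}\DD\text{-terms} \le \V(\rho_0,\phi_0) - \V(\rho_h^{N_0},\phi_h^{N_0}) + \sum_{n=1}^{N_0}h(C(\Lambda_h^n)+\tfrac\alpha2\|\phi_h^n\|_2^2)$, and on this short initial block $\Lambda_h^n$ and $\|\phi_h^n\|_2^2$ are controlled by Lemma~\ref{lemma7} with $1/T$ replaced by $1/(nh)$ — giving the harmless $h\sum 1/(nh)=\sum 1/n \lesssim \log N_0 \le \log(1/h)$, which one absorbs by noting that the \emph{product} with the finite energy gap stays bounded, or more simply by choosing the cutoff so that this block is handled by a direct a~priori $\HH^1$-bound on $\phi_h^n$ valid for all $n\ge 1$ (from \eqref{eq:c1}, $\EE[\rho_h^n,\phi_h^n]\le\EE[\rho_0,\phi_0]$, hence by Lemma~\ref{tintin} $\|\phi_h^n\|_{\HH^1}^2 \le \EE[\rho_0,\phi_0]/\nu + \text{(controlled)}$ provided $\int\rho_h^n\log\rho_h^n$ is bounded below, which Carleman plus the $n$-independent second-moment bound $d_W^2(\rho_h^n,\rho_0)\le (nh/h)\sum d_W^2 \le \ldots$ from \eqref{eq:c2} provides). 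Assembling these pieces yields the stated bound with a constant $C_5(T)$ depending only on $T$, $\chi$, $\tau$, $\alpha$ and the fixed data $(\rho_0,\phi_0)$.
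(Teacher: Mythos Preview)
Your overall strategy---multiply \eqref{impreg} by $h$, sum over $n$, telescope the $\V$-terms, and control the remainder via Lemma~\ref{lemma7}---is exactly the paper's approach. The difficulty you identify, however, is artificial, and your attempt to resolve it is neither complete nor necessary.

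The point you are missing is that Lemma~\ref{lemma7} is stated for \emph{all} $h,T>0$ and $N\ge 1$ with $Nh\le T$; the constant on the right depends on $T$, not on $Nh$. Hence, having fixed $T$, for \emph{every} $n\in\{1,\dots,N\}$ one has $nh\le Nh\le T$, and Lemma~\ref{lemma7} applied with this same $T$ (and with $n$ in place of $N$) gives
\[
\int_{\R^2}\rho_h^n|\log\rho_h^n|\dd x + \int_{\R^2}|x|^2\rho_h^n\dd x + \|\phi_h^n\|_{\HH^1(\R^2)}^2 \le C(T)
\]
\emph{uniformly in $n$ and $h$}. In particular one can take a single constant $\Lambda=\Lambda(T)$ in Lemma~\ref{impro} for all $n$, so that $C(\Lambda_h^n)=C(\Lambda(T))$ is fixed and $h\sum_{n=1}^N C(\Lambda(T))\le T\,C(\Lambda(T))$; likewise $h\sum_{n=1}^N\|\phi_h^n\|_2^2 \le T\sup_n\|\phi_h^n\|_2^2 \le T\,C(T)$. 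No $1/(nh)$ ever appears, and there is no need to split off an initial block or to invoke any monotonicity of $\V$ along the scheme (which, as you note yourself, is not available). Once you make this observation your argument collapses to a few lines and coincides with the paper's proof.
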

%--------------------------
\begin{proof}
Fix $N \ge 1$ such that $N h \le T$. We set
$$\Lambda(T)=16 + 2\log\pi+ \frac2\ee + C_4\left( \frac{16 \pi \chi}{8\pi - \chi} + 16\chi\,T\right)\left( 1 + \frac1T+ (\log T)_+\right)\;.$$
By Lemma~\ref{lemma7}, for $n \in \{1,\cdots,N\}$ we obtain
\begin{multline}\label{banff}   
\int_{\R^2} \rho_h^n \log \rho_h^n\dd x + 4 \int_{\R^2}  \rho_h^n \log (1+|x|^2)\dd x \le \int_{\R^2} \rho_h^n | \log \rho_h^n|\dd x + 4 \int_{\R^2} |x|^2 \rho_h^n \dd x \\
\le  \left(\frac{16\pi\chi}{8\pi - \chi}+16\chi T\right)C_4\left( 1 + \frac1T+(\log T)_+ \right)=\Lambda(T) -16 - \frac2\ee - 2\log\pi\;.
\end{multline}
We then infer from Lemma~\ref{impro} that, for $n \in \{0,\cdots,N-1\}$
\begin{align*}
    \frac1\chi  \int_{\R^2}\left|\frac{\nabla\rho_h^{n+1}}{\rho_h^{n+1}}\right|^2\rho_h^n&\dd x
         + \|\Delta \phi_h^{n} + \rho_h^{n+1} -\alpha\phi_h^{n+1}\|_{{\rm L}^2(\R^2)}^2\\
\le&\, \frac{2}{\chi h} \left[ \int_{\R^2} \rho_h^{n} \log \rho_h^{n}\dd x - \int_{\R^2} \rho_h^{n+1} \log \rho_h^{n+1}\dd x\right] \\
&\,+ \frac{\tau}{h}  \left[ \|\nabla \phi_h^n\|_{{\rm L}^2(\R^2)}^2 + \alpha  \| \phi_h^n\|_{{\rm L}^2(\R^2)}^2 -\|\nabla \phi_h^{n+1}\|_{{\rm L}^2(\R^2)}^2 - \alpha  \| \phi_h^{n+1}\|_{{\rm L}^2(\R^2)}^2 \right]\\ &+C(\Lambda(T)) 
\,+ \alpha  \| \phi_h^{n+1}\|_{{\rm L}^2(\R^2)}^2 \;.
  \end{align*}
Summing over $n \in \{0,\cdots,N-1 \}$ gives
\begin{align*}
    \frac1\chi\sum_{n=0}^{N-1}\int_{\R^2}\left|\frac{\nabla\rho_h^{n+1}}{\rho_h^{n+1}}\right|^2&\rho_h^n\dd x + \sum_{n=0}^{N-1}\|\Delta \phi_h^{n+1} + \rho_h^{n+1} -\alpha\phi_h^{n+1}\|_{{\rm L}^2(\R^2)}^2\\
\le&\, \frac{2}{\chi h} \left[ \int_{\R^2} \rho_0 \log \rho_0\dd x - \int_{\R^2} \rho_h^{N} \log \rho_h^{N}\dd x\right]
\\
&\,+ \frac{\tau}{h}  \left[ \|\nabla \phi_0\|_{{\rm L}^2(\R^2)}^2 + \alpha  \| \phi_0\|_{{\rm L}^2(\R^2)}^2 -\|\nabla \phi_h^{N}\|_{{\rm L}^2(\R^2)}^2 - \alpha  \| \phi_h^{N}\|_{{\rm L}^2(\R^2)}^2 \right] \\ &+NC(\Lambda(T)) 
\,+ \alpha \sum_{n=0}^{N-1} \| \phi_h^{n+1}\|_{{\rm L}^2(\R^2)}^2 \;.
  \end{align*}
Therefore, using once more Lemma~\ref{lemma7} together with~\eqref{banff}, we conclude that
\begin{align*}
    \frac1\chi\int_{0}^{Nh} \int_{\R^2}\left|\frac{\nabla\rho_h(s)}{\rho_h(s)}\right|^2\rho_h(s)\dd x  \dd s &+\int_{0}^{Nh} \|\Delta \phi_h(s) + \rho_h(s) -\alpha\phi_h(s)\|_{{\rm L}^2(\R^2)}^2 \dd s\\
\le &\, \frac{2}{\chi} \left[ \int_{\R^2} \rho_0 \log \rho_0\dd x +\Lambda(T)\right]
+ \alpha N h \!\sup_{n \in [1,N]}\!\| \phi_h^{n}\|_{{\rm L}^2(\R^2)}^2\\
&\,+ \tau  \left[ \|\nabla \phi_0\|_{{\rm L}^2(\R^2)}^2 + \alpha  \| \phi_0\|_{{\rm L}^2(\R^2)}^2 \right]+N h C(\Lambda(T)) 
\\ \le&\, C(T)+\alpha T \frac{C_4\left(1+1/T + (\ln T)_+\right)}{\alpha \nu} \le C_5(T)\;,
  \end{align*}
which completes the proof.
\end{proof}

\subsection{De Giorgi interpolant and Discrete energy dissipation}

In order to obtain an energy dissipation estimate we introduce the so called De Giorgi variational interpolant 
(see for instance \cite[Section 3.2]{AGS08}). We define the De Giorgi interpolant as follows
\begin{equation*}
    \tilde u_h(t) \in  {\rm Argmin}_{u\in X}\left\{\frac{1}{2(t-(n-1)h)}d^2(u,u_h^{n-1}) +\EE(u)\right\}, \qquad t\in ((n-1)h,nh].
\end{equation*}
We can also assume that $\tilde u_h(nh)=u_h^n$ for any $n\in\NN$.
We use the notation $(\tilde\rho_h(t),\tilde\phi_h(t))=\tilde u_h(t)$.
\begin{proposition}
For every $t>0$, $(\tilde\rho_h(t),\tilde\phi_h(t))$ enjoy the same regularity properties of $(\rho^n_h,\phi^n_h)$ given by Lemma {\rm\ref{impro}}
and the following discrete energy identity holds for all $N\in\NN$ and $h>0$
\begin{equation}\label{discreteEI}
\begin{aligned}
 &\frac{1}{2\chi}\int_0^{Nh}\int_{\R^2}\left|\frac{\nabla\rho_h}{\rho_h}-\chi\nabla\phi_h \right|^2\rho_h\,\dd x\,\dd t
+ \frac{\tau}{2}\int_0^{Nh}\|\Delta\phi_h -\alpha\phi_h+\rho_h \|_{{\rm L}^2(\R^2)}^2 \,\dd t \\
&+ \frac{1}{2\chi}\int_0^{Nh}\int_{\R^2}\left|\frac{\nabla\tilde\rho_h}{\tilde\rho_h}-\chi\nabla\tilde\phi_h \right|^2\tilde\rho_h\,\dd x\,\dd t
+ \frac{\tau}{2}\int_0^{Nh}\|\Delta\tilde\phi_h -\alpha\tilde\phi_h+\tilde\rho_h \|_{{\rm L}^2(\R^2)}^2 \,\dd t  \\
& +\EE[\rho_h({Nh}),\phi_h({Nh})] = \EE[\rho_0,\phi_0].
\end{aligned}
\end{equation}
Moreover for every $T>0$ there exists a constant $C(T)$ such that
\begin{equation}\label{DGversusPC}
    d^2(\tilde u_h(t),u_h(t))\leq C(T) h, \qquad \forall t\in[0,T].
\end{equation}
\end{proposition}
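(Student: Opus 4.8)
The plan is to establish the three assertions in turn, exploiting throughout that for $t\in((n-1)h,nh]$ the De Giorgi interpolant $\tilde u_h(t)=(\tilde\rho_h(t),\tilde\phi_h(t))$ is itself a minimiser of a one-step functional of the type analysed in Section~\ref{mini}, namely $u\mapsto\frac{1}{2\tau_t}d^2(u,u_h^{n-1})+\EE[u]$ with reference point $u_h^{n-1}$ and time step $\tau_t:=t-(n-1)h\in(0,h]$.

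\emph{Regularity.} Testing the minimality of $\tilde u_h(t)$ against the competitor $u=u_h^{n-1}$ gives at once $\EE[\tilde u_h(t)]\le\EE[u_h^{n-1}]$ and $\frac{1}{2\tau_t}d^2(\tilde u_h(t),u_h^{n-1})\le\EE[u_h^{n-1}]-\EE[\tilde u_h(t)]$. Chaining this with $\FF[\rho_h^{k+1},\phi_h^{k+1}]\le\FF[\rho_h^k,\phi_h^k]$ for $k=0,\dots,n-2$ and arguing exactly as in Lemma~\ref{lemma7} and Lemma~\ref{phiover}, with the last step now $\tilde u_h(t)$ in place of $u_h^N$, one obtains uniform bounds on $\int_{\R^2}|x|^2\tilde\rho_h(t)\dd x$ and on $\int_{\R^2}\tilde\rho_h(t)|\log\tilde\rho_h(t)|\dd x$ for $t\in[0,T]$, hence $\int_{\R^2}\tilde\rho_h(t)\log\tilde\rho_h(t)\dd x+4\int_{\R^2}\tilde\rho_h(t)\log(1+|x|^2)\dd x\le\Lambda(T)$ with $\Lambda(T)$ as in Lemma~\ref{phiover} up to harmless constants, and also $\EE[\tilde u_h(t)]\ge-C(T)$ via Lemma~\ref{tintin}. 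Since $\tilde u_h(t)$ is a minimiser satisfying this $\Lambda$-hypothesis, Lemma~\ref{impro} applies verbatim and yields $\tilde\rho_h(t)\in W^{1,1}(\R^2)$, $\nabla\tilde\rho_h(t)/\tilde\rho_h(t)\in\L^2(\tilde\rho_h(t))$, $\tilde\phi_h(t)\in\HH^2(\R^2)$, while Lemma~\ref{lem:b7} applies with $h$ replaced by $\tau_t$ and gives the interpolant analogues of~\eqref{discreteW2}--\eqref{discreteL2}:
\begin{gather*}
\int_{\R^2}\Big|\frac{\nabla\tilde\rho_h(t)}{\tilde\rho_h(t)}-\chi\nabla\tilde\phi_h(t)\Big|^2\tilde\rho_h(t)\dd x=\frac{d_W^2(\tilde\rho_h(t),\rho_h^{n-1})}{\tau_t^{2}},\\
\|\Delta\tilde\phi_h(t)-\alpha\tilde\phi_h(t)+\tilde\rho_h(t)\|_{\L^2(\R^2)}^2=\frac{\tau^2\,\|\tilde\phi_h(t)-\phi_h^{n-1}\|_{\L^2(\R^2)}^2}{\tau_t^{2}}.
\end{gather*}

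\emph{Discrete energy identity.} Fix $n$ and set $g_n(\tau):=\min_{u\in X}\{\frac{1}{2\tau}d^2(u,u_h^{n-1})+\EE[u]\}$ for $\tau\in(0,h]$; by Proposition~\ref{prop:lsc} the minimum is attained, at $\tilde u_h((n-1)h+\tau)$, and $g_n$ is finite. Testing minimality against $u_h^{n-1}$ gives $d^2(\tilde u_h((n-1)h+\tau),u_h^{n-1})\le 2\tau(\EE[u_h^{n-1}]-\EE[\tilde u_h((n-1)h+\tau)])$, which stays bounded on compact subintervals of $(0,h]$, so $g_n$ is locally Lipschitz; at each point of differentiability, comparing $g_n$ with the $C^1$ function $\tau'\mapsto\frac{1}{2\tau'}d^2(\tilde u_h((n-1)h+\tau),u_h^{n-1})+\EE[\tilde u_h((n-1)h+\tau)]$, which touches it from above, yields $g_n'(\tau)=-d^2(\tilde u_h((n-1)h+\tau),u_h^{n-1})/(2\tau^2)$. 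Moreover the bound above forces $d(\tilde u_h((n-1)h+\tau),u_h^{n-1})\to0$ as $\tau\to0^+$, whence $\EE[\tilde u_h((n-1)h+\tau)]\to\EE[u_h^{n-1}]$ by lower semicontinuity and $g_n(\tau)\to\EE[u_h^{n-1}]$, $\tau g_n(\tau)\to0$. Integrating $g_n'$ and using $g_n(\tau)=\EE[\tilde u_h((n-1)h+\tau)]+\frac{1}{2\tau}d^2(\tilde u_h((n-1)h+\tau),u_h^{n-1})$ gives, for $t\in((n-1)h,nh]$,
\begin{equation*}
\EE[u_h^{n-1}]=\EE[\tilde u_h(t)]+\frac{d^2(\tilde u_h(t),u_h^{n-1})}{2(t-(n-1)h)}+\frac12\int_{(n-1)h}^{t}\frac{d^2(\tilde u_h(s),u_h^{n-1})}{(s-(n-1)h)^{2}}\dd s.
\end{equation*}
Evaluating at $t=nh$, where $\tilde u_h(nh)=u_h^n$, summing over $n=1,\dots,N$ so that the energies telescope, recalling $d^2=\frac1\chi d_W^2+\tau\|\cdot\|_{\L^2(\R^2)}^2$, and substituting~\eqref{discreteW2}--\eqref{discreteL2} together with their interpolant analogues from the first part, the term $\sum_n\frac{1}{2h}d^2(u_h^n,u_h^{n-1})$ and the term $\frac12\sum_n\int_{(n-1)h}^{nh}d^2(\tilde u_h(t),u_h^{n-1})/(t-(n-1)h)^2\dd t$ become exactly the four dissipation integrals of~\eqref{discreteEI}, which proves~\eqref{discreteEI}.

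\emph{Closeness of the interpolants, and main obstacle.} For $t\in((n-1)h,nh]$ one has $u_h(t)=u_h^n$ and $d(\tilde u_h(t),u_h(t))\le d(\tilde u_h(t),u_h^{n-1})+d(u_h^{n-1},u_h^n)$; since $\tau_t\le h$, the competitor estimate and the uniform control $\EE[u_h^{n-1}]\le\EE[\rho_0,\phi_0]$, $\EE[\tilde u_h(t)]\ge-C(T)$ from the first part give $d^2(\tilde u_h(t),u_h^{n-1})\le 2h(\EE[\rho_0,\phi_0]+C(T))$, while Lemma~\ref{lemma7} bounds each summand of $\frac1{4\chi h}\sum d_W^2(\rho_h^{k+1},\rho_h^k)+\frac{\tau}{2h}\sum\|\phi_h^{k+1}-\phi_h^k\|_{\L^2(\R^2)}^2$ by $C_4+(1+T^{-1}+(\log T)_+)$, so $d^2(u_h^{n-1},u_h^n)\le C\,h\,(1+T^{-1}+(\log T)_+)$; squaring the triangle inequality and combining yields~\eqref{DGversusPC}. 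The delicate point of the whole argument is the first part: verifying the $\Lambda$-hypothesis uniformly in $t\in[0,T]$ so that Lemmas~\ref{impro} and~\ref{lem:b7} transfer to the De Giorgi interpolant, because only once $\tilde\rho_h(t)\in W^{1,1}(\R^2)$ and $\tilde\phi_h(t)\in\HH^2(\R^2)$ are available do the Fisher-information and $\|\Delta\tilde\phi_h-\alpha\tilde\phi_h+\tilde\rho_h\|_{\L^2(\R^2)}^2$ terms in~\eqref{discreteEI} make sense and coincide with the metric quantities; once this is settled, the energy identity is the classical minimising-movement computation and the closeness estimate is routine.
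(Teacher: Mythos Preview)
Your proof is correct and follows the same strategy as the paper: transfer Lemmas~\ref{impro} and~\ref{lem:b7} to the De Giorgi interpolant (the paper simply says ``the same argument of Lemma~\ref{impro}''), invoke the one-step De Giorgi energy identity (which you derive explicitly via the touching-from-above argument, whereas the paper cites \cite[Lemma~3.2.2]{AGS08}), substitute the slope identities~\eqref{discreteW2}--\eqref{discreteL2} and their interpolant analogues, and obtain~\eqref{DGversusPC} by the triangle inequality combined with competitor estimates and Lemma~\ref{lemma7}. Your treatment is more detailed than the paper's---in particular on verifying the $\Lambda$-hypothesis for $\tilde u_h(t)$ and on the closeness estimate---but the underlying ideas coincide.
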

\begin{proof}
From \cite[Lemma~3.2.2]{AGS08} we have the one step energy identity
\begin{equation*}
\begin{aligned}
 &\frac{1}{2}\frac{d^2(u_h^n,u_h^{n-1})}{h}
+ \frac{1}{2}\int_{(n-1)h}^{nh}\frac{d^2(\tilde u(t),u_h^{n-1})}{t-(n-1)h} \dd t
 +\EE(u_h^n)= \EE(u_h^{n-1}).
\end{aligned}
\end{equation*}
Defining the function
\begin{equation*}
    G_h(t) = \frac{d(\tilde u(t),u_h^{n-1})}{t-(n-1)h}, \qquad t\in ((n-1)h,nh],
\end{equation*}
and summing from $n=1$ to $N$, we obtain
\begin{equation}\label{discreteEII}
\begin{aligned}
 \frac{1}{2}\sum_{n=1}^N h\frac{d^2(u_h^n,u_h^{n-1})}{h^2}
+ \frac{1}{2}\int_{0}^{Nh} G^2_h(t) \dd t
 +\EE(u_h^N)= \EE(u_0).
\end{aligned}
\end{equation}
The same argument of Lemma \ref{impro} shows that $(\tilde\rho_h(t),\tilde\phi_h(t))$ enjoy the same regularity properties of 
$(\rho_h^n,\phi_h^n)$ and we can obtain  
the Euler-Lagrange equation for $(\tilde\rho_h(t),\tilde\phi_h(t))$:
\begin{equation*}
\begin{aligned}
& \frac{1}{\chi} \int_{\RR^2} \zeta\cdot \left( \nabla\tilde\rho_h(t) - \chi\tilde\rho_h(t)\nabla \tilde\phi_h(t) \right) \dd x
 + \int_{\RR^2} \left( - \Delta \tilde\phi_h(t) + \alpha \tilde\phi_h(t) - \tilde\rho_h(t) \right)\ \eta \dd x \\
& =  \frac1{\chi} \frac1{{t-(n-1)h}}\int_{\RR^2}(T_{\tilde\rho_h(t)}^{\rho_h^{n-1}}-Id)\cdot \zeta \tilde\rho_h(t)\dd x\,
 + \tau \int_{\RR^2} \frac{\phi_h^{n-1} -\tilde\phi_h(t)}{{t-(n-1)h}} \ \eta \dd x ,
\end{aligned}
\end{equation*}
for every $\zeta\in \C^\infty_0(\R^2;\R^2)$ and  $\eta\in \C^\infty_0(\R^2)$.
As in Lemma \ref{lem:b7} it follows that
\begin{equation}\label{discreteW}
  \int_{\R^2}\left|\frac{\nabla\tilde\rho_h(t)}{\tilde\rho_h(t)}-\chi\nabla\tilde\phi_h(t) \right|^2\tilde\rho_h(t) \dd x =
  \frac{d_W^2(\tilde\rho_h(t),\rho_h^{n-1})}{(t-(n-1)h)^2}
\end{equation}
and
\begin{equation}\label{discreteL}
    \|\Delta\tilde\phi_h(t) -\alpha\tilde\phi_h(t)+\tilde\rho_h(t) \|_{{\rm L}^2(\R^2)}^2 =
    \tau^2 \frac{\|\tilde\phi_h(t)-\phi_h^{n-1}\|_{{\rm L}^2(\R^2)}^2}{(t-(n-1)h)^2}
\end{equation}
for $t\in ((n-1)h,nh]$.
Recalling the definition of $d$ and using the identities~\eqref{discreteW},~\eqref{discreteL},~\eqref{discreteW2}, and~\eqref{discreteL2} 
in \eqref{discreteEII} we obtain \eqref{discreteEI}.

Finally, the estimate \eqref{DGversusPC} follows from Lemma \ref{lemma7} using the same argument of \cite[Lemma~3.2.2]{AGS08}.
\end{proof}

\begin{lemma}[Time equicontinuity]\label{london}
Let $T>0$. There exist $C_6$ and $C_7$ such that for all $(t,s) \in [0,T]^2$ and $h \in (0,1)$, we get
\begin{eqnarray*}
d_W(\rho_h(t),\rho_h(s)) & \le & C_6(T) (\sqrt{|t-s|}+\sqrt{h})\;, \\
\|\phi_h(t)-\phi_h(s)\|_{{\rm L}^2(\R^2)} & \le & C_7(T) (\sqrt{|t-s|}+\sqrt{h})\;.
\end{eqnarray*}
\end{lemma}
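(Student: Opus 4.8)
The plan is to reduce the two inequalities to the summed one-step bounds already obtained in Lemma~\ref{lemma7}, using that $\rho_h$ and $\phi_h$ are piecewise constant in time together with the triangle inequality (for $d_W$, resp. for the ${\rm L}^2$ norm) and the Cauchy--Schwarz inequality.

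First I would fix $T>0$ and $h\in(0,1)$, take $(t,s)\in[0,T]^2$ with (by symmetry) $s\le t$, and choose integers $m\ge n\ge 0$ such that $t\in((m-1)h,mh]$ and $s\in((n-1)h,nh]$, so that $\rho_h(t)=\rho_h^m$, $\rho_h(s)=\rho_h^n$, $\phi_h(t)=\phi_h^m$, and $\phi_h(s)=\phi_h^n$. If $m=n$ both sides vanish, so I would assume $m>n$. Since $(m-1)h<t\le T$, one has $mh\le T+1$; hence Lemma~\ref{lemma7}, applied with final time $T+1$ and $N=m$ (discarding the remaining nonnegative terms on its left-hand side, which are nonnegative precisely because $\chi<8\pi$, $\nu>0$ and $\alpha>0$), furnishes a constant $C_\ast(T)>0$, independent of $h$, with
\[
\sum_{k=0}^{m-1} d_W^2(\rho_h^{k+1},\rho_h^{k})\le C_\ast(T)\,h \qquad\text{and}\qquad \sum_{k=0}^{m-1}\|\phi_h^{k+1}-\phi_h^{k}\|_{{\rm L}^2(\R^2)}^2\le C_\ast(T)\,h .
\]

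Then I would estimate, by the triangle inequality and Cauchy--Schwarz,
\[
d_W(\rho_h(t),\rho_h(s))\le\sum_{k=n}^{m-1}d_W(\rho_h^{k+1},\rho_h^{k})\le\sqrt{m-n}\,\Big(\sum_{k=n}^{m-1}d_W^2(\rho_h^{k+1},\rho_h^{k})\Big)^{1/2}\le\sqrt{C_\ast(T)}\,\sqrt{(m-n)h},
\]
and the identical computation with $\|\cdot\|_{{\rm L}^2(\R^2)}$ in place of $d_W$. To conclude, I would use $(m-1)h<t$ and $s\le nh$ to get $(m-n-1)h<t-s=|t-s|$, whence $(m-n)h\le|t-s|+h$ and, by subadditivity of the square root, $\sqrt{(m-n)h}\le\sqrt{|t-s|}+\sqrt{h}$; taking $C_6(T)=C_7(T)=\sqrt{C_\ast(T)}$ yields both bounds.

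I do not anticipate a genuine obstacle here. The only point requiring a little care is that $t$ may be strictly below $mh$, so Lemma~\ref{lemma7} cannot be invoked directly with final time $T$; this is handled by enlarging the horizon to $T+1$, which merely changes the constant. One should also check that the resulting constant is uniform in $h$, which it is, since the right-hand side of Lemma~\ref{lemma7} does not involve $h$.
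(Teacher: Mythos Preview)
Your proof is correct and follows essentially the same route as the paper: triangle inequality plus Cauchy--Schwarz on the telescoping sum, combined with the summed one-step bounds of Lemma~\ref{lemma7}, followed by the elementary estimate $(m-n)h\le |t-s|+h$. You are in fact slightly more careful than the paper in noting that $mh$ may exceed $T$ and handling this by enlarging the horizon to $T+1$; the paper glosses over this point.
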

%---------------------
\begin{proof}
Let $0\le s <t$ and set $N:=\left\lceil t/h\right\rceil$ and $P:=\left\lceil s/h\right\rceil$, where $\lceil a\rceil$ denotes the superior integer part of the real number $a$.
By Lemma~\ref{lemma7}, we deduce that
\begin{equation*}
\begin{aligned}
d(u_h(t),u_h(s)) =d(u_h^N,u_h^P) &\le \sum_{n=P}^{N-1} d(u_h^n,u_h^{n+1})\le \sqrt{N-P}\sqrt{\sum_{n=P}^{N-1} d^2(u_h^n,u_h^{n+1})}\\
&\le \sqrt{N-P}\,\sqrt{2h(C_4 + (\log T)_+)}\;,
\end{aligned}
\end{equation*}
which gives the time equi-continuity for $\rho_h$ and $\phi_h$
recalling the definition \eqref{def:d} of the distance $d$.
\end{proof}

\subsection{Proof of Theorems~\ref{main1} and~\ref{main2}}

\begin{proof} Let $T>0$.

\noindent $\bullet$ Convergence of $(\rho_h)_h$ and $(\phi_h)_h$:
By Lemma~\ref{lemma7}, we obtain
\begin{equation}\label{boundH1}
\sup_{t\in[0,T],h\in(0,1)}\|\phi_h(t)\|_{\HH^1(\R^2)} <+\infty.
\end{equation}
Thus $\{\phi_h(t): (t,h) \in [0,T]\times (0,1)\}$ is in a weakly compact subset of $\HH^1(\R^2)$. 
Also, Lemma~\ref{lemma7} implies that
\begin{equation}\label{momententropy}
\sup_{t\in[0,T],h\in(0,1)}
\left[
\int_{\R^2}|x|^2\rho_h(t)\dd x +\int_{\R^2}\rho_h(t)|\log\rho_h(t)|\dd x
\right] <+\infty.
\end{equation}
Hence, the set $\{\rho_h(t): (t,h) \in [0,T]\times (0,1)\}$ is tight and equi-integrable. Thus, by Dunford-Pettis theorem, this set is weakly compact in $\L^1(\R^2)$.

In addition, the equi-continuity stated in Lemma~\ref{london} guarantees that, for all $0\le s\le t\le T$,
\begin{equation*}
\limsup_{h \to 0} \|\phi_h(t)-\phi_h(s) \|_{{\rm L}^2(\R^2)}^2 \le \, C_7(T)\sqrt{t-s}\,,
\end{equation*}
and
\begin{equation*}
\limsup_{h \to 0} d_W(\rho_h(t),\rho_h(s)) \le \, C_6(T)\sqrt{t-s}\,.
\end{equation*}
We then infer from a variant of the Ascoli-Arzel\`a theorem \cite[Proposition~3.3.1]{AGS08} that
there exist a monotone sequence $(h_j)_j$ of positive numbers, $h_j\to 0$, and curves
$$
\phi\in \CC^{1/2}([0,T];\L^2(\R^2)), \qquad \rho\in \CC^{1/2}([0,T];\PP_2(\R^2)),
$$
such that
\begin{equation*}%\label{cvphi1}
\phi_{h_j}(t) \rightharpoonup \phi(t)\quad\mbox{ weakly in $\HH^1(\R^2)$ for all $t\in [0,T]$}
\end{equation*}
and
\begin{equation*}%\label{cvrho1}
\rho_{h_j}(t) \rightharpoonup \rho(t)\quad\mbox{ weakly in $\L^1(\R^2)$ for all $t\in [0,T]$.}
\end{equation*}
Passing to the limit as $h_j\to 0$ in \eqref{momententropy} and in \eqref{boundH1}, by semicontinuity we obtain the bounds in \eqref{ME}.

\noindent$\bullet$
Moreover, Lemma~\ref{phiover} implies that $(\phi_h)_h$ is bounded in $\L^2(0,T;\HH^2(\R^2))$.
We can assume without lose of generality,
\begin{equation}
  \label{cvphi3}
  \phi_{h_j} \rightharpoonup \phi\quad\mbox{weakly in $\L^2(0,T;\HH^2(\R^2))$}
\end{equation}
and
$$
\phi_{h_j} \longrightarrow \phi\quad\mbox{in $\L^2(0,T;\L^2_{\rm loc}(\R^2))$}.
$$
By standard interpolation results we obtain that
\begin{equation}
\phi_{h_j} \longrightarrow \phi\quad\mbox{in $\L^2(0,T;\HH^1_{\rm loc}(\R^2))$.} \label{cvphi2}
\end{equation}

\noindent$\bullet$
Lemma~\ref{phiover} also implies that $(\|{\nabla\rho_{h_j}}/{\rho_{h_j}}\|_{\L^2(\rho_{h_j})})_j$ is bounded in $\L^2(0,T)$. Then, from the inequality \eqref{BHN} and the second bound in~\eqref{momententropy}, we obtain that $({\rho_{h_j}})_j$ is bounded in $\L^2((0,T)\times \R^2)$.
We deduce that, after extracting a subsequence,
\begin{equation}\label{luego}
\rho_{h_j} \rightharpoonup \rho\quad\mbox{ weakly in $\L^2(0,T;\R^2)$.}
\end{equation}

In order to pass to the limit in $\nabla \rho_{h_j}$, we use Proposition~\ref{prop:vf}
with the measures $\dd\mu_j=\rho_{h_j} \!\dd x \!\dd t/T$ in the space $(0,T)\times\R^2$
and the vector fields $v_j={\nabla\rho_{h_j}}/{\rho_{h_j}}$.
By Lemma~\ref{phiover}, we have 
$$
\sup_{j}\int_0^T\int_{\R^2} |v_j|^2 \rho_{h_j} \dd x \dd t <+\infty \;.
$$
Setting $\!\dd\mu= \rho \!\dd x \!\dd t/T$, there exists $v\in \L^2((0,T)\times\R^2,\mu;\R^2)$
(consequently $v \rho \in \L^1((0,T)\times \R^2)$)
such that, up to a subsequence,
\begin{equation*}
    \int_0^T\int_{\R^2} \varphi\cdot v_j \, \rho_{h_j} \dd x \dd t \to \int_0^T\int_{\R^2} \varphi\cdot v \, \rho \dd x \dd t,
\end{equation*}
for every $\varphi\in \C^\infty_0((0,T)\times \R^2)$. 
Since $v \rho \in \L^1((0,T)\times \R^2)$, we can deduce
\begin{align*}
    \int_0^T\int_{\R^2} \varphi\cdot v_j \,\rho_{h_j} \dd x \dd t \,& =
     \int_0^T\int_{\R^2} \varphi\cdot \nabla\rho_{h_j} \dd x \dd t \\ &=
     - \int_0^T\int_{\R^2} (\nabla\cdot\varphi) \,\rho_{h_j} \dd x \dd t
     \to - \int_0^T\int_{\R^2} (\nabla\cdot\varphi) \,\rho \dd x \dd t\,.
     \end{align*}
Consequently,  $v \rho = \nabla\rho $ and $\rho\in\L^1(0,T;W^{1,1}(\R^2))$.
Finally, the lower semicontinuity property \eqref{VFlsc} yields~\eqref{B}.

\noindent$\bullet$ Identifying the limit:
Writing the  Euler-Lagrange equation, see Lemma~\ref{lem:b7}, with a time dependent test function, we obtain a time discrete formulation of the system~\eqref{num2}. 
Thanks to the convergences \eqref{cvphi3}-\eqref{cvphi2} for $(\phi_h)_h$, the convergence~\eqref{luego} for $(\rho_h)_h$
and the previous step for $(\nabla\rho_h)_h$, we can pass to the limit in this  time discrete formulation
and conclude that $(\rho,\phi)$ is a weak solution to the Keller-Segel system~\eqref{num2}.

\noindent $\bullet$ Energy inequality:
We first recall that De Giorgi interpolant converges to the same limit as the piecewise constant interpolant, see \eqref{DGversusPC}. This fact together with the above compactness properties, Proposition~\ref{prop:vf},
and the lower semicontinuity of $\EE$, we can pass to the limit in the discrete energy identity~\eqref{discreteEI} obtaining the energy inequality \eqref{energy inequality}.
\end{proof}

%%%%%%%%%%%%%%%%%%%%%%%%%%%%%%%%%%%%%%%%%%%%%%%%%%%%%%%%%%
%%% Appendix %%%%%%%%%%%%%%%%%%%%%%%%%%%%%%%%%%%%%%%%%%%%%
%%%%%%%%%%%%%%%%%%%%%%%%%%%%%%%%%%%%%%%%%%%%%%%%%%%%%%%%%%
 \appendix
\section{Biler-Hebisch-Nadzieja inequality}
A similar inequality is proved in~\cite{BHN92}.
%--------------------------------
\begin{lemma}[Biler-Hebisch-Nadzieja inequality]%\label{eq:BHN}
  Given $\eps >0$, there is $L_\eps >0$ such that for all non-negative $f \in \HH^1(\R^2)$ satisfying $f^2 \log  f \in \LL^1(\R^2)$
  \begin{equation}\label{eqaBHN}
    \|f\|_{{\rm L}^4(\R^2)}^4 \le \eps \|\nabla f\|_{{\rm L}^2(\R^2)}^2 \|f^2 \log  f\|_{{\rm L}^1(\R^2)}  + L_\eps \|f\|_{{\rm L}^2(\R^2)}^2 \;.
  \end{equation}
\end{lemma}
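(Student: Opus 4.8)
The plan is to combine the Gagliardo-Nirenberg inequality in $\R^2$ with a logarithmic refinement that allows us to trade a small amount of the gradient norm against the weighted entropy $\|f^2\log f\|_{{\rm L}^1(\R^2)}$. The starting point is the two-dimensional Gagliardo-Nirenberg-Sobolev inequality $\|g\|_{{\rm L}^2(\R^2)}^2 \le C_{GN}\|\nabla g\|_{{\rm L}^1(\R^2)}\|g\|_{{\rm L}^2(\R^2)}$ applied to $g = f^2$ (or, equivalently, $\|f\|_{{\rm L}^4(\R^2)}^4 \le C\|f\nabla f\|_{{\rm L}^1(\R^2)}\|f\|_{{\rm L}^2(\R^2)}$), which by Cauchy-Schwarz gives the classical bound $\|f\|_{{\rm L}^4(\R^2)}^4 \le C\|\nabla f\|_{{\rm L}^2(\R^2)}\|f\|_{{\rm L}^2(\R^2)}^3$. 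The point is that this is \emph{not} scale-invariant in the way we need; what we want is an estimate where the coefficient of $\|\nabla f\|_{{\rm L}^2(\R^2)}^2$ can be made arbitrarily small at the expense of a logarithmic factor, which is exactly the role played by $\|f^2\log f\|_{{\rm L}^1(\R^2)}$.

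The key step is a dyadic decomposition of $f$ according to its size, or equivalently a scaling argument: for $\lambda>0$ replace $f$ by $f_\lambda(x) := \lambda f(\lambda x)$, which preserves the ${\rm L}^2$ norm, multiplies $\|\nabla f\|_{{\rm L}^2(\R^2)}^2$ by $\lambda^2$, multiplies $\|f\|_{{\rm L}^4(\R^2)}^4$ by $\lambda^2$, and shifts $\|f^2\log f\|_{{\rm L}^1(\R^2)}$ by $\log\lambda\,\|f\|_{{\rm L}^2(\R^2)}^2$ plus $\|f^2\log f\|_{{\rm L}^1(\R^2)}$. Inserting $f_\lambda$ into the classical bound $\|f\|_{{\rm L}^4(\R^2)}^4 \le C\|\nabla f\|_{{\rm L}^2(\R^2)}\|f\|_{{\rm L}^2(\R^2)}^3$ and applying Young's inequality $ab \le \tfrac{\delta}{2}a^2 + \tfrac{1}{2\delta}b^2$ to split $\|\nabla f_\lambda\|_{{\rm L}^2(\R^2)}\|f_\lambda\|_{{\rm L}^2(\R^2)}^3$ produces, after dividing out the common factor $\lambda^2$, an inequality of the form
\begin{equation*}
\|f\|_{{\rm L}^4(\R^2)}^4 \le \frac{\delta}{2}\|\nabla f\|_{{\rm L}^2(\R^2)}^2 + \frac{C^2}{2\delta\lambda^2}\|f\|_{{\rm L}^2(\R^2)}^6.
\end{equation*}
Optimising over $\lambda$ is not quite enough by itself; instead one chooses $\lambda$ depending on $f$ so that $\log\lambda$ is comparable to $\|f^2\log f\|_{{\rm L}^1(\R^2)}/\|f\|_{{\rm L}^2(\R^2)}^2$ and then controls the resulting exponential term. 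A cleaner route, which I would actually follow, is the one in \cite{BHN92}: split $\R^2$ into the region $\{f \le M\}$ and $\{f > M\}$ for a threshold $M>1$ to be chosen, bound $\int_{\{f\le M\}} f^4 \le M^2\|f\|_{{\rm L}^2(\R^2)}^2$ trivially, and on $\{f>M\}$ use that $f^2 \le f^2\log f/\log M$ together with the Gagliardo-Nirenberg inequality applied to $(f-M)_+$ to get $\int_{\{f>M\}} f^4 \le \tfrac{C}{\log M}\|\nabla f\|_{{\rm L}^2(\R^2)}^2\|f^2\log f\|_{{\rm L}^1(\R^2)} + (\text{lower order})$. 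Choosing $M$ large so that $C/\log M \le \eps$ then yields \eqref{eqaBHN} with $L_\eps$ essentially $M(\eps)^2$ plus controlled corrections.

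The main obstacle is handling the lower-order terms carefully: the cross terms generated by writing $f = (f-M)_+ + \min(f,M)$ and by the fact that $f^2\log f$ is not sign-definite (it is negative on $\{f<1\}$), so one must be attentive that $\|f^2\log f\|_{{\rm L}^1(\R^2)}$ with absolute value inside controls what is needed, and that the contribution of $\{f<1\}$ to $\int f^4$ is bounded by $\|f\|_{{\rm L}^2(\R^2)}^2$. These are all absorbed into the $L_\eps\|f\|_{{\rm L}^2(\R^2)}^2$ term, but keeping the bookkeeping clean — especially the dependence of $L_\eps$ on $\eps$ through the threshold $M$ — is where the real work lies; everything else is a routine application of Gagliardo-Nirenberg and Young's inequality.
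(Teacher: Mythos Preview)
Your proposal is correct and follows essentially the same approach as the paper: a level-set splitting at a large threshold $M$, a trivial bound $\int_{\{f\le M\}}f^4\le M^2\|f\|_{{\rm L}^2(\R^2)}^2$ on the low part, and Gagliardo--Nirenberg on the high part combined with $f^2\le f^2\log f/\log M$ on $\{f>M\}$, then choosing $M$ so that $C/\log M\le\eps$. The paper differs only in using a Lipschitz cutoff $\Theta_N$ (vanishing below $N$, equal to the identity above $2N$) instead of $(f-M)_+$, which keeps $|\Theta_N'|\le 2$ and makes the gradient bound immediate; your initial scaling detour is unnecessary and you rightly abandon it.
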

%--------------------------------
\begin{proof}
  For $N>1$ define
  \begin{equation*}
  \Theta_N(s):=  \left\{
      \begin{array}{ll}
        0&\quad\mbox{if $s<N$}\\
        2(s-N)&\quad\mbox{if $N \le s \le 2N$}\\
        s&\quad\mbox{if $s>N$}
      \end{array}
\right.
  \end{equation*}
By Gagliardo-Nirenberg's inequality
\begin{eqnarray*}
  \|f\|_{{\rm L}^4(\R^2)}^4 &=&  \|f - \Theta_N (f) + \Theta_N (f)\|_{{\rm L}^4(\R^2)}^4 \le C \|\Theta_N (f) \|_{{\rm L}42(\R^2)}^4+C \|f - \Theta_N (f) \|_{{\rm L}^4(\R^2)}^4  \\
&\le& C\|\nabla\Theta_N (f)\|_{{\rm L}^2(\R^2)}^2\|\Theta_N (f)\|_{{\rm L}^2(\R^2)}^2+C\int_{\{f<2N\}}f^4 \dd x \\
&\le& C \|\nabla f\|_{{\rm L}^2(\R^2)}^2  \int_{\{f\ge N\}}f^2 \dd x+4CN^2 \int_{\{f<2N\}}f^2 \dd x\\
&\le& \frac{C}{\log N}\|\nabla f\|_{{\rm L}^2(\R^2)}^2 \|f^2 \log  f\|_{{\rm L}^1(\R^2)}+CN^2 \|f\|_{{\rm L}^2(\R^2)}^2\;,
\end{eqnarray*}
hence~\eqref{eqaBHN} by choosing appropriately $N$ in terms of $\eps$.
\end{proof}

\begin{corollary}
 For any $\eps >0$, there exists $L_\eps >0$ such that
  \begin{equation}\label{BHN}
    \|\rho\|_{{\rm L}^2(\R^2)}^2 \le \eps \left\|\frac{\nabla\rho}{\rho}\right\|_{\LL^2(\rho)}^2 \|\rho\log\rho\|_{{\rm L}^1(\R^2)}  + L_\eps \|\rho\|_{{\rm L}^1(\R^2)} \;
  \end{equation}
  for all $\rho \in \LL^1_+(\R^2)$ such that $\rho\log\rho \in \LL^1(\R^2)$
 and ${\nabla\rho}/{\rho}\in\LL^2(\R^2,\rho;\R^2)$.
\end{corollary}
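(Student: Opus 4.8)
The plan is to obtain the corollary from the preceding Lemma by the substitution $f=\sqrt\rho$. With this choice one has the elementary identities
\[
\|f\|_{\L^4(\R^2)}^4=\|\rho\|_{\L^2(\R^2)}^2,\qquad \|f\|_{\L^2(\R^2)}^2=\|\rho\|_{\L^1(\R^2)},
\]
together with $|\nabla f|^2=\dfrac{|\nabla\rho|^2}{4\rho}=\dfrac14\Big|\dfrac{\nabla\rho}{\rho}\Big|^2\rho$, hence $\|\nabla f\|_{\L^2(\R^2)}^2=\dfrac14\big\|\nabla\rho/\rho\big\|_{\L^2(\rho)}^2$, and $f^2\log f=\dfrac12\,\rho\log\rho$, hence $\|f^2\log f\|_{\L^1(\R^2)}=\dfrac12\|\rho\log\rho\|_{\L^1(\R^2)}$. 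Inserting these into~\eqref{eqaBHN}, applied with the parameter $8\eps$ in place of $\eps$, gives
\[
\|\rho\|_{\L^2(\R^2)}^2\le 8\eps\cdot\tfrac14\,\Big\|\tfrac{\nabla\rho}{\rho}\Big\|_{\L^2(\rho)}^2\cdot\tfrac12\|\rho\log\rho\|_{\L^1(\R^2)}+L_{8\eps}\|\rho\|_{\L^1(\R^2)},
\]
which is exactly~\eqref{BHN} upon setting $L_\eps:=L_{8\eps}$.

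The only point requiring care is to check that $f=\sqrt\rho$ is an admissible test function in the Lemma, i.e.\ that $f\in\HH^1(\R^2)$ and $f^2\log f\in\L^1(\R^2)$. The latter is immediate from the hypothesis $\rho\log\rho\in\L^1(\R^2)$. For the former, note first that by Cauchy--Schwarz and $\nabla\rho/\rho\in\L^2(\R^2,\rho;\R^2)$ one has, for every compact $K\subset\R^2$, $\int_K|\nabla\rho|\dd x\le\big(\int_K|\nabla\rho/\rho|^2\rho\dd x\big)^{1/2}\big(\int_K\rho\dd x\big)^{1/2}<\infty$, so $\rho\in W^{1,1}_{\rm loc}(\R^2)$. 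The chain rule for Sobolev functions then yields $\sqrt\rho\in W^{1,1}_{\rm loc}(\R^2)$ with $\nabla\sqrt\rho=\nabla\rho/(2\sqrt\rho)$ a.e.\ on $\{\rho>0\}$ and $\nabla\sqrt\rho=0$ a.e.\ on $\{\rho=0\}$ (using that $\nabla\rho=0$ a.e.\ on $\{\rho=0\}$, consistently with the convention recalled after Theorem~\ref{main2}); therefore $\|\nabla\sqrt\rho\|_{\L^2(\R^2)}^2=\tfrac14\|\nabla\rho/\rho\|_{\L^2(\rho)}^2<\infty$, and $\sqrt\rho\in\L^2(\R^2)$ since $\rho\in\L^1(\R^2)$, so $\sqrt\rho\in\HH^1(\R^2)$.

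I expect this regularity verification to be the only genuine obstacle; everything else is the algebraic bookkeeping of constants described above. Should one wish to avoid invoking the chain rule directly, an equivalent route is to apply~\eqref{eqaBHN} to the regularised functions $\sqrt{\rho_k}$ with $\rho_k:=\min\{\rho,k\}$ (or to $\sqrt{\rho+\delta}$ multiplied by a plateau cut-off) and pass to the limit via monotone and dominated convergence, again using the locality of the gradient on $\{\rho=0\}$.
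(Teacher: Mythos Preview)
Your proof is correct and is exactly the intended derivation: the paper states the corollary without proof, and the substitution $f=\sqrt\rho$ into the preceding lemma is the natural (and implicitly expected) route. Your regularity check that $\sqrt\rho\in\HH^1(\R^2)$ via the chain rule is the only nontrivial point, and you handle it properly.
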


\section{A Carleman type estimate}
%---------------------------
\begin{lemma}[Carleman Estimate]%\label{carleman}
  Let $\rho \in \PP(\RR^2)$ be such that  $\int_{\RR^2} \rho |\log \rho|\dd x $ and $\int_{\RR^2}\rho \log H \dd x$ are finite then
  \begin{equation}\label{eq:carleman}
    \int_{\RR^2} \rho |\log \rho|\dd x \le \int_{\RR^2} \rho \log  \rho\dd x  +\frac{2}{\ee} -2 \int_{\RR^2}\rho \log H\dd x\;.
  \end{equation}
\end{lemma}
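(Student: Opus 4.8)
The plan is to reduce the inequality \eqref{eq:carleman} to an elementary pointwise estimate comparing $\rho$ with the weight $H$. The first step is to split the left-hand side according to the sign of $\log\rho$: since $|\log\rho|=\log\rho$ on $\{\rho\ge 1\}$ and $|\log\rho|=-\log\rho$ on $\{\rho<1\}$, one has the identity
$$
\int_{\RR^2}\rho|\log\rho|\dd x=\int_{\RR^2}\rho\log\rho\dd x+2\int_{\{\rho<1\}}\rho\log\frac1\rho\dd x ,
$$
so that it suffices to establish $\int_{\{\rho<1\}}\rho\log(1/\rho)\dd x\le \frac1\ee-\int_{\RR^2}\rho\log H\dd x$. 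The finiteness assumptions on $\int_{\RR^2}\rho|\log\rho|\dd x$ and $\int_{\RR^2}\rho\log H\dd x$, together with the elementary observation that $H\le 1/\pi<1$ (hence $\log H<0$ and $\rho\log(1/H)\ge 0$ pointwise), make all the quantities well defined and justify this manipulation.

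The second step is a pointwise bound. On $\RR^2$ I would write $\rho\log(1/\rho)=\rho\log(H/\rho)+\rho\log(1/H)$ and control the first summand using the fact that $t\mapsto t\log t$ attains its minimum $-1/\ee$ on $[0,\infty)$ at $t=1/\ee$. Applying this with $t=\rho/H$ gives $(\rho/H)\log(\rho/H)\ge -1/\ee$, whence
$$
\rho\log\frac H\rho=-H\,\frac\rho H\log\frac\rho H\le\frac H\ee\qquad\text{on }\RR^2 ,
$$
and therefore $\rho\log(1/\rho)\le H/\ee+\rho\log(1/H)$ everywhere.

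The final step is to integrate this bound over $\{\rho<1\}$. Since $H$ is a probability density, $\int_{\RR^2}H\dd x=1$ (a one-line computation in polar coordinates), one has $\int_{\{\rho<1\}}H\dd x\le 1$; and since $\rho\log(1/H)\ge 0$, one may enlarge the domain of integration of that term to all of $\RR^2$, giving $\int_{\{\rho<1\}}\rho\log(1/H)\dd x\le-\int_{\RR^2}\rho\log H\dd x$. Adding the two contributions produces $\int_{\{\rho<1\}}\rho\log(1/\rho)\dd x\le \frac1\ee-\int_{\RR^2}\rho\log H\dd x$, and inserting this into the first display yields \eqref{eq:carleman}. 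This argument presents no genuine difficulty; the only points needing attention are the bookkeeping of signs, so that one never forms a difference of two infinite integrals, and the use of the specific weight $H$, for which both $\int_{\RR^2}H\dd x=1$ and the finiteness of $\int_{\RR^2}\rho\log H\dd x$ are at our disposal.
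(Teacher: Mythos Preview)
Your proof is correct and follows the same overall decomposition as the paper: split $\rho|\log\rho|$ according to the sign of $\log\rho$, reduce to bounding the negative part $\int_{\{\rho<1\}}\rho\log(1/\rho)$, and compare $\rho$ with the reference density $H$. The one genuine difference lies in how the term $\int(\bar\rho/H)\log(\bar\rho/H)\,H\dd x$ (with $\bar\rho=\rho\un_{(0,1)}(\rho)$) is controlled. The paper invokes Jensen's inequality for the convex function $s\mapsto s\log s$ against the probability measure $H\dd x$, obtaining $\int(\bar\rho/H)\log(\bar\rho/H)\,H\dd x\ge \|\bar\rho\|_1\log\|\bar\rho\|_1\ge -1/\ee$. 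You instead use the purely pointwise bound $t\log t\ge -1/\ee$ with $t=\rho/H$, which after multiplying by $H$ and integrating (using $\int H=1$) gives the same constant. Your route is slightly more elementary, since it avoids Jensen altogether; the paper's route yields a marginally sharper intermediate estimate (in terms of $\|\bar\rho\|_1$) that is, however, not exploited. Either way the final inequality \eqref{eq:carleman} comes out identically, and your handling of the sign bookkeeping and the enlargement of the domain of integration for $\rho\log(1/H)\ge 0$ is clean.
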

%---------------------------
\begin{proof}
Set $\bar \rho =\rho\un_{(0,1)}(\rho)$,
\begin{equation*}
  \begin{aligned}
  \int_{\RR^2} \rho |\log \rho|\dd x &= -  \int_{\RR^2} \bar \rho \log \bar \rho\dd x + \int_{\{\rho >1\}} \rho \log  \rho\dd x\\
&= \int_{\RR^2} \rho \log  \rho\dd x - 2 \int_{\RR^2}\bar \rho \log H\dd x -2 \int_{\RR^2}\frac{\bar \rho}{H} \log \left(\frac{\bar \rho}{H}\right)H\dd x
  \end{aligned}
\end{equation*}
Since $\|H\|_1=1$ it follows from Jensen's inequality that
\begin{equation*}
  \begin{aligned}
\int_{\RR^2} \rho |\log \rho|\dd x &\le\int_{\RR^2} \rho \log  \rho\dd x  + 2 \log \pi +4 \int_{\RR^2}\bar \rho \log(1+|x|^2)\dd x\\
&\quad-2 \left(\int_{\RR^2}\frac{\bar \rho}{H} H\dd x \right)\log \left(\int_{\RR^2}\frac{\bar \rho}{H}H\dd x\right)\\
&\le \int_{\RR^2} \rho \log  \rho\dd x  + 2 \log \pi +\frac{2}{\ee} +4 \int_{\RR^2}\bar \rho \log(1+|x|^2)\dd x\;.
  \end{aligned}
\end{equation*}
The desired result comes directly from the definition of $H$ since $\bar \rho \le \rho$.
\end{proof}

\section{Compactness of vector fields}
We recall the following result, see \cite[Theorem 5.4.4]{AGS08}.
\begin{proposition}\label{prop:vf}
Let $\mathcal U$ be an open set of $\R^K$. If $(\mu_n)_n$ is a sequence of probability measures in $\mathcal U$ narrowly converging to $\mu$ and
$(v_n)_n$ is a sequence of vector fields in $\L^2(\mathcal U,\mu_n;\R^K)$
satisfying $$\sup_{n}\|v_n\|_{\L^2(\mathcal U,\mu_n;\R^K)}<+\infty,$$
then there exists a vector field $v\in \L^2(\mathcal U,\mu;\R^K)$
such that
\begin{equation*}%\label{VFwc}
\lim_{n\to\infty}\int_{\mathcal U} \varphi \cdot v_n \dd\mu_n = \int_{\mathcal U} \varphi \cdot v \dd\mu \qquad \varphi\in \C^\infty_0(\mathcal U,\R^K)
\end{equation*}
and
\begin{equation}\label{VFlsc}
\liminf_{n\to\infty} \|v_n\|_{\L^2(\mathcal U,\mu_n;\R^K)} \geq \|v\|_{\L^2(\mathcal U,\mu;\R^K)} .
\end{equation}
\end{proposition}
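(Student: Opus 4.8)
The plan is to lift each vector field to its graph, thereby turning the (not weakly continuous) pairing $\int \varphi\cdot v_n\,\dd\mu_n$ into an integration against a sequence of probability measures on the product space $\mathcal U\times\R^K$ to which narrow compactness applies, and then to recover $v$ by disintegration.

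First I would set $\lambda_n := (\id,v_n)_\#\mu_n$, a probability measure on $\mathcal U\times\R^K$, so that $\int_{\mathcal U\times\R^K} g(x,y)\,\dd\lambda_n = \int_{\mathcal U} g(x,v_n(x))\,\dd\mu_n(x)$ for every bounded Borel $g$; in particular $\int|y|^2\,\dd\lambda_n = \|v_n\|_{\L^2(\mathcal U,\mu_n;\R^K)}^2 \le C$ by hypothesis. The first marginal of $\lambda_n$ is $\mu_n$, which is tight because it converges narrowly, while the second marginal has second moments bounded by $C$, hence is tight as well; therefore $(\lambda_n)_n$ is tight on $\mathcal U\times\R^K$, and up to a subsequence $\lambda_n\rightharpoonup\lambda$ narrowly for some probability measure $\lambda$ on $\mathcal U\times\R^K$. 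Testing against functions depending only on $x$ shows that the first marginal of $\lambda$ is $\mu$, and lower semicontinuity of the second moment under narrow convergence gives $\int|y|^2\,\dd\lambda \le \liminf_n\int|y|^2\,\dd\lambda_n$.

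Next I would disintegrate $\lambda$ with respect to its first marginal, writing $\lambda=\int_{\mathcal U}\lambda_x\,\dd\mu(x)$ with $x\mapsto\lambda_x\in\PP(\R^K)$ a $\mu$-measurable family, and define $v(x):=\int_{\R^K} y\,\dd\lambda_x(y)$. Since $\int\!\int|y|\,\dd\lambda_x\,\dd\mu=\int|y|\,\dd\lambda<\infty$, this $v$ is finite $\mu$-a.e. and $\mu$-measurable, and Jensen's inequality yields $\int_{\mathcal U}|v|^2\,\dd\mu\le\int|y|^2\,\dd\lambda<\infty$, so $v\in\L^2(\mathcal U,\mu;\R^K)$. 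Combining this with the previous bound already delivers \eqref{VFlsc}, since $\|v\|_{\L^2(\mathcal U,\mu;\R^K)}^2\le\int|y|^2\,\dd\lambda\le\liminf_n\|v_n\|_{\L^2(\mathcal U,\mu_n;\R^K)}^2$.

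It then remains to prove, for $\varphi\in\C^\infty_0(\mathcal U;\R^K)$, that $\int\varphi\cdot v_n\,\dd\mu_n=\int_{\mathcal U\times\R^K}\varphi(x)\cdot y\,\dd\lambda_n$ converges to $\int_{\mathcal U\times\R^K}\varphi(x)\cdot y\,\dd\lambda=\int_{\mathcal U}\varphi\cdot v\,\dd\mu$. This is the one genuinely delicate point, because the integrand $(x,y)\mapsto\varphi(x)\cdot y$ is unbounded in $y$, so narrow convergence does not apply directly. I would handle it by truncation: the uniform second-moment bound forces the uniform integrability $\sup_n\int_{\{|y|>R\}}|y|\,\dd\lambda_n\le C/R$, and the same estimate holds for $\lambda$; replacing $y$ by its truncation at level $R$ yields a bounded continuous integrand to which narrow convergence of $(\lambda_n)_n$ applies, and letting $R\to\infty$ uniformly in $n$ closes the argument. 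The hard part is therefore precisely this passage to the limit against an unbounded test integrand; the remaining ingredients — tightness on the product space, disintegration, and Jensen — are routine and, as noted, also produce the semicontinuity statement for free. (An alternative route avoiding graph measures is to work directly with the vector measures $v_n\mu_n$ and the jointly convex, weak-$*$ lower semicontinuous functional $(\mu,\nu)\mapsto\sup\{\int\xi\cdot\dd\nu-\frac{1}{4}\int|\xi|^2\,\dd\mu:\xi\in\C_c(\mathcal U;\R^K)\}$, whose finiteness encodes both $\nu\ll\mu$ and $\dd\nu/\dd\mu\in\L^2(\mu)$.)
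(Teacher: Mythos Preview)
The paper does not prove this proposition at all: it is stated in Appendix~C with the single sentence ``We recall the following result, see \cite[Theorem~5.4.4]{AGS08}.'' Your argument is correct and is precisely the one given in the cited reference: push forward to the graph measure $\lambda_n=(\id,v_n)_\#\mu_n$, use the uniform second-moment bound for tightness and extraction of a narrow limit $\lambda$, disintegrate and take barycentres to define $v$, get \eqref{VFlsc} from Jensen, and obtain the weak convergence by a truncation/uniform-integrability argument on the unbounded integrand $\varphi(x)\cdot y$. The only point worth flagging is that, as in \cite{AGS08}, the conclusion holds \emph{up to a subsequence}; the paper's statement writes $\lim_{n\to\infty}$ but uses the result exactly this way (``up to a subsequence'') in the proofs of Lemma~\ref{impro} and Theorems~\ref{main1}--\ref{main2}.
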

%%%%%%%%%%%%%%%%%%%%%%%%%%%%%%%%%%%%%%%%%%%%%%%%%%%%%%%%%%
%%% Ackonwledgment %%%%%%%%%%%%%%%%%%%%%%%%%%%%%%%%%%%%%%%
%%%%%%%%%%%%%%%%%%%%%%%%%%%%%%%%%%%%%%%%%%%%%%%%%%%%%%%%%%

\bigskip
\noindent{\bf Acknowledgement \/}
AB and MK acknowledge support from ECOS--C11E07 project ``Functional inequalities, asymptotics and dynamics of fronts''.

JAC acknowledges support from projects MTM2011-27739-C04-02 (Feder), 2009-SGR-345 from Ag\`encia de Gesti\'o d'Ajuts Universitaris i de Recerca-Generalitat  de Catalunya, the Royal Society through a Wolfson Research Merit Award, and the Engineering and Physical Sciences Research Council (UK) grant number EP/K008404/1.

SL acknowledges support from a MIUR-PRIN 2010-2011 grant for the project Calculus of Variations. 
SL acknowledges support of the GNAMPA project 'EQUAZIONI DI EVOLUZIONE CON TERMINI NON LOCALI' of the Istituto Nazionale di Alta Matematica (INdAM).

AB, JAC, PL, SL  acknowledges the Centro di Ciencias Pedro Pasqual de Benasque where this work was partially done.

DK is partially supported by NSF DMS 0806703, DMS 0635983 and OISE 0967140. 

MK is partially supported by Chilean research grants Fondecyt 1130126,  Fondo Basal CMM-Chile, and part of this work was done during his stay in the Universit\'e Toulouse 1 Capitole.

%%%%%%%%%%%%%%%%%%%%%%%%%%%%%%%%%%%%%%%%%%%%%%%%%%%%%%%%%%
%%% Bibliography %%%%%%%%%%%%%%%%%%%%%%%%%%%%%%%%%%%%%%%%%
%%%%%%%%%%%%%%%%%%%%%%%%%%%%%%%%%%%%%%%%%%%%%%%%%%%%%%%%%%
\bibliographystyle{siam}
\bibliography{biblio}
\end{document}